\newcommand*{\addFileDependency}[1]{
  \typeout{(#1)}
  \@addtofilelist{#1}
  \IfFileExists{#1}{}{\typeout{No file #1.}}
}
\newcommand*{\myexternaldocument}[2]{%
    \externaldocument{#1}[#2]
    \addFileDependency{#1.tex}%
    \addFileDependency{#1.aux}%
}
\newcommand{\customlabel}[2]{%
   \protected@write \@auxout {}{\string \newlabel {#1}{{#2}{\thepage}{#2}{#1}{}} }%
   \hypertarget{#1}{#2}
}
\newcommand{\subjclass}[2][1991]{%
  \let\@oldtitle\@title%
  \gdef\@title{\@oldtitle\footnotetext{#1 \emph{Mathematics subject classification.} #2}}%
}
\title{The Discrete Gaussian model, II.\\ Infinite-volume scaling limit at high temperature}
\author{Roland Bauerschmidt\footnote{University of Cambridge, Statistical Laboratory, DPMMS. E-mail: {\tt rb812@cam.ac.uk}.}
  \and
  Jiwoon Park\footnote{University of Cambridge, Statistical Laboratory, DPMMS. E-mail: {\tt jp711@cam.ac.uk}.}
  \and
  Pierre-Fran\c{c}ois Rodriguez\footnote{Imperial College London, Department of Mathematics. E-mail: {\tt p.rodriguez@imperial.ac.uk}.}}
\date{\vspace*{-2em}} 
\subjclass[2020]{82B20, 82B28, 60G15, 60K35}
\newcommand{\betaeff}{\beta_{\operatorname{eff}}}
\definecolor{darkmagenta}{rgb}{0.55, 0.0, 0.55}
\definecolor{magenta}{rgb}{0.85, 0.0, 0.55}
\definecolor{grey}{rgb}{0.55, 0.55, 0.55}
\newcommand{\Rem}{\operatorname{Rem}}
\newcommand{\Loc}{\operatorname{Loc}}
\newcommand{\KK}{\vec{K}}
\newcommand{\MM}{\mathfrak{M}}
\newcommand{\Eplus}{\mathbb{E}}
\newcommand{\assumpu}{(\textbf{A}_u)} 
\renewcommand{\bar}[1]{\overline{#1}}
\newcommand{\alphaLoc}{\alpha_{\Loc}}
\begin{document}
\maketitle

\begin{abstract}
  The Discrete Gaussian model is the lattice Gaussian free field conditioned to be integer-valued.
  In two dimensions, at sufficiently high temperature,
  we show that the scaling limit of the infinite-volume gradient Gibbs state with zero mean
  is a multiple of the Gaussian free field.

  This article is the second in a series on the Discrete Gaussian model,
  extending the methods of the first paper by the analysis of general external fields
  (rather than macroscopic test functions on the torus). 
  As a byproduct, we also obtain a scaling limit for mesoscopic test functions on the torus.
\end{abstract}

\setcounter{tocdepth}{1}
\tableofcontents

\section{Introduction and main results}
\label{sec:intro-dgauss2}

This is the second article in a series on the Discrete Gaussian model, which builds on the foundation
provided by the first paper \cite{dgauss1}.
The Discrete Gaussian model is the Gaussian free field conditioned to be integer-valued.
Its two-dimensional version is a model for a crystal interface (in 2+1 dimensions) undergoing a roughening transition, see \cite[Section 6]{MR1446000} for a textbook treatment.
We refer to our first paper \cite{dgauss1} for a more extensive introduction and discussion of the literature.

\subsection{Discrete Gaussian model in infinite volume}

In our first paper \cite{dgauss1}, we studied the scaling limit of the Discrete Gaussian model
for macroscopic test functions on the torus.
In the present article, we derive the scaling limit of its infinite-volume gradient Gibbs state,
as well as the scaling limit for mesoscopic test functions on the torus, which is a byproduct
of the proof of the infinite-volume result. These scaling limit results are the objects of Theorems~\ref{thm:highbeta-Z2} and~\ref{thm:highbeta-meso} below.

The infinite-volume limit of the two-dimensional Discrete Gaussian model
will be taken through weak limits with periodic boundary conditions, cf.~\eqref{eq:TDlimit},
and we permit a general finite-range interaction $J$ in the definition of the model.
To be precise, let $J \subset \Z^d \setminus \{0\}$ be finite and symmetric under reflections and lattice rotations,
and define the associated normalised range-$J$ Laplacian $\Delta_J$ by
\begin{equation}
  (\Delta_J f)(x) = \frac{1}{|J|}\sum_{y\in J} (f(x+y)-f(x)), \label{eq:Delta_J_definition-dgauss2}
\end{equation}
for $f:\Z^d\to \R$, 
where $|J|$ denotes the number of elements of $J$.
Acting on test functions having mean zero, $ (-\Delta_J)^{-1}$ has kernel
\begin{equation}
  (-\Delta_J)^{-1}(x,y)  \sim -\frac{1}{2\pi v_J^2} \log|x-y|,  \text{ as $|x-y|
\to \infty$,} \qquad \text{ where } \qquad
  v_J^2 = \frac{1}{2|J|} \sum_{x \in J} x_1^2.
\end{equation}
We now introduce the relevant finite-volume states. Let $\Lambda_N$ be a two-dimensional discrete torus of side length $L^N$ for integers $L>1,N \geq 1$, and fix an origin $0\in \Lambda_N$.
Given the above step distribution $J$,
the \emph{Discrete Gaussian model} on $\Lambda_N$ at temperature $\beta \in (0,\infty)$
has expectation, for any $F: (2\pi\Z)^{\Lambda_N} \to \R$ with $F(\sigma) = F(\sigma+c)$ for any constant $c\in 2\pi\Z$ and such that the following series converges, defined by
\begin{equation}
  \avg{F}_{J,\beta}^{\Lambda_N}
  \propto \sum_{\sigma \in \Omega^{\Lambda_N}} e^{-\frac{1}{2\beta} (\sigma,-\Delta_J\sigma)} \, F(\sigma)
  = \sum_{\sigma \in \Omega^{\Lambda_N}} e^{-\frac{1}{4\beta |J|} \sum_{x-y \in J} (\sigma_x-\sigma_y)^2} \, F(\sigma)
  \label{eq:DG_model_1_external_field}
\end{equation}
where the sum over $x-y\in J$ counts every undirected edge $\{x,y\}$ twice and
\begin{equation} \label{e:Omega-def_external_field}
  \Omega^{\Lambda_N} = \{\sigma \in (2\pi\Z)^{\Lambda_N}: \sigma_{x=0} =0 \}.
\end{equation}
Note that, as in our first paper \cite{dgauss1},
the factors of $2\pi$ in the spacing of the integers in \eqref{e:Omega-def_external_field} are convenient
(but could be absorbed by rescaling $\beta$), and,
to relate better to the Coulomb gas literature (cf.~references below),
we use $\frac 1\beta$ rather than $\beta$ to denote the inverse temperature of the Discrete Gaussian model.
Equivalent to considering $\sigma$ modulo constants,
one can consider  the
gradient field $\eta = (\eta_e)_{e \in E}$ where 
$E$ are the directed nearest-neighbour edges of $\Z^2$ and
$\eta_e = \sigma_x-\sigma_y$ when $e=(x,y)$.
Known correlation inequalities imply that, for any integer $L>1$ and any finite-range distribution  $J$,
the weak limit of
$\avg{\cdot}_{J,\beta}^{\Lambda_N}$ as $N\to\infty$ exists (modulo constants or as a gradient field),
see Appendix~\ref{app:corrineq}.
For concreteness, we define the infinite-volume limit in terms tori of side lengths $2^N$, i.e., when $\Lambda_N$ has side length $2^N$,
\begin{equation}\label{eq:TDlimit}
  \avg{\cdot}_{J,\beta}^{\Z^2}
  := \lim_{N\to\infty} \avg{\cdot}_{J,\beta}^{\Lambda_N} .
\end{equation}
This limit $\avg{\cdot}^{\Z^2}_{J,\beta}$
is a translation-invariant gradient Gibbs measure and every ergodic measure $\avg{\cdot}$ in its extremal decomposition has zero mean, i.e., $\avg{\eta_e}=0$ for all $e \in E$,
also see Appendix~\ref{app:corrineq}.
For $J=J_{\rm nn}$ the usual nearest-neighbour interaction, $\avg{\cdot}^{\Z^2}_{J,\beta}$ is \textit{the} unique ergodic gradient Gibbs measure with zero mean on account of Theorem~9.1.1 in \cite{MR2251117}.
For general $J$, such a characterisation has not been proved.

As is well-known (see refs.~below for an overview over the existing literature on the subject),
in the Discrete Gaussian model, the discreteness of the spins is responsible for a phase transition between a rough (or delocalised) high-temperature phase and an ordered (or localised) low-temperature phase.
Our results apply to large temperatures $\beta$.
In contrast, in the regime of small $\beta$, a Peierls expansion yields that the Discrete Gaussian field is localised (or `smooth'), e.g., there actually exists an (ordinary nongradient) Gibbs measure $\avg{\cdot}^{\Z^2}_{J,\beta}$ satisfying
\begin{equation}
\label{eq:DG-localized}
\avg{\sigma_x \sigma_y}_{J,\beta}^{\Z^2} -  \avg{\sigma_x }_{J,\beta}^{\Z^2} \avg{ \sigma_y}_{J,\beta}^{\Z^2} \leq Ce^{-c|x-y|}, \text{ for all $x, y$ and $\beta < c$};
\end{equation}
see also \cite{MR833220,MR3508158} for very precise results on the extremal behaviour in this regime.

\subsection{Main results}

Our main result is that the scaling limit of the Discrete Gaussian model $\avg{\cdot}^{\Z^2}_{J,\beta}$
defined above is a multiple of the
Gaussian free field on $\R^2$ when $\beta$ is large.
To state this precisely, given $f \in C_c^\infty(\R^2)$ with $\int_{\R^2} f(x) \, dx = 0$, 
let $f_\epsilon : \Z^2 \rightarrow \R$ satisfy $\sum_{x\in \Z^2} f_\epsilon(x)=0$ and, with $d=2$,
\begin{equation} \label{eq:feps_def}
  \begin{split}
   &\max_{0 \leq k \leq 2}\max_{x\in\Z^d}|(\epsilon^{-1}\nabla)^k f_\epsilon(x)| \leq C_f \epsilon^{1+d/2},
  \qquad
  \supp f_\epsilon \subset [-C_f\epsilon^{-1},C_f\epsilon^{-1}]^d,
  \\
  &\qquad\qquad\qquad\qquad\quad
   \max_{x\in\Z^d} \big|\epsilon^{-1-d/2} f_\epsilon(x)  -  f(\epsilon x)\big|  \to 0,
   \end{split}
\end{equation}
where $C_f$ is a constant
and $\nabla$ is the vector of discrete gradients on $\Z^2$, see Section~\ref{sec:notation-Z2}.
For example,
if $f = \nabla_i g$ for some $g \in C_c^\infty(\R^2)$ and $i \in \{1,2\}$ then one can take
$f_\epsilon(x) = \epsilon^{d/2}(g(\epsilon x+\epsilon e_i)-g(\epsilon x))$.
Thus the following scaling limit in particular implies that of the gradient field $\nabla \sigma$.

We use the notation $(u,v)_{\Z^2} = \sum_{x\in\Z^2} u(x)v(x)$ for $u,v:\Z^2 \to \R$ square summable,
$(f,g)_{\R^2} = \int_{\R^2} f(x)g(x)\, dx$ for $f,g:\R^2\to\R$ square integrable, and $\Delta_{\R^2} = \ddp{^2}{x_1^2} + \ddp{^2}{x_2^2}$ is the Laplacian on $\R^2$.

\begin{theorem} \label{thm:highbeta-Z2}
  Let $J \subset \Z^2 \setminus \{0\}$ be any   finite-range step distribution
  that is invariant under lattice rotations and reflections and includes the nearest-neighbour edges.
  Then there exists $\beta_0(J) \in (0,\infty)$ such that for 
  the infinite-volume Discrete Gaussian Model $\avg{\cdot}^{\Z^2}_{J,\beta}$
  at temperature $\beta \geq \beta_0(J)$,
  there is $\beta_{\rm eff}(J,\beta) =\beta+O_J(e^{-c\beta})  \in (0,\infty)$
  with a universal constant $c>0$ (independent of $J$)
  such that for any $f \in C_c^\infty(\R^2)$ with $\int f \, dx =0$
  and $f_\epsilon$ as in  \eqref{eq:feps_def}, as $\epsilon\to 0$,
  \begin{equation} \label{e:highbeta-convergence-Z2-thm}
    \log \avgb{e^{(f_\epsilon,\sigma)_{\Z^2}}}_{J,\beta}^{\Z^2}
    \to
    \frac{\beta_{\rm eff}(J,\beta)}{2 v_J^2}
    (f,(-\Delta_{\R^2})^{-1}f)_{\R^2}.
  \end{equation}
\end{theorem}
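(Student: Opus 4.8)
The plan is to obtain the infinite-volume scaling limit by first establishing a finite-volume scaling limit on tori $\Lambda_N$ of side length $L^N$ for the rescaled test function $f_\epsilon$ with $\epsilon = L^{-N}$ (this is the "mesoscopic" statement, Theorem~\ref{thm:highbeta-meso}), and then exchanging the two limits $N\to\infty$ (the infinite-volume limit) and $\epsilon\to 0$ (the scaling limit) using the correlation inequalities of Appendix~\ref{app:corrineq}. Concretely, the first paper \cite{dgauss1} already gives the Coulomb-gas/renormalisation-group machinery on the torus: writing the Discrete Gaussian partition function via the dual Coulomb gas (Poisson summation over the integer-valued field), the charge system is analysed by a finite-range decomposition of $(-\Delta_J)^{-1}$ and a sequence of renormalisation-group maps, producing after the final scale an effective Gaussian field with renormalised coupling $\beta_{\rm eff}(J,\beta) = \beta + O_J(e^{-c\beta})$. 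The new point compared with \cite{dgauss1} is that the external field $f_\epsilon$ is not a fixed macroscopic test function on the torus but a function supported on a mesoscopic box of side $O(\epsilon^{-1})$ inside $\Lambda_N$, so one must track the RG flow of the observable $e^{(f_\epsilon,\sigma)}$ through scales both below and above $\epsilon^{-1}$, showing that the contribution of the charges is negligible uniformly in the location and scale of the support of $f_\epsilon$.

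The key steps, in order, would be: (i) Set up the torus model $\avg{e^{(f_\epsilon,\sigma)}}^{\Lambda_N}_{J,\beta}$ and apply Poisson summation to pass to the Coulomb gas with an external source determined by $(-\Delta_J)^{-1}f_\epsilon$; the Gaussian (spin-wave) part of this computation immediately produces $\frac{\beta}{2}(f_\epsilon, (-\Delta_J)^{-1} f_\epsilon)_{\Z^2}$, which by the bounds \eqref{eq:feps_def} and the asymptotics $(-\Delta_J)^{-1}(x,y)\sim -\frac{1}{2\pi v_J^2}\log|x-y|$ converges to $\frac{1}{2v_J^2}(f,(-\Delta_{\R^2})^{-1}f)_{\R^2}$ as $\epsilon\to0$. (ii) Run the renormalisation-group analysis of \cite{dgauss1} on the charge ensemble, now carrying the observable; the main quantitative input is that for $\beta\geq\beta_0(J)$ the activity of the charges is small, so the RG contracts and the cumulative effect of integrating out the charges is (a) a multiplicative renormalisation of the coupling constant, giving $\beta\rightsquigarrow\beta_{\rm eff}=\beta+O_J(e^{-c\beta})$, and (b) error terms that vanish as $\epsilon\to0$. (iii) Prove the needed uniformity in $N$: because $f_\epsilon$ has bounded support of diameter $O(\epsilon^{-1})$ and we take $\epsilon=L^{-N}$, the relevant RG flow only "sees" scales up to $\epsilon^{-1}=L^N$, i.e., essentially the whole torus, and one checks that enlarging the torus further (replacing $\Lambda_N$ by $\Lambda_{N'}$, $N'>N$) changes $\log\avg{e^{(f_\epsilon,\sigma)}}$ by $o(1)$; here is where one invokes the correlation inequalities guaranteeing existence of the weak limit \eqref{eq:TDlimit} and its ergodicity/zero-mean property, to identify $\lim_{N\to\infty}\avg{\cdot}^{\Lambda_N}$ with $\avg{\cdot}^{\Z^2}$ and to control the difference. (iv) Combine (i)–(iii): for fixed small $\epsilon$ the limit $N\to\infty$ gives $\log\avg{e^{(f_\epsilon,\sigma)}}^{\Z^2}_{J,\beta}$ up to $O(\epsilon^{\text{power}})$ corrections, and then $\epsilon\to0$ yields \eqref{e:highbeta-convergence-Z2-thm}.

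The main obstacle I expect is step (iii), the interchange of limits — specifically, obtaining RG estimates on the charge ensemble that are uniform in the volume $L^N$ while the test function lives on a box whose size grows with $N$. In \cite{dgauss1} the test function was macroscopic and fixed relative to the torus, so its interaction with the RG flow was controlled at a single "last" scale; here the source $(-\Delta_J)^{-1}f_\epsilon$ has long tails (logarithmic decay), so the observable couples to charges at all scales and one must show these couplings are summably small. Technically this requires carrying the external field through every RG step with norms that are weighted by the distance to the support of $f_\epsilon$ and degrade only polynomially, and checking that the relevant "loc" (relevant/marginal part) extraction does not generate a growing contribution — the net effect being precisely the harmless coupling-constant renormalisation $\beta_{\rm eff}$. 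A secondary difficulty is the passage from the lattice quadratic form $(f_\epsilon,(-\Delta_J)^{-1}f_\epsilon)_{\Z^2}$ to the continuum form $(f,(-\Delta_{\R^2})^{-1}f)_{\R^2}$ with the correct constant $1/v_J^2$, which is a (by now standard, but still nontrivial) lattice-to-continuum Green's function convergence argument using \eqref{eq:feps_def} and the zero-mean condition $\int f=0$ to kill the divergent constant in the logarithm.
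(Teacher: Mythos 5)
Your overall architecture --- run the renormalisation group of \cite{dgauss1} on the torus while carrying the observable, pass to infinite volume using the correlation inequalities of Appendix~\ref{app:corrineq}, and conclude with a lattice-to-continuum Green's function argument --- is aligned with the paper's route, but two essential ingredients are misidentified.

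First, the scale identification is backwards: $\epsilon = L^{-N}$ is precisely the \emph{macroscopic} regime of \cite{dgauss1}, which produces $(-\Delta_{\T^2})^{-1}$ rather than $(-\Delta_{\R^2})^{-1}$; the paper notes this explicitly just below Theorem~\ref{thm:highbeta-meso}. The mesoscopic statement requires $\epsilon_N L^N \to \infty$, i.e.\ the support of $f_{\epsilon_N}$ must be vanishingly small relative to the torus. Theorem~\ref{thm:highbeta-Z2} is then deduced not by exchanging limits, but by first sending $N\to\infty$ at \emph{fixed} $\epsilon$ --- here Proposition~\ref{prop:infvollimitDG} (Fr\"ohlich--Park--Ginibre monotonicity of the covariance) identifies $\lim_N\avg{\cdot}^{\Lambda_N}_{J,\beta}$ with $\avg{\cdot}^{\Z^2}_{J,\beta}$ and gives convergence of the moment generating function --- and only afterwards $\epsilon\to 0$.

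Second, and more importantly, the core technical device of the paper is missing. You propose carrying the external field through the RG via ``norms weighted by the distance to the support,'' but the paper instead decomposes the translation $Af = \gamma f + C(s,m^2)(f + s\gamma\Delta f)$ into scale-dependent pieces $u_j$ (Lemma~\ref{lemma:reformulation-Z2}, \eqref{eq:extfield_def}): $u_{j_f}$ collects the scales up to the smoothness scale $j_f\approx\log_L\epsilon^{-1}$ and $u_j=\Gamma_j(f+s\gamma\Delta f)$ for $j>j_f$, each smooth on scale $j$ (Lemma~\ref{lem:extfield_bd}) and supported on a single $j$-block containing the origin. Each $u_j$ enters the RG step as a shift of the fluctuation field, generating a localized perturbation coordinate $\Psi_j$ and a scalar energy $e_j$ (Definitions~\ref{def:Psi_j}, \ref{def:extended_Omega_j}); the accumulated effect is $|\tilde Z_N(u,0)/\tilde Z_N(0,0)-1|\le C(M)L^{-\alpha j_f}$, uniformly in $N>j_f$ and $m^2$ (Theorem~\ref{thm:Z_N_ratio_conclusion}). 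Consequently the scaling limit comes entirely from the Gaussian prefactor $\tfrac{1}{2}(f_\epsilon,\tilde C(s,m^2)f_\epsilon)$, and $\beta_{\rm eff}$ arises through the stable-manifold value $s=s_0^c(J,\beta)$ already built into $\tilde C$, not through a multiplicative renormalisation tracked alongside the observable. Without the $u_j$-decomposition it is unclear how distance-weighted norms alone would yield an error that is simultaneously summable over all scales $j\ge j_f$ and uniform in the torus size $N$.
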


Theorem~\ref{thm:highbeta-Z2} superficially resembles \cite[Theorem~\ref{thm:highbeta}]{dgauss1}, but we emphasise
that we are now considering the infinite-volume state; correspondingly the covariance on the right-hand side is now
$(-\Delta_{\R^2})^{-1}$ instead of $(-\Delta_{\T^2})^{-1}$.
The comparison  below \cite[Theorem~\ref{thm:highbeta}]{dgauss1} with previous results for the Discrete Gaussian model
however also applies to the infinite-volume version, i.e., to Theorem~\ref{thm:highbeta-Z2} of this paper.

Theorem~\ref{thm:highbeta-Z2} can be seen as an analogue
for the Discrete Gaussian model (with $\beta \geq \beta_0(J)$)
of the Naddaf--Spencer theorem \cite{MR1461951} which applies to strictly convex smooth 
gradient models.
In our first paper \cite{dgauss1} we discuss many further references concerning such
models and concerning discrete height functions, and we refer to \cite{dgauss1} for a more detailed discussion
and only list here the most relevant references.
For the Discrete Gaussian and XY models, we of course mention the fundamental work of Fr\"ohlich--Spencer \cite{MR634447,MR733469}
as well as the more recent articles \cite{1711.04720,2002.12284,1907.08868,2012.01400,MR4367953,2101.05139,2110.09498,2110.09465}.
For smooth gradient models, there is a very comprehensive picture including stochastic dynamics \cite{MR1463032,MR2228384,MR1872740}
and recent developments include
\cite{ %
MR2251117, 
MR4212193,MR3189075,MR4137943,
MR2855536, 
MR4003143,
1909.13325, 
2002.02946,
DR22,
MR4164451,MR3933043}.
For the smooth but nonconvex gradient models we refer to
\cite{MR2778801,MR2322690,MR2470934,MR2976565} and in particular
\cite{MR1048698} and \cite{1910.13564} which use the renormalisation group approach.
For other discrete height functions, recent works include
\cite{MR2523460,MR4121614,MR3606736,1909.03436,1911.00092,2012.13750,MR4315657}.
Our first paper (and therefore this paper as well) relies in important ways on ideas developed in
\cite{MR2523458,MR1101688,MR1777310,1311.2237,MR2917175}.

As a byproduct of the proof of Theorem~\ref{thm:highbeta-Z2} we also obtain the following mesoscopic
scaling limit for the Discrete Gaussian model on the torus.
(Effective error bounds also follow from the proof.)

\begin{theorem} \label{thm:highbeta-meso}
  Under the same assumptions as in Theorem~\ref{thm:highbeta-Z2},
  there exists $L = L(J)$ such that
  for the Discrete Gaussian model on the torus $\Lambda_N$  of side length $L^N$,
  for any $f \in C_c^\infty(\R^2)$ with $\int f \, dx =0$, $f_\epsilon$ as in \eqref{eq:feps_def},
  and any sequence $\epsilon_N>0$ such that $\epsilon_N\to 0$  as $N\to\infty$
  while $\epsilon_NL^{N} \to \infty$,
  \begin{equation} \label{e:highbeta-convergence-meso}
    \log \avgb{e^{(f_{\epsilon_N},\sigma)}}_{J,\beta}^{\Lambda_N}
    \to
    \frac{\beta_{\rm eff}(J,\beta)}{2 v_J^2}
    (f,(-\Delta_{\R^2})^{-1}f)_{\R^2}, \quad \text{ as } N \to \infty.
  \end{equation}
\end{theorem}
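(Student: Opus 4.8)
\emph{Proof proposal.}
The plan is to extract Theorem~\ref{thm:highbeta-meso} from the renormalisation group (RG) analysis that proves Theorem~\ref{thm:highbeta-Z2}. That analysis studies the finite-volume generating function $\log\avgb{e^{(f_\epsilon,\sigma)}}_{J,\beta}^{\Lambda_N}$ on a torus $\Lambda_N$ of side $L^N$, with $L=L(J)$ the block-spin scale of the finite-range covariance decomposition, by iterating the RG map over the $N$ scales $j=1,\dots,N$, and only afterwards passes to $N\to\infty$ with $f$ (hence $\epsilon$) held fixed. The claim is that the same machinery, carried out while keeping $\epsilon=\epsilon_N$ coupled to $N$ through the mesoscopic condition $\epsilon_N L^N\to\infty$, already yields \eqref{e:highbeta-convergence-meso}; in particular the proof of Theorem~\ref{thm:highbeta-Z2} will come with an effective error bound, valid on $\Lambda_N$, that I expect to vanish in this regime.

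First I would record why $\epsilon_N L^N\to\infty$ is exactly the hypothesis that makes the argument for Theorem~\ref{thm:highbeta-Z2} go through here. Since $\supp f_{\epsilon_N}$ has diameter $O(\epsilon_N^{-1})$, after $j_N:=\lceil\log_L(\epsilon_N^{-1})\rceil+C_f$ coarse-graining steps it has been reduced to a set of order-one diameter, and $\omega_N:=N-j_N=\log_L(\epsilon_N L^N)+O(1)\to\infty$, so this reduction happens strictly inside $\Lambda_N$ with $\omega_N\to\infty$ further scales to spare. Hence the observable enters the RG flow only at scales $j\le j_N$, the last $\omega_N$ scales are source-free, and the a priori bounds on the RG coordinates (smallness and contractivity of the polymer activities, stability of the effective coupling) proved for Theorem~\ref{thm:highbeta-Z2} apply uniformly over all $N$ scales. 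Reading off the top-scale output then gives, with $(-\Delta_J)^{-1}_{\Lambda_N}$ the torus Green's function on mean-zero functions,
\[
  \log\avgb{e^{(f_{\epsilon_N},\sigma)}}_{J,\beta}^{\Lambda_N}
  = \frac{\beta_{\rm eff}(J,\beta)}{2}\,\big(f_{\epsilon_N},(-\Delta_J)^{-1}_{\Lambda_N}f_{\epsilon_N}\big)_{\Lambda_N}
  + r_N, \qquad r_N\to 0.
\]

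Next I would compute the limit of the Gaussian term. Because $f_{\epsilon_N}$ has zero sum and support of diameter $o(L^N)$, the torus Green's function may be replaced by the $\Z^2$ one at no cost: on $\supp f_{\epsilon_N}$ one has $(-\Delta_J)^{-1}_{\Lambda_N}=(-\Delta_J)^{-1}_{\Z^2}+\mathcal{E}_N$, where the constant and $\log L^N$ parts of $\mathcal{E}_N$ are annihilated by the mean-zero test function and the remaining variation of $\mathcal{E}_N$ over a box of side $\epsilon_N^{-1}$ is $O((\epsilon_N L^N)^{-2})$, so that $(f_{\epsilon_N},\mathcal{E}_N f_{\epsilon_N})\to0$ using $\|f_{\epsilon_N}\|_{\ell^1(\Z^2)}\le C_f$. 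Then the asymptotics $(-\Delta_J)^{-1}_{\Z^2}(x,y)\sim-\frac{1}{2\pi v_J^2}\log|x-y|$ together with the scaling relations \eqref{eq:feps_def} — the $\log\epsilon_N$ term dropping by mean-zero, exactly as in the $\Z^2$ theorem — give $(f_{\epsilon_N},(-\Delta_J)^{-1}_{\Z^2}f_{\epsilon_N})_{\Z^2}\to\frac{1}{v_J^2}(f,(-\Delta_{\R^2})^{-1}f)_{\R^2}$, and combining the last three displays yields \eqref{e:highbeta-convergence-meso}.

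The hard part will be the uniformity required in the middle step: one must decouple the observable-dependent part of the RG flow from the bulk flow across the growing number $\omega_N$ of source-free scales, so that the polymer remainder $r_N$ still tends to $0$ even though $j_N\to\infty$ simultaneously with $N$ — equivalently, one needs the effective error bounds from the proof of Theorem~\ref{thm:highbeta-Z2} to be uniform in $N$ and to close under $\epsilon_N\to0$, $\epsilon_N L^N\to\infty$. Granting those bounds, Theorem~\ref{thm:highbeta-meso} is essentially a corollary, the only genuinely new ingredient being the elementary torus-versus-plane Green's function comparison of the third step.
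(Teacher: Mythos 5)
Your overall strategy matches the paper's: Theorems~\ref{thm:highbeta-Z2} and~\ref{thm:highbeta-meso} are proved simultaneously from the same effective bound (the paper's Theorem~\ref{thm:Z_N_ratio_conclusion}, which gives $|\tilde Z_N(u,0)/\tilde Z_N(0,0)-1|\le C(M)L^{-\alpha j_f}$ uniformly in $N>j_f$), and you correctly pinpoint that $\epsilon_N L^N\to\infty$ is exactly what makes $j_f<N$, so the same bound applies in the mesoscopic regime. Where you diverge is in the description of the RG mechanism, and there the picture is actually inverted: in the paper's scheme (Lemma~\ref{lemma:reformulation-Z2} and \eqref{eq:extfield_def}), the covariance is decomposed over scales and the external fields $u_j$ are injected at \emph{every} scale $j\ge j_f$, with $u_j=0$ for $j<j_f$; the first $j_f$ scales are the source-free ones, and the technical core of the paper (Sections~3--6) is showing that the cumulative effect of the nonzero $u_j$'s over the remaining $N-j_f$ scales stays summable and is dominated by the $j=j_f$ contribution, giving $O(L^{-\alpha j_f})$. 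Your claim that "the observable enters the RG flow only at scales $j\le j_N$, the last $\omega_N$ scales are source-free" is the opposite of what happens, and is closer to the observable-at-a-point methods that the paper explicitly contrasts itself with (these do not handle a general external field and do not yield the infinite-volume limit). Your third step (replacing the torus Green's function by the plane one and using mean-zero to kill the constant and $\log L^N$ terms) is a legitimate alternative to the paper's Lemma~\ref{lemma:discrete_approx_of_Lap^-1}, which instead computes $(f_\epsilon,\tilde C(s,m^2)f_\epsilon)$ directly in Fourier space and is cleaner because $\tilde C(s,m^2)$ (not $(-\Delta_J)^{-1}_{\Lambda_N}$) is the exact quadratic form produced by the reformulation; note also that the proportionality constant you wrote, $\beta_{\rm eff}/2$, has to be squared with the $1/(v_J^2+s)$ that the Fourier computation produces, which your argument leaves implicit.
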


Note that the assumption $\epsilon_N L^N\to\infty$ is necessary. Indeed,
if $\epsilon_N \ll L^{-N}$ then the support of $f_\epsilon$ is not a subset of $\Lambda_N$.
Moreover, if $\epsilon_N=L^{-N}$ the limit would correspond to the macroscopic scaling limit
considered in \cite[Theorem~\ref{thm:highbeta}]{dgauss1} which
is different from the right-hand side above (given in terms of $(-\Delta_{\T^2})^{-1}$
rather than $(-\Delta_{\R^2})^{-1}$).

For some of the related open questions, we refer to our discussion in \cite[Section~\ref{sec:main_results}]{dgauss1},
but mention in addition that a characterisation of the gradient Gibbs measures with finite range $J$
as in  \cite[Theorem~9.1.1]{MR2251117} for the nearest-neighbour case would be interesting.

\subsection{Outline of the paper}

This paper relies heavily on our first article on the Discrete Gaussian model \cite{dgauss1},
and in particular we use the set-up and notation from Section~\ref{sec:first_step} and Sections~\ref{sec:scales}--\ref{sec:inequalities} of that paper.
Even though we included some reminders below, we will often refer to \cite{dgauss1} to avoid repetitiveness.

The proofs of Theorems~\ref{thm:highbeta-Z2} and \ref{thm:highbeta-meso} proceed by decomposing the external field from
the moment-generating function into contributions from all scales, with each contribution smooth at the respective scale.
This is set up in Section~\ref{sec:scale_dep_ext_field}. 
Then, the main technical contribution of the present paper compared to \cite{dgauss1} is an extension of the renormalisation group map,
originally defined in \cite[Section~\ref{sec:rg_generic_step}]{dgauss1}, to allow for a scale-dependent external field.
This is carried out in Section~\ref{sec:rg_generic_step_external_field}, after technical preparation
in the preceding sections.

Different methods to extend a renormalisation group flow by observables for
pointwise correlation functions in similar setups to ours were considered in
\cite{MR3345374,MR3332942,1311.2237,MR1156405}.
These approaches do not allow to derive the infinite-volume scaling limit as in our main result,
and we expect that the approach we develop here could have applications to other models.

\subsection{Notation}
\label{sec:notation-Z2}

We use the notation $|a| \leq O(|b|)$ or $a=O(b)$ to denote $|a|\leq C|b|$ for an absolute constant $C>0$
and $a\sim b$ to denote that $\lim a/b =1$ (where the limit is clear from the context). We stress that  all constants appearing below are uniform in $\beta$ 
unless explicitly stated.

Throughout the paper, the dimension will be $d=2$, but we sometimes write $d$ to emphasise the source of the constant $2$.
Let $e_1, \dots, e_d$ be the basis of unit vectors with nonnegative components  spanning $\Z^d$ or the local coordinates of $\Lambda$,
and set $\hat{e} = \{\pm e_1, \cdots, \pm e_d \}$.
For a function $f: \Z^d \to \C$ or $f: \Lambda_N \to \C$, we write
$\nabla^{\mu} f(x) = f(x + \mu) - f(x)$ for $\mu \in \hat{e}$.
For any multi-index $\alpha \in \{ \pm 1, \cdots, \pm d\}^n$ with $n =|\alpha| \geq 1$, 
we write $\nabla^{\alpha} f = \nabla^{e_{\alpha_1}}\cdots  \nabla^{e_{\alpha_n}} f$.
The vector of $n$-th order discrete partial derivatives are denoted by 
\begin{equation} \label{eq:nabla-n}
\nabla^n f (x) = (\nabla^{\mu_1} \cdots \nabla^{\mu_n} f (x) : \mu_k \in \hat{e} \text{ for all } k),
\end{equation}
and we write $|\nabla ^nf(x)|$ for the maximum over all of its components.
$\Delta$ without subscript denotes the \emph{unnormalised} nearest-neighbour Laplacian,
\begin{equation}
  \Delta f (x) = \sum_{\mu \in \hat{e}} (f(x+ \mu) - f(x))
  = \sum_{\mu \in \hat{e}} \nabla^\mu f(x)
  = \frac12 \sum_{\mu \in \hat{e}} \nabla^\mu \nabla^{-\mu} f(x),
\end{equation}
whereas $\Delta_J$ denotes the \emph{normalised} Laplacian \eqref{eq:Delta_J_definition-dgauss2} with finite-range step distribution $J$.

\section{Scale-dependent external fields}
\label{sec:scale_dep_ext_field}

In this section, after briefly reviewing some aspects from the setup of our first paper \cite{dgauss1}, we proceed to describe how the proofs of the above theorems follow by amending the renormalisation group flow constructed in \cite{dgauss1} by suitable external fields $u=(u_j)$, which start to appear at a characteristic scale $j=j_f$ in the renormalisation.
We then proceed, assuming these fields $u$ to have a negligible overall effect, as expressed in Theorem~\ref{thm:Z_N_ratio_conclusion} below, to conclude the proofs of Theorems~\ref{thm:highbeta-Z2} and~\ref{thm:highbeta-meso}. The remaining sections will be geared towards the proof of Theorem~\ref{thm:Z_N_ratio_conclusion}, which appears in Section~\ref{sec:proof_Z_N_ratio}.

\subsection{Multiscale decomposition of the field}
We first briefly review a few key aspects from the setup of our previous paper \cite{dgauss1}, which will prevail here.
As in Section~\ref{sec:intro-dgauss2}, we denote by $\Lambda_N$ the discrete torus of side length $L^N$ and we will later impose that $L$ is sufficiently large, see the discussion at the end of Section~\ref{sec:parameters} for details; the infinite volume limit will then correspond to the limit $N\to\infty$.
As explained in \cite[Section~\ref{sec:first_step}]{dgauss1}, it is convenient to work
with the mass-regularised Discrete Gaussian model
$\avg{\cdot}_{\beta,m^2}$ and take $m^2\downarrow 0$ in the end. This is the probability measure $\avg{\cdot}_{\beta,m^2} \equiv \avg{\cdot}_{\beta,m^2}^{\Lambda_N}$ obtained by replacing $-\Delta_J$ by $-\Delta_J+m^2$ in \eqref{eq:DG_model_1_external_field} and $\Omega^{\Lambda_N}$ by $\Z_\beta^{\Lambda_N}$ where $\Z_\beta = 2\pi\beta^{-1/2}\Z$, i.e., dropping the constraint $\sigma_0=0$. By \cite[Lemma~\ref{lemma:m2to0}]{dgauss1}, then
\begin{equation} \label{eq:mass-limit}
\langle F(\sigma) \rangle_{\beta} = \lim_{m^2 \downarrow 0} \langle F(\sigma) \rangle_{\beta, m^2},
\end{equation}
for any $F$ as appearing above \eqref{eq:DG_model_1_external_field} (and in particular for the choice $F(\sigma)=e^{(f,\sigma)}$ for any $f: \Lambda_N\to \R$).

The renormalisation group analysis will involve a decomposition of the covariance
\begin{equation} \label{eq:C(s)}
 	C(s,m^2)\stackrel{\text{def.}}{=}
 	(C(m^2)^{-1}-s\Delta)^{-1}, \qquad \text{with} \quad C(m^2) = (-\Delta_J+m^2)^{-1} - \gamma\id,
\end{equation}
where the inverses are interpreted on $\R^{\Lambda_N}$ and $\Delta$ is the (unnormalised) nearest-neighbour Laplacian on $\Lambda_N$, and $\gamma$ and $s$ are parameters with
$\gamma \in (0,\frac13)$ and $|s|$ tacitly assumed sufficiently small so that $C(m^2)^{-1}-s\Delta$ is positive definite.
As in \cite[\eqref{eq:frd_ulbds}]{dgauss1}, and without loss of generality, we work from here on under the standing assumptions that $|s| \leq \epsilon_s \theta_J$ (by which \eqref{eq:C(s)} is well-defined) and that, for an arbitrary constant $C>0$, we have $\theta_J \geq C^{-1}$ and $ v_J \geq C^{-1} \rho_J$, where $\theta_J$ and $\rho_J$ refer to the range and spectral characteristics of $J$, defined in \cite[\eqref{eq:rJ_range}, \eqref{eq:theta_def}]{dgauss1}, and $\varepsilon_s$ is the numerical constant appearing in \cite[Proposition~\ref{prop:decomp_compatible}]{dgauss1}. The last two conditions hold for any fixed $J$ as in the theorems. (The use of the constant $C$ will yield uniform estimates over families of $J$
  as above, see \cite[Remark~\ref{rk:highbeta-rho}]{dgauss1}. We do not state these in our main theorems above, but still introduce $C$ to follow the same setup as in \cite{dgauss1}).

Under these assumptions, it follows that for suitable choice of $\gamma \in (0,\frac13)$, which we henceforth regard as fixed,
one can decompose $C(s,m^2)$ from \eqref{eq:C(s)} as in \cite[Section~\ref{sec:scales}]{dgauss1} (see in particular \eqref{eq:frd_of_C^Lambda_N} therein) to obtain, for all $m^2 >0$ (and $|s| \leq \epsilon_s \theta_J$),
\begin{equation} \label{e:Cdecomp-Z2'}
  C(s,m^2)
  = 
    \Gamma_1(s,m^2)+\dots+\Gamma_{N-1}(s,m^2)+\Gamma_N^{\Lambda_N}(s,m^2)+t_N(s,m^2) Q_N.
\end{equation}
The right-hand side is a sum over positive (semi-)definite (covariance) matrices indexed by $\Lambda_N$. The matrix $Q_N$ has all entries  equal to $1/|\Lambda_N|=L^{-dN}$ and $t_N(s,m^2)$ is a scalar satisfying \cite[\eqref{eq:t_N_bound}]{dgauss1}, in particular, diverging like $m^{-2}$ as $m^2 \downarrow 0$. The covariances $\Gamma_{j+1}$ and $\Gamma_{N}^{\Lambda_N}$ in \eqref{e:Cdecomp-Z2'} refer to those defined in \cite[(4.2), (4.3)]{dgauss1}. They correspond to a decomposition over scales $L^j$ of the covariance $C(s,m^2)$. By construction, the matrices $\Gamma_j$ have range $\frac14 L^{j}$ and their key analytical features are summarised in \cite[Lemma~\ref{cor:Gammaj}]{dgauss1}.
We will frequently use the following notation. For $f: \Lambda_N \to \R$, we define (with a slight abuse of notation) $ \Gamma_{j} (f) =\Gamma_j \ast f$ where $(\Gamma_j \ast f) (x) =  \sum_y \Gamma_{j}(x-y) f(y)$ with $\Gamma_{j}(x)= \Gamma_{j}(0,x)$, cf.~\cite[below \eqref{eq:Dt_range}]{dgauss1}.

This completes the introduction of our setup. We observe that in fact, the parameter $s$ in \eqref{eq:C(s)}, which implements the renormalisation of the temperature of the model, can be fixed from the start in the present paper as $s=s_0^c(J,\beta)$ with the latter as defined in \cite[Proposition~\ref{prop:stable_manifold}]{dgauss1}; we will return to this later. 

In what follows, we write $\E_{\Gamma}$ denotes the expectation of a Gaussian field $\zeta$ with covariance $\Gamma$. We will frequently write $\Eplus$ 
for $\E_{\Gamma_{j+1}}$ when $j=1,\dots, N-2$ and $\Eplus$ for $\E_{\Gamma_{N}^{\Lambda_N}}$ when $j=N-1$, whenever the scale $j$ is clear from the context. Since $\Gamma_N^{\Lambda_N}$ satisfies exactly the same upper bounds as $\Gamma_j$ with $j=N$, we will usually not distinguish between the cases $j+1<N$ and $j+1=N$.
Generally, $j$ without further specification is allowed to take values $j=1,\dots, N-1$.

\subsection{Strategy}
\label{sec:strategy}

Contrary to the macroscopic torus scaling limit in \cite{dgauss1}, in which all the scales $j  < N$ appearing in \eqref{e:Cdecomp-Z2'} were treated equally, we will have to distinguish in what follows a characteristic scale $j_f$ at which a given test function $f$ starts to induce a `perturbation,' cf.~\eqref{eq:extfield_def} below, which manifests itself as a shift (or translation) of the corresponding Gaussian field (at the same scale).
This is because the infinite volume limit $N\rightarrow \infty$ in Theorem~\ref{thm:highbeta-Z2} is decoupled from the characteristic scale $j_f$, 
whereas \cite{dgauss1} simply takes $j_f = N$. 
The induced perturbation influences the renormalisation group flow in all the larger scales $j \geq j_f$.
The technical difficulties arising in this paper are due to these changes. 
Fortunately, it will turn out that the infinite chain of perturbations will only impact the analysis on a bounded region by the compact support condition on the external field (see Lemma~\ref{lem:extfield_bd}, for example).


Let $f : \Z^2 \to \R$ be a finitely
supported test function with $\sum_x f(x)=0$.
Let $j_f$ be the smallest integer $(\geq 1)$ such that the support of $f$ and $\Delta f$ is contained in $[0,\frac14 L^{j_f})^2$ up to a spatial translation. 
If $f : \Lambda_N \to \R$ then $j_f$ is defined similarly by identifying $\Lambda_N$ with $([0,L^N)\cap \Z)^2 \subset \Z^2$, whence $j_f \leq N$.
We call $j_f$ the smoothness scale of $f$ and will frequently assume that
\begin{equation} \label{eq:f_bd_ass}
  \|f\|_{\ell^{\infty} (\Z^2)}  \leq c L^{-2j_f},
\end{equation}
where $c$ will be an $L$-dependent small constant fixed below Lemma~\ref{lem:extfield_bd}.
The interpretation of $j_f$ as a smoothness scale becomes clear when we focus on lattice functions scaled like $f_{\epsilon}$ given by \eqref{eq:feps_def}.
Indeed,  each $\epsilon^{-2} f_{\epsilon} (\epsilon^{-1} x)$ is an approximation of a smooth function, thus $j_{f_\epsilon}$ is the scale where $f_{\epsilon}$ becomes smooth: $L^{-j_{f_{\epsilon}}} \simeq \epsilon^{-1}$.

The macroscopic scaling limit considered in \cite{dgauss1} corresponds to $j_f=N$, but now
we are interested in $j_f \ll N$.
The analysis of the macroscopic scaling limit
proceeded through a translation of the field
by $\gamma f + C(s,m^2) (f+s\gamma \Delta f )$ at scale $N$, with $ C(s,m^2)$ as given by \eqref{eq:C(s)}. 
The term $\gamma f$
and the difference between $f$ and $f+s\gamma \Delta f $ will be insignificant
and result from the preliminary renormalisation group step in \cite[Section~\ref{sec:first_step}.3]{dgauss1}, which integrates out the i.i.d.~field with variance $\gamma$, cf.~\eqref{eq:C(s)}, thus transforming the original discrete field into a smooth periodic potential (integrated with respect to a Gaussian measure).
In view of \eqref{e:Cdecomp-Z2'}, we now rewrite $C(s,m^2)$ as
\begin{equation} \label{e:Cdecomp-Z2}
  C(s,m^2)
  = 
    \Gamma_{\leq j_{f}}(s,m^2)+\sum_{j=j_f+1}^{N-1} \Gamma_{j}(s,m^2)+\Gamma_N^{\Lambda_N}(s,m^2)+t_N(s,m^2) Q_N,
\end{equation}
where, with hopefully obvious notation, $ \Gamma_{\leq j} = \sum_{1\leq k \leq j} \Gamma_k$. Our starting point in this paper for the proofs of Theorems~\ref{thm:highbeta-Z2} and \ref{thm:highbeta-meso}
is also a translation, but at the smoothness scale $j_f$ rather than the macroscopic scale $N$, and by $\gamma f + \Gamma_{\leq j_f} (f+s\gamma \Delta f )$, see Lemma~\ref{lemma:reformulation-Z2} below.
An observation (made precise by Lemma~\ref{lem:extfield_bd} below) is that $\gamma f + \Gamma_{\leq j_f} (f+s\gamma \Delta f )$ is smooth at scale $j_f$ because $f$ is,
while on the other hand,  $\Gamma_k (f+s\gamma \Delta f )$ is smooth for $k>j_f$ because of the smoothing properties of
the covariance $\Gamma_k$. We will show that this allows to implement translations iteratively
for all scales $k\geq j_f$,
with small errors accumulating from each scale $k$ starting from $k=j_f$ and that as $j_f\to\infty$ the sum of these errors
is governed  by the contribution from the scale $j_f$ and tends to $0$ as $j_f\to\infty$.

\subsection{Scale-dependent external fields}

To formulate the above strategy more precisely,
first  recall (as mentioned above) that the parameter $s$ is fixed as $s=s_0^c(J,\beta)$ from the start of this paper.
Further let $s_0=s=s_0^c(J,\beta)$, and define (as in \cite[\eqref{eq:Z_0_definition}]{dgauss1})
\begin{equation}
  Z_0(\varphi) = e^{\frac{s_0}{2}(\varphi,-\Delta\varphi)+\sum_{x\in \Lambda_N} \tilde U(\varphi_x)}, \quad \varphi \in \R^{\Lambda_N},
  \label{eq:Z_0_definition-bis}
\end{equation}
with the function $\tilde{U}$ given by \cite[\eqref{e:tildeF}]{dgauss1}, which is a $2\pi \beta^{-1/2}$ periodic function of a single real variable. The next lemma is a slight reformulation of \cite[Lemma~\ref{lemma:reformulation}]{dgauss1}.
For its statement let $\tilde C(s,m^2)$ be given as in \cite[\eqref{e:tildeC}]{dgauss1}, i.e.,
\begin{equation}\label{eq:def_Ctilde}
  \tilde C(s, m^2) = \gamma(1 + s\gamma\Delta) + (1 + s\gamma \Delta) C(s, m^2) (1 + s\gamma \Delta),
\end{equation}
and recall the covariance decomposition \eqref{e:Cdecomp-Z2}.  
\begin{lemma}
    \label{lemma:reformulation-Z2}
    For all $\beta>0$,
    $\gamma\in (0,\frac13)$, $m^2 \in (0,1]$, 
    $|s|=|s_0|$ small, 
    one has for any $f\in \R^{\Lambda_N}$ such that $\sum_x f(x) =0$,
  \begin{equation} \label{e:reformulation-Z2}
    \big\langle e^{(f, \sigma)} \big\rangle^{\Lambda_N}_{\beta, m^2}
    \propto e^{\frac12 (f, \tilde C(s, m^2) f)} \E_{C(s,m^2)} \big[ Z_0(\varphi+\textstyle\sum_{j=j_f}^N u_j) \big],
  \end{equation}
  where the expectation acts on $\varphi$ and
    \begin{align} \label{eq:extfield_def}
    u_j  & =
    \begin{cases}
      0 & (j<j_f)\\
      \gamma f+ \Gamma_{\leq j_f} (f+s\gamma\Delta f) & (j=j_f)\\
      \Gamma_{j}(f+s\gamma\Delta f) & (N>j>j_f)\\
      \Gamma_N^{\Lambda_N}(f+s\gamma\Delta f) & (j=N).
    \end{cases}
  \end{align}
\end{lemma}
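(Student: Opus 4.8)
The plan is to start from the known reformulation \cite[Lemma~\ref{lemma:reformulation}]{dgauss1}, which (in the notation of the present paper) expresses $\langle e^{(f,\sigma)}\rangle^{\Lambda_N}_{\beta,m^2}$ as $e^{\frac12(f,\tilde C(s,m^2)f)}\,\E_{C(s,m^2)}[Z_0(\varphi+\gamma f + C(s,m^2)(f+s\gamma\Delta f))]$, i.e.\ with a single translation of $\varphi$ at the macroscopic level by $w:=\gamma f + C(s,m^2)(f+s\gamma\Delta f)$. The only thing that needs to change is the bookkeeping of how that translation is distributed across scales. So the first step is simply to recall this identity and identify $w$ with $\sum_{j=j_f}^N u_j$ for the $u_j$ defined in \eqref{eq:extfield_def}.

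The key computation is then to verify this identification. Using the covariance decomposition \eqref{e:Cdecomp-Z2}, write
\[
  C(s,m^2)(f+s\gamma\Delta f) = \Gamma_{\leq j_f}(s,m^2)(f+s\gamma\Delta f) + \sum_{j=j_f+1}^{N-1}\Gamma_j(s,m^2)(f+s\gamma\Delta f) + \Gamma_N^{\Lambda_N}(s,m^2)(f+s\gamma\Delta f) + t_N(s,m^2)Q_N(f+s\gamma\Delta f).
\]
Adding $\gamma f$ to the first term produces exactly $u_{j_f}$; the middle sum produces $\sum_{j_f<j<N} u_j$; and $\Gamma_N^{\Lambda_N}(s,m^2)(f+s\gamma\Delta f)$ is $u_N$. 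The point that must be checked is that the last term vanishes: $Q_N$ has all entries $L^{-dN}$, so $(Q_N g)(x) = L^{-dN}\sum_y g(y)$ for any $g$, and here $g = f+s\gamma\Delta f$ has $\sum_y g(y) = \sum_y f(y) + s\gamma\sum_y(\Delta f)(y) = 0 + 0 = 0$, using the hypothesis $\sum_x f(x)=0$ and the fact that the sum of a discrete Laplacian over the torus vanishes. Hence $t_N(s,m^2)Q_N(f+s\gamma\Delta f)=0$ and $w = \sum_{j=j_f}^N u_j$ exactly, which establishes \eqref{e:reformulation-Z2}. One should also note that $u_j=0$ for $j<j_f$ is consistent with starting the sum at $j_f$, and that the prefactor $e^{\frac12(f,\tilde C(s,m^2)f)}$ is unchanged since $\tilde C$ in \eqref{eq:def_Ctilde} only involves $f$ directly, not its scale decomposition.

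There is essentially no analytic obstacle here — the statement is an algebraic rearrangement of \cite[Lemma~\ref{lemma:reformulation}]{dgauss1} — so the ``main obstacle,'' such as it is, is purely a matter of care: making sure the mean-zero condition is exactly what kills the zero-mode term $t_N Q_N$ (which is crucial since $t_N\sim m^{-2}$ diverges as $m^2\downarrow 0$, so a nonzero coefficient would be fatal), and confirming that the ranges of the index $j$ match up at the endpoints $j=j_f$, $j=N-1$, $j=N$ with the convention that $\Gamma_N^{\Lambda_N}$ plays the role of $\Gamma_N$. I would also briefly remark that the hypotheses listed ($\beta>0$, $\gamma\in(0,\frac13)$, $m^2\in(0,1]$, $|s|=|s_0|$ small) are exactly those under which \cite[Lemma~\ref{lemma:reformulation}]{dgauss1} and the decomposition \eqref{e:Cdecomp-Z2} are valid, so no new smallness conditions are introduced.
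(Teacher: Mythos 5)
Your proposal is correct and follows essentially the same route as the paper: invoke \cite[Lemma~\ref{lemma:reformulation}]{dgauss1} to get a single translation by $\gamma f + C(s,m^2)(f+s\gamma\Delta f)$, apply the decomposition \eqref{e:Cdecomp-Z2}, and kill the zero-mode term $t_N Q_N(f+s\gamma\Delta f)$ using $\sum_x f(x)=0$ (and $\sum_x \Delta f(x)=0$). Your added remark that the vanishing of this term is crucial because $t_N$ diverges as $m^2\downarrow 0$ is a correct observation of why the mean-zero hypothesis is not cosmetic.
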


\begin{proof}
  By \cite[Lemma~\ref{lemma:reformulation}]{dgauss1},
  \begin{equation} \label{e:reformulation-Z2'}
    \sum_{\sigma \in \Z_{\beta}^{\Lambda_N}} e^{-\frac12(\sigma,(-\Delta_J + m^2)\sigma)} e^{(f,\sigma)}
    \propto e^{\frac12 (f, \tilde C(s, m^2) f)} \E_{C(s,m^2)} \big[ Z_0(\varphi+Af) \big],
  \end{equation}
  with
  \begin{equation}
    A= (1+s\gamma \Delta)^{-1} \tilde C(s,m^2)
    =  \gamma + C(s,m^2) (1 + s\gamma\Delta).
  \end{equation}
  The statement follows by applying the decomposition \eqref{e:Cdecomp-Z2} of $C(s,m^2)$ which gives
\begin{align}
Af = \sum_{j\leq N} u_j + t_NQ_N (f+s\gamma\Delta f) = \sum_{j \leq N} u_j,
\end{align}  
 where the last equality follows because $\sum_x f(x) = 0$, and hence $Q_N f = Q_N \Delta f = 0$.
\end{proof}

The renormalisation group flow constructed in \cite{dgauss1}, which we now sometimes refer to as the bulk renormalisation group flow,
is in terms of the recursion (cf.~\cite[\eqref{eq:general_RG_step1}]{dgauss1})
\begin{equation} \label{eq:dgauss1_RG_flow}
  Z_{j+1}(\varphi') = \E_{\Gamma_{j+1}}Z_j(\varphi'+\zeta), \quad \varphi' \in \R^{\Lambda_N},
\end{equation}
where here and below, $\E_{\Gamma_{j+1}}$ is the Gaussian expectation with covariance $\Gamma_{j+1}$
which always acts on the field $\zeta$.
To incorporate the scale-dependent external fields $u=(u_j)$ we now
define $Z_0(u,\varphi)=Z_0(\varphi)$ and
\begin{equation} \label{eq:Zplusu_def}
  Z_{j+1}(u,\varphi')
  = \E_{\Gamma_{j+1}} Z_j(u, \varphi'+\zeta+u_{j}), \quad \varphi' \in \R^{\Lambda_N},
\end{equation}
with $\Gamma_N^{\Lambda_N}$ instead of $\Gamma_{j+1}$ when $j+1=N$. 
Finally set
\begin{equation} \label{eq:Z^tilde_definition}
\tilde Z_N(u,\varphi') = \E_{t_N Q_N} Z_N(u,\varphi'+\zeta + u_N ), \quad \varphi' \in \R^{\Lambda_N}.
\end{equation}
Together, \eqref{eq:Zplusu_def}, \eqref{eq:Z^tilde_definition} and \eqref{e:Cdecomp-Z2'} imply in particular that the expectation appearing on the right-hand side of \eqref{e:reformulation-Z2} can be recast as (with $\E_{C(s,m^2)}$ acting on $\varphi$)
\begin{equation}
\label{eq:expectation-rewrite}
\E_{C(s,m^2)} \big[ Z_0(\varphi+ \textstyle\sum_{j=j_f}^N u_j) \big] = \tilde Z_N(u,0).
\end{equation}
Our analysis of the $Z_j(u,\varphi')$ relies on the property that the external fields $u_{j}$ are smooth on scale $j$ for all $j$, as demonstrated by the next lemma.
Here  assume that $j_f$ in \eqref{eq:extfield_def} is the smoothness scale of $f$, i.e.,
the smallest integer such that $\supp f$ is contained in a block of side length $\frac14 L^{j_f}$.
By definition, a block of size $L$ is any set of the form $x + ([0,L) \cap \Z)^2$ for some $x \in L\Z^2$.
Let $\norm{u_j}_{C^2_j} = \norm{u_j}_{C^2_j(\Z^2)} = \max_{n=0,1,2} \norm{\nabla^n_j u_j}_{\ell^{\infty}(\Z^2)}$, cf.~\cite[\eqref{eq:normC2}]{dgauss1}. 
In the sequel we often tacitly view a function $f$ with domain $\Lambda_N$ (such as $u_j$) as defined on $\Z^2$ by identifying $\Lambda_N$ with $[0,L^N)^2$ and extending $f$ to have value $0$ outside this set.


\begin{lemma} \label{lem:extfield_bd}
  There exists an $L$-independent constant $C >0$ 
   such that the following holds:
  for all $f:\Z^d\to\R$ satisfying $\sum f=0$
  and such that 
   $f$ and $\Delta f$ have
  support in a block of side length $\frac14 L^{j_f}$, the functions
  $u_j$ defined by \eqref{eq:extfield_def}
  have support in blocks of side lengths $\frac34 L^{j}$ for $j\leq N-1$ and 
  \begin{equation} \label{eq:extfield_bd}
    \|u_j\|_{C_j^2} \leq C L^{2j_f + 2} \norm{f}_{\ell^{\infty} (\Z^2)} ,  \qquad j \leq N.
  \end{equation}
\end{lemma}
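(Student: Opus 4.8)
The plan is to prove the two assertions of Lemma~\ref{lem:extfield_bd} --- the support property and the $C^2_j$ bound --- separately for each range of $j$, using the smoothing and finite-range properties of the $\Gamma_k$ recorded in \cite[Lemma~\ref{cor:Gammaj}]{dgauss1}. The support statement is the easier part: for $j = j_f$, the field $u_{j_f} = \gamma f + \Gamma_{\leq j_f}(f + s\gamma\Delta f)$ is a sum of $f$, $\Delta f$ (supported in a block of side $\tfrac14 L^{j_f}$ by hypothesis) convolved with the $\Gamma_k$, $k \leq j_f$, each of which has range $\tfrac14 L^k \leq \tfrac14 L^{j_f}$; adding ranges gives support in a block of side $\tfrac14 L^{j_f} + \tfrac14 L^{j_f} \leq \tfrac34 L^{j_f}$ (after enlarging to a union of adjacent blocks of the dyadic-type partition, which is where one extra $\tfrac14 L^j$ can be absorbed). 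For $j_f < j \leq N-1$, $u_j = \Gamma_j(f + s\gamma\Delta f)$ has support in a block of side $\tfrac14 L^{j_f} + \tfrac14 L^{j} \leq \tfrac12 L^{j} \leq \tfrac34 L^{j}$, using $j_f \leq j$ and $L > 1$. I would spell out carefully how support sets combine on the torus versus on $\Z^2$ via the identification in the statement, but this is bookkeeping.

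For the quantitative bound \eqref{eq:extfield_bd}, the key input is the scale-wise regularity estimate for $\Gamma_k$, namely a bound of the schematic form $\|\Gamma_k \ast h\|_{C^2_k} \leq C \|h\|_{\infty} \cdot (\text{volume/decay factor})$ or, more usefully here, that $\Gamma_k$ maps an input supported in a block of side $\leq L^{j_f}$ into an output whose $C^2_k$ norm (discrete derivatives weighted by $L^{nk}$) is controlled. Concretely: since $\|f\|_{C^2_{j_f}} = \max_{n\leq 2} \|\nabla^n_{j_f} f\|_\infty$ and $\|\nabla^n f\|_\infty \leq L^{-nj_f}\|f\|_{C^2_{j_f}}$, while $\Delta f$ involves two extra plain gradients so $\|f + s\gamma\Delta f\|_\infty \leq C L^{-2j_f}\|f\|_{C^2_{j_f}}$ (using $|s| \leq \epsilon_s\theta_J$, $\gamma < \tfrac13$), and $f + s\gamma\Delta f$ is supported in a block of side $\tfrac14 L^{j_f}$, hence has $L^1$-norm $\leq C L^{2j_f}\cdot L^{-2j_f}\|f\|_{C^2_{j_f}} = C\|f\|_{C^2_{j_f}}$. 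Wait --- the cleaner route is to track the $L^\infty$ norm times the support volume: $\|h\|_1 \leq C L^{-2j_f} \|f\|_{C^2_{j_f}} \cdot L^{2j_f} = C\|f\|_{C^2_{j_f}}$ where $h = f + s\gamma\Delta f$. Then for $j > j_f$, $\|\nabla^n_j \Gamma_j h\|_\infty = \|\Gamma_j \ast \nabla^n_j h\|_\infty \leq \|\nabla^n_j \Gamma_j\|_\infty \|h\|_1$, and \cite[Lemma~\ref{cor:Gammaj}]{dgauss1} gives $\|\nabla^n_j \Gamma_j\|_\infty \leq C L^{-(d-2)j} = C$ in $d = 2$ (the log-divergent case, where the relevant bound on $\Gamma_j$ itself is $O(1)$ and its rescaled derivatives are as well). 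Hmm, but then one gets $\|u_j\|_{C^2_j} \leq C\|f\|_{C^2_{j_f}}$, which is \emph{stronger} than \eqref{eq:extfield_bd} by a factor $L^{2j_f}$; the stated bound is presumably the one convenient downstream, so I would simply bound $\|f\|_{C^2_{j_f}} \leq L^{2j_f}\|f\|_{C^2_{j_f}}$ trivially, or reexamine whether the $\Gamma_j$ derivative bound actually carries a factor. In fact the honest statement from \cite[Lemma~\ref{cor:Gammaj}]{dgauss1} is likely $\|\nabla^n_j\Gamma_j(0,\cdot)\|_{L^\infty} \leq C L^{-dj}$ times a constant (since $\Gamma_j$ as a kernel is $O(L^{-dj})$ pointwise on a block of size $L^j$ so that $\|\Gamma_j\|_1 = O(1)$), and one uses $\|\Gamma_j \ast h\|_{C^2_j} \leq \|\Gamma_j\|_{1}\|h\|_{C^2_j}$ after re-expressing; I'd follow whichever normalization \cite{dgauss1} uses and the $L^{2j_f}$ then appears from passing from $\|h\|_{C^2_j}$ to $\|h\|_{C^2_{j_f}}$ via $L^{n(j - j_f)}$ factors bounded by the support size. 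For $j = j_f$ one additionally needs $\|\Gamma_{\leq j_f} h\|_{C^2_{j_f}} \leq C\sum_{k\leq j_f}\|\Gamma_k \ast h\|_{C^2_{j_f}}$, and each term with $k \leq j_f$ is controlled by $\|\Gamma_k\|_1 \|h\|_{C^2_{j_f}}$ together with the geometric decay $\|\Gamma_k\|_1 \leq C$ summed over $k$, giving the logarithmically-many-scales factor $\sum_{k \leq j_f} C$ --- which is $O(j_f)$, not $O(1)$! This is the one genuine subtlety.

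So the main obstacle is exactly the scale $j = j_f$ term $\Gamma_{\leq j_f}(f + s\gamma\Delta f)$: naively summing $C^2_{j_f}$ bounds over the $j_f$ scales $k = 1,\dots,j_f$ produces an unwanted factor of $j_f$. The resolution is that $\Gamma_{\leq j_f}$ should not be estimated scale by scale but as a whole: $\Gamma_{\leq j_f} = \sum_{k \leq j_f}\Gamma_k$ telescopes to (the relevant truncation of) the full covariance $C(s,m^2)$ minus the higher-scale tail, and the Green's function $C(s,m^2)$ convolved with a mean-zero function supported in a block of side $\tfrac14 L^{j_f}$ has the logarithmic structure $(-\Delta_J + m^2)^{-1}$, so that $\|\Gamma_{\leq j_f} h\|_\infty \leq C\log(L^{j_f})\|h\|_1$ --- but the mean-zero cancellation $\sum_x h(x) = 0$ kills the log divergence and leaves $\|\Gamma_{\leq j_f} h\|_\infty \leq C L^{2j_f}\|h\|_{\infty}$ bounded by $C\|f\|_{C^2_{j_f}}$, and similarly for two discrete derivatives (which improve convergence), with the weight $L^{2j_f}$ now matching \eqref{eq:extfield_bd} precisely. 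Concretely I would write $\Gamma_{\leq j_f}(0,x) = C(s,m^2)(0,x) - \sum_{k > j_f}\Gamma_k(0,x)$, use that $C(s,m^2)(0,\cdot)$ differs from $(2\pi v_J^2)^{-1}(-\log|\cdot|)$ (up to the $\gamma\,\mathrm{id}$ shift and the $s\Delta$ perturbation, both harmless) by a bounded-with-bounded-derivatives correction on the block, and exploit $\int h = 0$ to replace the logarithm by its oscillation $\leq C j_f$ over a block of diameter $\sim L^{j_f}$... which reintroduces a $j_f$. The clean fix is instead: apply the derivatives to $\Gamma_{\leq j_f}$ rather than to $h$ for the $n = 1, 2$ parts (where $\nabla^2 \log|\cdot|$ is integrable and gives $O(L^{-2j_f})$ after scaling, no log), and for $n = 0$ use that $\Gamma_{\leq j_f}$ applied to a mean-zero, block-supported $h$ equals $\Gamma_{\leq j_f}$ applied after subtracting a constant, reducing to the gradient case; carrying this out with the explicit estimates of \cite[Lemma~\ref{cor:Gammaj}]{dgauss1} for each $\Gamma_k$ and summing the now geometrically (not arithmetically) organized contributions yields \eqref{eq:extfield_bd}. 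I expect the remaining steps ($j > j_f$ and the support claims) to be routine once this scale-$j_f$ estimate is in place.
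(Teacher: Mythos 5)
You correctly identify the one genuine subtlety: estimating $\Gamma_{\leq j_f}(f+s\gamma\Delta f)$ scale by scale via $\sum_{k\leq j_f}\|\Gamma_k\|_1$ produces an unwanted arithmetic factor of $j_f$. But your proposed resolutions do not close this gap. The ``telescope to the full covariance and use mean-zero'' route founders on the fact that the oscillation of the logarithmic kernel over the support of $g$ is genuinely $O(j_f)$ (you concede this yourself: ``which reintroduces a $j_f$''). Your last suggestion --- put derivatives on the kernel for $n=1,2$ and ``reduce to the gradient case'' for $n=0$ --- is left at the level of intent, and the $n=0$ reduction does not in fact work without extra input, because the first gradient of the log kernel is still only marginally integrable in $d=2$. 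So the core estimate of the lemma is not established by your argument.

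The paper's device is different and cleaner: it uses the continuous decomposition $\Gamma_{\leq j}(x)=\int_1^{\frac14 L^j} D_t(x)\,dt$ from the finite-range construction, where $D_t$ has range $t$ and $\|D_t\|_\infty\leq Ct^{-(d-1)}$; hence $\sum_x|D_t(x)|\leq Ct$ in $d=2$ and $\|\Gamma_{\leq j_f}\|_{\ell^1}\leq\int_1^{\frac14 L^{j_f}}Ct\,dt\leq CL^{2j_f}$, with no logarithm and no cancellation needed. One then puts \emph{all} derivatives on $g$ (since $\nabla^\alpha(\Gamma_{\leq j_f}*g)=\Gamma_{\leq j_f}*\nabla^\alpha g$) and bounds by $\|\Gamma_{\leq j_f}\|_{\ell^1}\|\nabla^\alpha_{j_f}g\|_\infty\leq CL^{2j_f}\|g\|_{C^2_{j_f}}$, which is exactly \eqref{eq:extfield_bd} at $j=j_f$. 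The mean-zero condition $\sum g=0$ is used in the paper not at $j=j_f$ but at $j>j_f$, $|\alpha|=0$, to kill the $\log L$ in $\|\Gamma_j\|_\infty$ by writing $\Gamma_j g(x)=\sum_y(\Gamma_j(x-y)-\Gamma_j(x))g(y)$ and using the Lipschitz bound on $\Gamma_j$; your write-up does not separate out this case cleanly either. Your treatment of $j>j_f$, $|\alpha|\geq 1$ and the support claim is on track.
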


In particular,  if \eqref{eq:f_bd_ass} holds with $c \leq (C L^{2})^{-1}$,  then $\sup_{j}\|u_j\|_{C_j^2} \leq 1$. 
From here on, we fix (any) such value of $c$; this choice is implicit when referring to \eqref{eq:f_bd_ass} in the sequel.

\begin{proof}
Let $g=f+s\gamma\Delta f$ and note that by assumption $g$ has support in a block of side length $\frac14 L^{j_f}$.
Also, $\|g\|_{\ell^{\infty}} \leq (1+2|s|\gamma) \|f\|_{\ell^{\infty}} \leq C\|f\|_{\ell^{\infty}}$ 
since $\|\Delta f\|_{\ell^{\infty}} \leq 8 \|f\|_{\ell^{\infty}}$ for any $j$.
We may identify $\Gamma_j$ with its convolution kernel, i.e., $\Gamma_j g = \Gamma_j*g$.
Then
$\Gamma_j$ is supported in a block of side length $\frac14 L^{j}$
and satisfies 
$\|\nabla^\alpha_j \Gamma_j\|_{\ell^\infty} \leq C L^2$ for $|\alpha| \leq 2$
where $\nabla^\alpha_j = L^{j|\alpha|} \nabla^\alpha$, see \cite[Corollary~\ref{cor:Gammaj}]{dgauss1}, thus
 \begin{equation}
     \|\nabla_j^\alpha \Gamma_j g \|_{\ell^\infty}
     \leq L^{2j_f} \|\nabla_j^\alpha \Gamma_j\|_{\ell^\infty} \|g\|_{\ell^\infty}
     \leq C L^{2j_f + 2} \|f\|_{\ell^\infty}.
 \end{equation}
Thus the desired statement holds if $j < N$. 

The same estimates hold when $j=N$, i.e., with $\Gamma_j$ is replaced by $\Gamma_N^{\Lambda_N}$
which satisfies analogous bounds, see \cite[Corollary \ref{cor:Gammaj}]{dgauss1}.
This completes the proof of the bound \eqref{eq:extfield_bd}.

The statement about
the support of the $u_j$ follows immediately from the assumption
that the support of $f$ and $g$ 
have diameter $\frac14 L^{j_f} \leq \frac14 L^j$ for all $j \geq j_f$
and that $\Gamma_j$ has range $\frac14 L^j$.
\end{proof}

\subsection{Conclusion of the argument}

In Section~\ref{sec:proof_Z_N_ratio} we will show the following theorem
from which the proof of Theorem~\ref{thm:highbeta-Z2} can be completed
similarly as the torus result in \cite[Section~\ref{sec:integration_of_zero_mode}]{dgauss1}.
The theorem is stated under somewhat more general condition on 
the sequence $(u_j)_j = (u_j \in \R^{\Lambda} )_j$ of given external fields that are uniformly bounded and supported on a single block in the sense that:
\begin{quote}
\begin{itemize}
\item[\customlabel{assump:u}{$\assumpu$}]
There exists $j_u$
  such that $u_j =0$ for $j <  j_u$, 
  $\norm{u_j}_{C_j^2} \leq 1$ for each $j\leq N$,
  and $u_j$ is supported on the unique $B_0 \in \cB_j$ such that $0\in B_0$ and $d(\partial B_0, \supp (u_j)) > 4$.
\end{itemize}
\end{quote}
For the same reason that $j_{f}$ was called a smoothness scale of $f$,
we call $j_u$ the smoothness scale (of $u = (u_j)_j$).
Note that,  by translation invariance of the Discrete Gaussian model on the torus $\Lambda_N$,
we may assume that $f$ is centred with respect to the block decomposition;
that is, $\text{supp}(f)$ and $\text{supp}(\Delta f)$ are contained in the box $m+[0,\frac14L_{j_f})^2$, where $m$ is one of the lattice points closest to the center of some block $B \in \cB_j$ for all $j_f \leq j \leq N$.
In particular, then, by Lemma~\ref{lem:extfield_bd}, for all scales $j \leq N$,
there is a block $B\in \cB_j$ such that whenever $L \geq C$, $N_{ 5}(\supp(u_j)) \subset B$
where $N_k(X)$ denotes the set of points with $\ell^1$-distance at most $k$ from the set $X$. Thus the condition on the support of $u_j$ is not stronger than the condition on the support of $f$.

\begin{theorem} \label{thm:Z_N_ratio_conclusion}
  Let $J$ be a finite-range step distribution as in the statements of Theorems~\ref{thm:highbeta-Z2} and~\ref{thm:highbeta-meso}.
  There are $\beta_0(J) \in (0,\infty)$, a (large) integer $L=L(J)$ (which can be chosen dyadic),
  and a constant $\alpha>0$ such that if 
  $u=(u_j)$ satisfies \ref{assump:u}
  there is $C >0$ such that  for $\beta \geq \beta_0(J)$ and $N > j_u$,
  \begin{equation}
   \absa{ \frac{\tilde Z_N(u,0)}{\tilde Z_N(0,0)} -1 } \leq C L^{-\alpha j_u} .
  \end{equation}
\end{theorem}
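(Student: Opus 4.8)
The plan is to control the ratio $\tilde Z_N(u,0)/\tilde Z_N(0,0)$ by propagating a comparison between the $u$-flow $Z_j(u,\cdot)$ defined in \eqref{eq:Zplusu_def}--\eqref{eq:Z^tilde_definition} and the bulk flow $Z_j(0,\cdot)$ of \eqref{eq:dgauss1_RG_flow}, scale by scale, starting from the common initial condition at scale $j=j_f$ (where $u_j=0$ for $j<j_f$, so the two flows agree up to that scale). Below scale $j_f$ nothing happens; at and above scale $j_f$ the external field $u_j$ shifts the argument of the Gaussian convolution, and Lemma~\ref{lem:extfield_bd} guarantees that each $u_j$ is smooth at its own scale $j$ with $\|u_j\|_{C^2_j}\leq CM$ uniformly in $j$ and $N$. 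The idea is that a shift by a function that is smooth at scale $j$ is, from the point of view of the renormalised coordinates (the polymer activities $K_j$ and the relevant coupling $E_j$, or whatever the bulk representation in \cite[Section~\ref{sec:rg_generic_step}]{dgauss1} uses), a small perturbation: it perturbs the $E_j$-type coordinate by an amount that can be reabsorbed and perturbs the nonperturbative remainder $K_j$ by an amount that is contractive under the RG map except near scale $j_f$, where the perturbation is injected.

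The key steps, in order, would be: (i) Set up the extended renormalisation group map from Section~\ref{sec:rg_generic_step_external_field} — i.e., the analogue of \cite[\eqref{eq:general_RG_step1}]{dgauss1} but acting on the pair (bulk coordinates, external field $u$) — and record the crucial contraction estimate: the $u$-dependent part of the RG map is a contraction in the relevant norm with a gain of a factor $L^{-\alpha'}$ per scale for some $\alpha'>0$, plus a source term at each scale $j$ of size $\lesssim \|u_j\|_{C^2_j}^2 \cdot (\text{small factor from } \beta \text{ large})$ coming from the quadratic-and-higher response to the shift. (ii) Iterate this bound from $j=j_f$ up to $j=N$: because the source terms enter at scale $j$ but are damped by $L^{-\alpha'(k-j)}$ over the remaining $k-j$ scales, the total accumulated discrepancy is dominated by a geometric series whose leading term is the scale-$j_f$ contribution, of order $L^{-\alpha j_f}$ — here one uses that $u_{j_f}=\gamma f+\Gamma_{\leq j_f}(f+s\gamma\Delta f)$ is supported in a block of side $\tfrac34 L^{j_f}$ so only $O(1)$ blocks carry the perturbation, and the smallness $L^{-\alpha j_f}$ comes from the normalisation \eqref{eq:f_bd_ass}, $\|f\|_{C^2_{j_f}}\leq ML^{-2j_f}$, combined with volume factors $L^{2j_f}$ and the fact that correlations of the bulk flow decay on each scale. (iii) Handle the last two steps separately: the scale-$N$ covariance $\Gamma_N^{\Lambda_N}$ satisfies the same estimates as $\Gamma_j$ (as noted after \eqref{eq:def_Ctilde}), so the shift by $u_N$ is treated identically; and the zero-mode step \eqref{eq:Z^tilde_definition} with $t_NQ_N$ is harmless because $Q_N u_N = 0$ (since $\sum f = 0$, exactly as used in the proof of Lemma~\ref{lemma:reformulation-Z2}), so the zero-mode integration commutes with the $u$-translation and contributes nothing to the ratio. (iv) Convert the scale-$N$ bound on $|Z_N(u,\varphi')/Z_N(0,\varphi')-1|$, valid uniformly for $\varphi'$ in the relevant field region, into the stated bound on $|\tilde Z_N(u,0)/\tilde Z_N(0,0)-1|$ by integrating against the remaining Gaussian measures, exactly as in \cite[Section~\ref{sec:integration_of_zero_mode}]{dgauss1}.

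The main obstacle, as the paper itself signals, is step (i)–(ii): establishing that the extended RG map is a well-defined contraction with the external field along for the ride, i.e., that inserting a scale-$j$-smooth translation at each scale $j\geq j_f$ does not destroy the stability of the bulk flow (the invariant manifold / fixed-point structure built in \cite{dgauss1}) and that the errors it introduces are summable with the claimed power $L^{-\alpha j_f}$. Concretely one must (a) check that the perturbed polymer activities still satisfy the inductive norm bounds with the same constants — this requires that the shift $u_j$ be absorbed without spoiling the $\Loc$/$\Tay$ decomposition and the reblocking estimates, using that $u_j$ and $\Delta u_j$ (hence $\nabla^n u_j$ for $n\le 2$) are $O(M)$ in the scale-$j$ norm and supported in $O(1)$ blocks; and (b) get a genuine per-scale contraction factor $L^{-\alpha'}$ rather than just boundedness, which is what makes the geometric series converge to the scale-$j_f$ value and produces the clean $L^{-\alpha j_f}$ with $\alpha<\alpha'$. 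I expect (b) to rely on the same mechanism as the decay of $\|u_j\|$-free remainders in \cite{dgauss1} (irrelevance of the remainder coordinate under reblocking in $d=2$ at large $\beta$), now tracked through the coupling to $u$; controlling the interaction between the irrelevant bulk remainder and the marginal/relevant response to the translation — and showing the latter is quadratically small in $\|u_j\|$ up to terms that telescope — is the technical heart of the argument, and is presumably what Sections~\ref{sec:rg_generic_step_external_field} and onward are devoted to.
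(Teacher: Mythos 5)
Your high-level plan -- track a bulk flow and a $u$-perturbed flow starting to diverge at scale $j_f$, show the extended RG map is contractive, sum a geometric series -- is indeed what the paper does (Sections~\ref{sec:rg_generic_step_external_field}--\ref{sec:proof_Z_N_ratio}), and you correctly identify that the last covariance $\Gamma_N^{\Lambda_N}$ satisfies the same bounds and that the zero-mode step requires separate handling. However, there are two interlocking imprecisions in your bookkeeping that would derail the argument if carried out as stated.

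First, you attribute the per-scale source term to a ``quadratic-and-higher response to the shift,'' of size $\lesssim \|u_j\|_{C^2_j}^2 \cdot (\text{small factor from large } \beta)$. But $\|u_j\|_{C^2_j}\leq CM$ is only uniformly bounded, not decaying in $j$ -- the normalisation $\|f\|_{C^2_{j_f}}\leq ML^{-2j_f}$ is tuned so that Lemma~\ref{lem:extfield_bd} produces a scale-\emph{independent} bound on $u_j$, not so that $u_j$ shrinks. Feeding a $j$-independent source of size $O(M^2 e^{-c\beta})$ into a geometric series $\sum_{j} S_j L^{-\alpha'(N-j)}$ gives a sum dominated by $j\approx N$, which is $O(1)$, not $O(L^{-\alpha j_f})$. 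The actual decay comes from elsewhere: the perturbation polymer activity $\Psi_j$ injected at scale $j$ is bounded by $C_\Psi(M)\cdot\max\{\|U_j\|_{\Omega_j^U},\|K_j\|_{\Omega_j^K}\}$ (Lemma~\ref{lemma:G_j^Psi_is_Psi_regulator}(1)), and it is the \emph{bulk} coordinates $U_j, K_j$ that have already decayed to $O(L^{-\alpha j})$ by Theorem~\ref{thm:tuning_s}. So the source is $O(L^{-\alpha j})$ not because $u_j$ is small, but because the part of the renormalised potential that can respond to a shift is.

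Second, and more structurally, you treat the discrepancy between the two flows as a single quantity that is damped by the contraction and therefore peaks at $j_f$. In fact there are two distinct pieces. The polymer-level discrepancy $K_j(\cdot;(\Psi_k)_{k<j}) - K_j(\cdot;0)$ is shown (Theorem~\ref{thm:dynamics_of_KK_j}) to satisfy the inductive bound $O(L^{-\alpha j})$, so at the final scale it is $O(L^{-\alpha N})\to 0$ -- it contributes nothing to the ratio in the limit. What survives and gives the $L^{-\alpha j_f}$ in the conclusion is the \emph{scalar} ``one-point energy'' coordinate $e_N$ defined in \eqref{eq:energy-renorm}--\eqref{eq:e_j+1_definition}: it is a running sum $e_N=\sum_{j=j_f}^{N-1}\mathfrak{e}_{j+1}$ that is \emph{not} damped, with each increment $|\mathfrak{e}_{j+1}|\lesssim \|\vec K_j\|_{\Omega_j^{\vec K}} = O(L^{-\alpha j})$ (Theorem~\ref{thm:e_j_estimate}, Corollary~\ref{cor:dynamics_of_e_j}), so the geometric sum is dominated by its first term $j=j_f$. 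The final ratio is then $\tilde Z_N(u,0)/\tilde Z_N(0,0) = e^{e_N}\cdot(1+O(L^{-\alpha N}))/(1+O(L^{-\alpha N}))$. Your sketch alludes to ``the $E_j$-type coordinate'' being ``reabsorbed'' but does not track its cumulative value, which is where the whole answer lives; without that, the contraction picture alone cannot reproduce $L^{-\alpha j_f}$.
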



Assuming Theorem~\ref{thm:Z_N_ratio_conclusion} to hold, and in view of Lemma~\ref{lemma:reformulation-Z2}, the proofs of
Theorems~\ref{thm:highbeta-Z2} and~\ref{thm:highbeta-meso} are readily completed by  means of the following elementary lemma, as explained below.
This lemma is the infinite-volume analogue of \cite[Lemma~\ref{lemma:integral_of_zero_mode}]{dgauss1};
we postpone its proof to the end of this section and first give the details for the proof of
Theorems~\ref{thm:highbeta-Z2} and~\ref{thm:highbeta-meso}.
In what follows, for $N > j_{f_{\varepsilon}}$, we tacitly identify $f_{\varepsilon}$ with the corresponding function having domain on the torus $\Lambda_N$ by identifying $\text{supp}(f_{\varepsilon})$ with a suitable subset of the torus $\Lambda_N$. We write $ \tilde{C}^{\Lambda_N} \equiv  \tilde {C}$ for the covariance matrix defined in \eqref{eq:def_Ctilde} to stress the dependence on the underlying torus $\Lambda_N$.

\begin{lemma} \label{lemma:discrete_approx_of_Lap^-1}
  Let $f\in C_c^\infty(\R^2)$ with $\int f \, dx =0$ and $f_\epsilon$ be as in \eqref{eq:feps_def}.
  Then
\begin{equation}
  \lim_{\epsilon\to 0}\lim_{N\to\infty} \lim_{m^2\downarrow 0}(f_\epsilon,\tilde{C}^{\Lambda_N}
  (s,m^2)f_\epsilon)
  = \frac{1}{v_J^2+s} (f,(-\Delta_{\R^2})^{-1}f)_{\R^2},
\end{equation}
and the statement also holds if the two leftmost limits  are replaced by $N\to\infty$
with $\epsilon=\epsilon_N \to 0$ while $\epsilon_N L^N\to\infty$.
\end{lemma}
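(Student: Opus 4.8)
\textbf{Proof proposal for Lemma~\ref{lemma:discrete_approx_of_Lap^-1}.}

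The plan is to unfold the definition \eqref{eq:def_Ctilde} of $\tilde C(s,m^2)$, pass the mass limit $m^2 \downarrow 0$ and the volume limit $N \to \infty$ inside, and then recognise the resulting quadratic form as a Riemann sum converging to the continuum Green's function pairing. First I would write
$\tilde C(s,m^2) = \gamma(1+s\gamma\Delta) + (1+s\gamma\Delta)C(s,m^2)(1+s\gamma\Delta)$
and observe that since $\sum_x f_\epsilon(x)=0$, the contribution of the $\gamma\, Q_N$-type pieces and of $t_N Q_N$ in the decomposition \eqref{e:Cdecomp-Z2'} vanishes, so the $m^2 \downarrow 0$ limit exists and equals the analogous expression with $C(s,0)$ on the zero-mean sector; the $\gamma(1+s\gamma\Delta)f_\epsilon$ term contributes $\gamma\|f_\epsilon\|_{\ell^2}^2$-size quantities plus $O(\gamma|s|)$ times discrete $H^1$-type norms, all of which are $O(\epsilon^{2+d}\cdot\epsilon^{-d}) = O(\epsilon^2) \to 0$ by the bounds in \eqref{eq:feps_def}. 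So the limit reduces to $\lim_{\epsilon}\lim_N (g_\epsilon, (-\Delta_J - s\Delta)^{-1} g_\epsilon)_{\Z^2}$ where $g_\epsilon = (1+s\gamma\Delta)f_\epsilon$ (or really $f_\epsilon + s\gamma\Delta f_\epsilon$), again using that $C(m^2)^{-1} - s\Delta = -\Delta_J + m^2 - s\Delta$ on the zero-mean sector after removing $\gamma\id$.

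Next I would handle the volume limit: on $\Lambda_N$ with $f_\epsilon$ supported well inside (which holds once $L^N \gg C_f\epsilon^{-1}$, consistent with $\epsilon_N L^N \to \infty$), the torus Green's function $(-\Delta_J - s\Delta)^{-1}_{\Lambda_N}$ applied between compactly supported mean-zero functions converges as $N\to\infty$ to the $\Z^2$ Green's function $G_{J,s} := (-\Delta_J - s\Delta)^{-1}$ acting on mean-zero summable functions — this is the standard finite-range-perturbation statement, using that $-\Delta_J - s\Delta$ is, for $|s|$ small, a finite-range elliptic operator whose infinite-volume inverse has logarithmic kernel $G_{J,s}(x,y) \sim -\frac{1}{2\pi(v_J^2+s)}\log|x-y|$; the coefficient $v_J^2 + s$ comes from the fact that the symbol of $-\Delta_J - s\Delta$ near zero momentum is $(v_J^2+s)|p|^2 + O(|p|^4)$, since $\Delta_J$ has $v_J^2|p|^2$ leading symbol (by the normalisation in the displayed asymptotics for $(-\Delta_J)^{-1}$) and $-\Delta$ contributes $|p|^2$ (again by the normalisation of $v^2$ there, $v_{\rm nn}^2 = 1$ for the unnormalised nearest-neighbour Laplacian). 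I would make this precise either via the torus/whole-space random walk Green's function comparison or via Fourier analysis on $\Lambda_N$, noting the zero mode is excluded on both sides.

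Finally, for the scaling limit $\epsilon \to 0$: writing $(g_\epsilon, G_{J,s}\, g_\epsilon)_{\Z^2} = \sum_{x,y} g_\epsilon(x) G_{J,s}(x,y) g_\epsilon(y)$, I would split $G_{J,s}(x,y) = -\frac{1}{2\pi(v_J^2+s)}\log|x-y| + r(x-y)$ with $r$ having a limit at infinity and being controlled near the diagonal; the remainder $r$ and the short-distance part contribute $o(1)$ because $\sum|g_\epsilon| = O(\epsilon^{d/2}\cdot\epsilon^{-d/2}) = O(1)$ actually needs the mean-zero cancellation and the $\nabla f_\epsilon$ bound, handled exactly as in the proof of Lemma~\ref{lem:extfield_bd} (subtracting $\Gamma_j(x)$ there; here one subtracts a constant from $G_{J,s}$, legitimate since $\sum g_\epsilon = 0$). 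The main term becomes, after substituting $g_\epsilon \approx f_\epsilon$ (the $s\gamma\Delta f_\epsilon$ correction again being $O(\epsilon^2)$ in the relevant norm) and using $f_\epsilon(x) \approx \epsilon^{1+d/2}f(\epsilon x)$ from \eqref{eq:feps_def},
$-\frac{1}{2\pi(v_J^2+s)}\sum_{x,y}\epsilon^{d}f(\epsilon x)\,\epsilon^{d}f(\epsilon y)\,\epsilon^{-2}\log|x-y| \to -\frac{1}{2\pi(v_J^2+s)}\int\int f(u)f(w)\log|u-w|\,du\,dw = \frac{1}{v_J^2+s}(f,(-\Delta_{\R^2})^{-1}f)_{\R^2}$,
since $(-\Delta_{\R^2})^{-1}$ in $d=2$ has kernel $-\frac{1}{2\pi}\log|u-w|$; the $\epsilon^{2d-2}\cdot\epsilon^{-2d}$ powers balance in $d=2$. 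Making the Riemann-sum convergence rigorous (uniform approximation $f_\epsilon \leftrightarrow f$, dominated convergence against the mild log singularity, and the replacement $g_\epsilon \to f_\epsilon$) is routine; the version with $\epsilon_N L^N \to \infty$ follows since that is exactly the regime where the support-containment and the $N\to\infty$ Green's function convergence both kick in before $\epsilon_N\to 0$, so one may interleave the two limits. \textbf{The main obstacle} I anticipate is the uniformity of the torus-to-$\Z^2$ Green's function convergence for the \emph{general} finite-range operator $-\Delta_J - s\Delta$ (as opposed to nearest-neighbour), specifically controlling $G^{\Lambda_N}_{J,s}(x,y) - G_{J,s}(x,y)$ uniformly for $x,y$ in the $\epsilon^{-1}$-ball while $L^N/\epsilon^{-1} \to \infty$ at a possibly slow rate; this should be doable by a Fourier/reflection argument on $\Lambda_N$ together with the elliptic regularity of the symbol near zero, but it is the step that requires genuine (if standard) care rather than bookkeeping.
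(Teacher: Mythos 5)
Your proposal is mathematically sound as a blueprint, but it takes a genuinely different, real-space route from the paper's proof, which works entirely in Fourier space. The paper uses $\hat f_\epsilon(0)=0$ to drop the zero mode, writes the (limits of the) quadratic form as
$\frac{1}{4\pi^2}\int_{[-\pi/\epsilon,\pi/\epsilon]^2}\frac{\epsilon^2\lambda_J(\epsilon p)^{-1}-\epsilon^2\gamma}{1+s\lambda(\epsilon p)\lambda_J(\epsilon p)^{-1}}|\hat f_\epsilon(\epsilon p)|^2\,dp$,
and concludes by dominated convergence: pointwise convergence of the symbol follows from $\epsilon^{-2}\lambda(\epsilon p)\to|p|^2$, $\epsilon^{-2}\lambda_J(\epsilon p)\to v_J^2|p|^2$, and the two-sided control $|\hat f_\epsilon(\epsilon p)|\leq C|p|(1+|p|)^{-3}$ (the $|p|$ near zero from $\sum f_\epsilon=0$, the $|p|^{-2}$ at infinity from the $\nabla^2 f_\epsilon$ bound in \eqref{eq:feps_def}) provides the dominating function. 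The $N\to\infty$ and $m^2\downarrow0$ limits become the standard passage from torus Fourier sums to the $[-\pi,\pi]^2$ integral for compactly supported mean-zero data, so no separate Green's function comparison is ever needed.

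The real-space route you sketch can be made to work, and your bookkeeping (the $\gamma(1+s\gamma\Delta)$ term being $O(\epsilon^2)$, dropping the $t_NQ_N$ piece by mean-zero, replacing $g_\epsilon$ by $f_\epsilon$, and the exponent-balancing in the Riemann sum in $d=2$) is essentially correct, modulo small typos in the powers of $\epsilon$. But you would need to genuinely establish two inputs that the paper never touches: (a) the on-diagonal/large-distance asymptotics of the whole-lattice Green's function $G_{J,s}(x)\sim-\frac{1}{2\pi(v_J^2+s)}\log|x|$ for the general finite-range operator $-\Delta_J-s\Delta$, and (b) the uniform torus-to-$\Z^2$ Green's function comparison on the mean-zero sector at distances $O(\epsilon^{-1})$ while $L^N\epsilon\to\infty$ at an arbitrary rate. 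You correctly identify (b) as the main obstacle; the most natural way to make it precise is via the Fourier representation on $\Lambda_N$ and a Riemann-sum estimate, at which point one is essentially redoing the paper's proof anyway. In short, what the Fourier approach buys is exactly the elimination of your admitted main difficulty: the symbol computation and the uniform bound $|\hat f_\epsilon(\epsilon p)|\leq C|p|(1+|p|)^{-3}$ subsume both the Green's function asymptotics and the torus comparison in a single dominated-convergence argument. So while your outline would eventually arrive at the same answer, the step you flag as needing ``genuine care'' is a real gap as written, and the Fourier route is the more efficient way to fill it.
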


\begin{proof}[Proof of Theorems~\ref{thm:highbeta-Z2} and \ref{thm:highbeta-meso}]
Our proof proceeds as the following. 
We will first prove our main limit results with $f_{\epsilon}$ replaced by $\tau f_{\epsilon}$ for $\tau >0$ sufficiently small (depending on $C_f$ of \eqref{eq:feps_def} and $c$ of \eqref{eq:f_bd_ass}). 
The convergence can then be extended to all $\tau \in \C$ by a standard argument which we include for completeness.

  Given $f \in C_c^\infty(\R^2)$ with $\int f\, dx=0$
  and $f_\epsilon$ as in \eqref{eq:feps_def}, set $j_f =\lceil \log_L (8C_f \epsilon^{-1})\rceil$.
  Then using the first two conditions in \eqref{eq:feps_def}, 
  it readily follows that 
  $f_{\epsilon}$ satisfies \eqref{eq:f_bd_ass} for all $\epsilon \in (0,1)$ (including that $f_{\epsilon}$ is supported on a block of side length $L^{j_f}/4$).
  Now define $u_j= u_j[\epsilon]$ according to \eqref{eq:extfield_def} with $ f_{\epsilon}$ in place of $f$.
  Then by Lemma~\ref{lem:extfield_bd},  $(\tau u_j)_j$ satisfies \ref{assump:u} with $j_u = j_f$ whenever $\tau > 0$ is small enough depending on $C_f$ and $L$.
  Now by Lemma~\ref{lemma:reformulation-Z2} and \eqref{eq:expectation-rewrite},
\begin{equation}
  \langle e^{\tau (f_{\epsilon}, \sigma)} \rangle^{\Lambda_N}_{\beta, m^2} = e^{\frac{\tau^2}{2} (f_{\epsilon}, \tilde{C} (s, m^2) f_{\epsilon})} \frac{\tilde{Z}_N (\tau u[\epsilon], 0)}{\tilde{Z}_N (0,0)}.
\end{equation}
Since \ref{assump:u} holds for $\tau u[\epsilon]$,
the assumption of Theorem~\ref{thm:Z_N_ratio_conclusion} is satisfied
uniformly in $\epsilon$. Therefore
\begin{align}
\frac{\tilde{Z}_N (\tau u [\epsilon], 0)}{\tilde{Z}_N (0, 0)}
  = 1+ O_f(e^{-\alpha \log (8 C_f \epsilon^{-1}) } )
  = 1+ O_f(\epsilon^{\alpha})
\end{align}
uniformly in $m^2, \epsilon$ and $j_f < N$.
In the context of Theorem 1.1, the last condition $j_f<N$ is immediate as soon as $N \geq C(\varepsilon)$ since $\varepsilon > 0$ is fixed while $N \to \infty$;
in the context of Theorem 1.2, it follows from our assumption $\varepsilon_NL^N \to \infty$.
Finally, by Lemma~\ref{lemma:discrete_approx_of_Lap^-1},
if either first $N\to\infty$ and then $\epsilon \to 0$, or if
$\epsilon = \epsilon_N \rightarrow 0$ such that $\epsilon_N L^N \rightarrow \infty$, we have
\begin{equation}
  \lim_{m^2 \downarrow 0} \log \langle e^{\tau (f_{\epsilon}, \sigma)} \rangle^{\Lambda_N}_{\beta, m^2} \rightarrow \tau^2 \frac{\betaeff (J, \beta)}{2v_J^2} (f, (-\Delta_{\R^2})^{-1} f )_{\R^2} .
\end{equation}
By \eqref{eq:mass-limit}, using that $\sum f_\epsilon =0$,
the left-hand side equals
$\log\avg{e^{\tau (f_{\epsilon},\sigma)}}_{J,\beta}^{\Lambda_N}$.
Thus with $f_{\epsilon}$ replaced by $\tau f_{\epsilon}$ with sufficiently small $\tau >0$,  
the proof of Theorem~\ref{thm:highbeta-Z2} follows on account of Proposition~\ref{prop:infvollimitDG},
and Theorem~\ref{thm:highbeta-meso} follows directly from the above, 
i.e., 
\begin{align} \label{e:highbeta-convergence-Z2-thm-bis}
    \log \avgb{e^{\tau (f_\epsilon,\sigma)_{\Z^2}}}_{J,\beta}^{\Z^2}
    & \to
    \tau^2 \frac{\beta_{\rm eff}(J,\beta)}{2 v_J^2}
    (f,(-\Delta_{\R^2})^{-1}f)_{\R^2} , 
    	\qquad \text{ as } \epsilon \to 0,
    \\
    \log \avgb{e^{\tau (f_{\epsilon_N},\sigma)}}_{J,\beta}^{\Lambda_N}
    & \to
    \tau^2 \frac{\beta_{\rm eff}(J,\beta)}{2 v_J^2}
    (f,(-\Delta_{\R^2})^{-1}f)_{\R^2}, \qquad \text{ as } N \to \infty.
\end{align}

Now, we show that the domain of $\tau$ can be extended to $\C$ using the Gaussian domination inequality.
Indeed, by \eqref{e:highbeta-convergence-Z2-thm-bis}, we see
\begin{align}
\begin{split}
	\avgb{  (f_\epsilon,\sigma)_{\Z^2}^{2n}  }_{J,\beta}^{\Z^2}
    & \to
    \frac{(2n) !}{2^n n !} \frac{\beta_{\rm eff}(J,\beta)}{2 v_J^2}
    (f,(-\Delta_{\R^2})^{-1}f)_{\R^2}^n \\
    \avgb{  (f_\epsilon,\sigma)_{\Z^2}^{2n +1}  }_{J,\beta}^{\Z^2}
    	&= 0
\end{split}
	\label{eq:poly_moment_bounds}
\end{align}
for each $n\in \N$.
Also,  for any $T \geq 0$,
by the Taylor's theorem (for the second equality),
there exists $\theta \in [0,1]$ such that
\begin{align}
  \sum_{n > k} \frac{T^n}{n !} |(f_\epsilon, \sigma)|^n = e^{T | (f_\epsilon, \sigma) |} - \sum_{n=0}^k \frac{T^n}{n!} | (f_\epsilon, \sigma) |^n 
  & = \frac{e^{\theta T | (f_\epsilon , \sigma) |} }{(k+1) !} 
  \leq 
  \frac{e^{T (f_\epsilon , \sigma)}  + e^{-T (f_\epsilon , \sigma)}}{(k+1) !}
  .
\end{align}
But by \cite{MR496191} (see also \cite[Proposition~1.2]{1711.04720}), we have the Gaussian domination
\begin{equation}
  \label{eq:ginibre_1-bis}
	\avg{ e^{(g,\sigma)}}_{J,\beta}^{\Lambda_N} \leq e^{\frac{\beta}{2} (g,(-\Delta_J)^{-1}_{\Lambda} g)}
\end{equation}
for any $g : \Lambda_N \to\R$ with $\sum g=0$,
so we obtain
\begin{equation}
  \Big| \Big\langle \sum_{n > k} \frac{\tau^n}{n !} (f_\epsilon, \sigma)^n \Big\rangle_{J, \beta}^{\Lambda_N} \Big| \leq \frac{2}{(k+1) !} e^{\frac{\beta}{2} T^2 (f_\epsilon, (-\Delta)^{-1} f_\epsilon)}
\end{equation}
upon letting $T = | \operatorname{Re} (\tau) |$.
In other words, 
$
\langle \sum_{n = 0}^k \frac{\tau^n }{n !} (f_\epsilon, \sigma)^n \rangle_{J, \beta}^{\Lambda_N}
\rightarrow \langle e^{\tau (f_\epsilon, \sigma) } \rangle_{J, \beta}^{\Lambda_N}
$	as $k\rightarrow \infty$,
uniformly in $\epsilon$ and $N$,  proving
\begin{equation}
  \lim_{\epsilon \rightarrow 0, N \rightarrow \infty} \lim_{k\rightarrow \infty} \Big\langle \sum_{n = 0}^k \frac{\tau^n}{n !} (f_{\epsilon}, \sigma)^n \Big\rangle_{J, \beta}^{\Lambda_N} = 
  \lim_{k\rightarrow \infty} 	\lim_{\epsilon \rightarrow 0, N \rightarrow \infty} \Big\langle \sum_{n = 0}^k \frac{\tau^n}{n !} (f_{\epsilon}, \sigma)^n \Big\rangle_{J, \beta}^{\Lambda_N}
  .
\end{equation}
But by \eqref{eq:poly_moment_bounds}, the latter is $\tau^2 \frac{\beta_{\rm eff}(J,\beta)}{2 v_J^2} (f,(-\Delta_{\R^2})^{-1}f)_{\R^2}$, completing the proof of Theorem~\ref{thm:highbeta-Z2}. 
The extension for Theorem~\ref{thm:highbeta-meso} is done analogously.
\end{proof}

\begin{proof}[Proof of Lemma~\ref{lemma:discrete_approx_of_Lap^-1}]
  In what follows, given $f_{\varepsilon}: \Z^2 \to \R$, we denote by $\hat{f}_{\varepsilon}$ its Fourier transform, defined as in \cite[\eqref{eq:discr-FT}]{dgauss1}.
By definition of $\tilde C(s,m^2)$ and since $\hat{f}_\epsilon (0) = 0$, one has
\begin{align} 
\lim_{N\to\infty }\lim_{m^2\rightarrow 0} (f_\epsilon, \tilde{C}(s,m^2) f_\epsilon)
  &= \frac{\epsilon^2}{4\pi^2}\int_{[-\pi/\epsilon,\pi/\epsilon]^2} \frac{\lambda_{J}(\epsilon p)^{-1} (1- s \gamma \lambda (\epsilon p))}{1+ s \lambda (\epsilon p)  (\lambda_{J} (\epsilon p)^{-1}  - \gamma ) } |\hat{f}_\epsilon (\epsilon p)|^2  \, dp, \label{eq:lim-no-epsilon}
\end{align}
where $\lambda(p)$ is the Fourier multiplier of the (unnormalised) discrete Laplacian $-\Delta$
and $\lambda_J(p)$ that of the (normalised) range-$J$ Laplacian $-\Delta_J$,
see \cite[Section~\ref{sec:decomp-fourier}]{dgauss1}.
By \cite[Lemma~\ref{lem:lambda_error}]{dgauss1},
\begin{equation}
  \lim_{\epsilon \to 0} \epsilon^{-2} \lambda (\epsilon p) = |p|^2,
  \qquad \lim_{\epsilon\to 0}  \epsilon^{-2} \lambda_{J} (\epsilon p) = v_{J} ^2|p|^2 ,
\end{equation}
and the
fraction in the integrand in \eqref{eq:lim-no-epsilon} is bounded by $C|p|^{-2}$
uniformly in $\varepsilon$ and $p \in [-\pi/\epsilon,\pi/\epsilon]^2$.
Moreover, as we now argue, \eqref{eq:feps_def} implies that
$\hat{f}_\epsilon ( \epsilon p) \rightarrow \hat{f} (p)$ as $\epsilon \to 0$ for each $p \in \R^2$
and that
$|\hat f_\epsilon(\epsilon p)| \leq C|p|(1+|p|)^{-3}$.
To see this in detail, we start from
\begin{equation}
\hat{f}_{\epsilon} (\epsilon p) = \sum_{y \in \epsilon \Z^2} f_{\epsilon} (y/\epsilon) e^{-iy\cdot p}.
\end{equation}
For $|\hat f(p)-\hat f_\epsilon(\epsilon p)| \to 0$ pointwise,
use $f \in C_c^\infty(\R^2)$ and the last condition in \eqref{eq:feps_def} to see that, with $[\cdot]$ denoting the integer part,
\begin{equation}
  |\hat f(p)-\hat f_\epsilon(\epsilon p)|
  \leq
  \int_{\R^2} |f(y)(e^{-iy\cdot p}-e^{-i\epsilon[y/\epsilon] \cdot p})| \, dy
   + \int_{\R^2} |f(y)-\epsilon^{-2}f_\epsilon([y/\epsilon])| \, dy
  \to 0.
\end{equation}
To see the bound on $\hat{f}_{\epsilon} (\epsilon p)$,
use summation by parts to write
\begin{equation}
  \lambda (p) |\hat{f}_{\epsilon}(p)|  =   |\widehat{\Delta f}_{\epsilon}(p)| = | \sum_{x\in \Z^2} e^{-ip \cdot x} \Delta f_{\epsilon}(x) | \leq \norm{ \Delta f_{\epsilon}}_{\ell^1(\Z^2)}.
\end{equation}
By \eqref{eq:feps_def},
\begin{equation}
  \norm{ \Delta f_{\epsilon}}_{\ell^1 (\Z^2)}
  \leq
  C_f^2 (\epsilon^{-1}+1)^2\norm{ \Delta f_{\epsilon}}_{\ell^\infty (\Z^2)}  
  \leq 2 C_f^2 \norm{ (\epsilon^{-1} \nabla)^2 f_{\epsilon} }_{\ell^{\infty} (\Z^2)} \leq 2 C_f^3 \epsilon^{2}
  ,
\end{equation}
and by \cite[Lemma~\ref{lem:lambda_error}]{dgauss1}, we have that $\frac{1}{\epsilon^2 |p|^2} \lambda(\epsilon p) \geq \frac{4}{\pi^2}$. 
Thus it follows that $|\hat{f}_{\epsilon} (\epsilon p )|\leq  C|p|^{-2}$.
On the other hand, since $\sum f_\epsilon =0$ and $\|f_\epsilon\|_{\ell^\infty} \leq C_f\epsilon^{2}$,  also
\begin{equation}
  |\hat{f}_{\epsilon} (\epsilon p)|
  =  \Big|\sum_{y \in \epsilon \Z^2} f_{\epsilon} (y/\epsilon) (e^{-iy \cdot p}-1)\Big|
  \leq \|f_\epsilon\|_{\ell^\infty} \sum_{y \in \epsilon \Z^2: |y|\leq C_f} |y \cdot p|
  \leq C C_f^4 |p|,
\end{equation}
and therefore $|\hat{f}_{\epsilon} (\epsilon p )|\leq  C|p| (1+|p|)^{-3}$ when combined with $|\hat f_\epsilon(\epsilon p)| \leq C|p|^{-2}$.

Finally, using the convergence in Fourier space and that
the integrand is dominated by 
$C|p|^{-2} \times ( |p|(1+|p|)^{-3} )^{ 2}\leq C(1+|p|)^{-6}$
which is integrable over $\R^2$, the Dominated convergence theorem implies
\begin{equation}
  \lim_{\epsilon\to 0}\lim_{N\rightarrow \infty} \lim_{m^2 \rightarrow 0} (f_\epsilon, \tilde{C}(s, m^2) f_\epsilon)
  = \frac{1}{4\pi^2} \int_{\R^2} \frac{1}{v_{J}^2 + s} |p|^{-2} |\hat{f}(p)|^2 \, dp
  = \frac{1}{v_{J}^2 +s} (f, (-\Delta_{\R^2})^{-1} f)
\end{equation}
as claimed.
\end{proof}

\section{Norms and contraction estimates}

We now prepare the ground for the proof of Theorem~\ref{thm:Z_N_ratio_conclusion}, which will essentially follow by suitably extending the RG flow developed in \cite{dgauss1}. This extension is designed to accomodate the external field $u$. In the present section, we discuss the necessary amendments to the norms introduced in \cite[Section~\ref{sec:norms}]{dgauss1} required to carry this out, as well as the resulting contraction estimates, cf.~\cite[Section~\ref{sec:inequalities}]{dgauss1}.
\subsection{Norms and regulators without external field}
\label{sec:recap}

We recall some essential elements of \cite{dgauss1}. Given $\Lambda_N$, the discrete two-dimensional torus of side lengths $L^N$ and a distinguished point $0 \in \Lambda_N$, 
let $\pi_N : \Z^2 \rightarrow \Lambda_N$ be the canonical projection with $\pi_N (0) = 0$. Then for each $j=0,\cdots, N$, $\cB_j$ ($j$-blocks) will be the sets of the form $\pi_N ( ([0, L^j) \cap\Z)^2  + n L^j )$ for $n\in \Z^2$, $\cP_j$ ($j$-scale polymers) are any subsets (not necessarily connected) of $\Lambda_N$ that can be obtained as the union of $j$-blocks.
For various notions related to $\cP_j$, see \cite[Section~\ref{sec:scales}]{dgauss1}.
Functions $F(X, \varphi)$ smooth in $\varphi$ that only depend on $\varphi|_{X^*}$ for each $X\in \cP_j$ are called polymer activities at scale $j$, see \cite[Section~\ref{sec:norms}]{dgauss1}; here $X^*$ refers to the small-set neighborhood of $X$, see \cite[Section~\ref{sec:polymersdef0}]{dgauss1}.

In \eqref{eq:dgauss1_RG_flow}, $Z_j$ will always be parametrised as
\begin{align}
Z_j (\varphi) &= e^{-E_j|\Lambda_N|}\sum_{X\in \cP_j (\Lambda_N)} e^{U_j (\Lambda_N \backslash X, \varphi)} K_j (X, \varphi) \label{eq:Z_j_form} \\
  U_j (X, \varphi) &= \frac{1}{2} s_j |\nabla \varphi|^2_X + \sum_{q\geq 1} L^{-2j} z_j^{(q)} \sum_{x\in X}  \cos(\sqrt{\beta} q \varphi(x))
                     = \frac{1}{2} s_j |\nabla \varphi|^2_X + W_j(X,\varphi)
                     , \label{eq:U_j_form}
\end{align}
with $U_j(\emptyset)=0$ and initial conditions $s_0 \in\R$ given, $E_0 =0$, $K_0 (X, \varphi) = 1_{X =\emptyset}$ and $z_0=  (z_0^{(q)})_{q\geq 0}$ given, where the latter refer to the Fourier coefficients of the periodic potential $\tilde{U}$ in \eqref{eq:Z_0_definition-bis}, see \cite[\eqref{eq:fourier_tildeV}]{dgauss1}.
The coordinates $U_j$ and $K_j$ are polymer activities, and in \cite[Sections~\ref{sec:norms} and \ref{sec:rg_generic_step}]{dgauss1}, they are controlled using the norms $\norm{\cdot}_{\Omega_j^U}$ and $\norm{\cdot}_{\Omega_j^K}$.
The latter norm needs an extension in the current work, so it will be reviewed in some detail here.
It is defined in terms of
positive parameters $r$, $A$, $L$, $\kappa_L$, $c_2$, $c_4$, $c_w$, $h$,
which will essentially be fixed as in \cite{dgauss1} in Section~\ref{sec:parameters} below.
The definition of the norms involves the regulator $G_j$, which is a weight defined for $X \in \cP_j$ and $\varphi\in\R^{\Lambda_N}$ by
\begin{equation}\label{eq:G_j}
G_j (X, \varphi) = \exp \bigg( \kappa_L \bigg( \norm{\nabla_j \varphi}^2_{L_j^2 (X)} + c_2 \norm{\nabla_j \varphi}^2_{L_j^2 (\partial X)} + \sum_{B\in \cB_j (X)} \norm{\nabla_j^2 \varphi}^2_{L^{\infty}(B^*)}  \bigg)  \bigg),
\end{equation}
where $\cB_j (X)$ is the set of $j$-blocks constituting $X$,
$\partial X$ denotes the inner $\ell^1$-vertex boundary of $X$,
and with the relevant $L^p$-norms as introduced in \cite[Definition~\ref{def:L-p-norms}]{dgauss1}.
The semi-norms and norms on polymer activities are then given by (cf.~\cite[Definition~\ref{def:NORM}]{dgauss1})
\begin{align}
& \norm{D^n F (X, \varphi)}_{n, T_j (X, \varphi)} = \sup \Big\{ D^n F(X, \varphi) (f_1, \cdots, f_n) : \norm{f_k}_{C_j^2 (X^*)} \leq 1 \,\, \forall k \Big\} \\
& \norm{F(X, \cdot)}_{h, T_j (X)} = \sup_{\varphi \in \R^{\Lambda_N}} G_j(X, \varphi)^{-1} \sum_{n=0}^{\infty} \frac{h^n}{n!} \norm{D^n F (X, \varphi)}_{n, T_j (X, \varphi)}  \\
& \norm{F}_{\Omega_j^K} 
= \norm{F}_{h, T_j} = \sup_{X\in \cP_j^c } A^{|X|_j} \norm{F(X, \cdot)}_{h, T_j (X)} . \label{eq:norm-Omega-Z2}
\end{align}

We will also need the following somewhat more technical properties of the norms and regulators.
For $X\in \cP_j$ and $\varphi\in\R^{\Lambda_N}$,
recall the definition $w_j (X, \varphi)^2 = \sum_{B\in \cB_j (X)} \max_{n=1,2} \norm{\nabla_j^n \varphi}^2_{L^{\infty} (B^*)}$ and
then that of the strong regulators
\begin{equation}\label{eq:strong-reg}
\exp \Big( c_w \kappa_L w_j (X, \varphi)^2 \Big), \quad g_{j} (X, \varphi) = \exp\Big( c_4 \kappa_L \sum_{a=0,1,2} W_{j} (X, \nabla^a_j \varphi)^2 \Big),
\end{equation}
where $W_j (X, \nabla_j^a\varphi)^2 = \sum_{B\in \mathcal{B}_j (X)} \norm{\nabla_j^a\varphi}_{L^{\infty}(B^*)}^2$.
For sharp integrability estimates, we subdecomposed in \cite[Section~\ref{sec:subscale}]{dgauss1}
each scale $j$ into  $M$ fractional scales $j+s$ with $s=0, \dots, 1-1/M$ when $L = \ell^M$ with $\ell$ an integer.
Each covariance  $\Gamma_{j+1}$  from the finite-range decomposition \eqref{e:Cdecomp-Z2}
has the corresponding subdecomposition
\begin{equation}
  \Gamma_{j+1}= \Gamma_{j,j+1/M}+\cdots +\Gamma_{j+(M-1)/M,j+1}.
  \label{eq:Gamma_subdecomposition}
\end{equation}
The regulators $G_{j+s}$ and the strong regulators $g_{j+s}$ are also defined on these fine scales, see \cite[\eqref{eq:W_j}]{dgauss1} and analogously for $G_{j+s}$.
The crucial property of $G_{j+s}$ and $g_{j+s}$ is stated in the next lemma, which is an extension of \cite[Lemma~\ref{lemma:G_change_of_scale}]{dgauss1} and proved in Appendix~\ref{app:prop_of_reg_with_ext_field}.
The fields $\xi_o$, $\xi_B$ appearing in the next lemma will correspond in practice to shifts induced by the external fields.

\begin{lemma} \label{lemma:G_change_of_scale_external_field}
For $X\in \cP_{j+s}$ and $\varphi$, $\xi_o$, $\xi_B \in \R^{\Lambda_N}$ for each $B\in \cB_{j+s} (X)$, define
\begin{align}
& \log G_{j+s} (X, \varphi, \xi_o, (\xi)_{B\in \cB_{j+s} (X)})  \\
& = \kappa_L \norm{\nabla_{j+s} (\varphi + \xi_o)}_{L_{j+s}^2 (X)}^2 + \kappa_L c_2 \norm{\nabla_{j+s} (\varphi + \xi_o)}_{L_{j+s}^2 (\partial X)}^2 + \kappa_L \sum_{B\in \cB_{j+s} (X)} \norm{\nabla_{j+s}^2 (\varphi + \xi_B)  }^2_{L^{\infty} (B^*)} . \nonumber
\end{align}
Assume $0\leq j < N$, $L=\ell^M$.
For any choice of $c_2$ small enough compared to $1$, there exist $c_4=c_4(c_2)$ and an integer $\ell_0=\ell_0(c_1,c_2)$ (both large), such that for all $\ell \geq \ell_0$, $M \geq 1$, $s \in \{0,\frac1M,\dots,1-\frac1M\}$ and $\kappa_L > 0$,
for $X\in \cP_{j+s}^c$,
\begin{equation}\label{eq:G_change_of_scale_variant_form}
G_{j+s} (X, \varphi , \xi_o, (\xi_B)_{B\in \cB_{j+s} (X)}) \leq \max_{\mathfrak{a} \in \{o\} \cup \cB_{j +s} (X)} g_{j+s}( X_{s+ M^{-1}}, \xi_{\mathfrak{a}} ) G_{j+s+M^{-1}} ( X_{s+ M^{-1}}, \varphi).
\end{equation}
and $X_{s+M^{-1}}$ is the smallest $(j+s+M^{-1})$-polymer containing $X$ (see \cite[Section~\ref{sec:subscale}]{dgauss1}).
\end{lemma}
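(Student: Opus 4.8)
The statement to prove is Lemma~\ref{lemma:G_change_of_scale_external_field}, an extension of the ``change of scale'' estimate \cite[Lemma~\ref{lemma:G_change_of_scale}]{dgauss1} that tracks independent shifts $\xi_o$ (for the $L^2$ pieces) and $\xi_B$ (for each block's $L^\infty$ second-derivative piece). The plan is to reduce each term on the right-hand side of $\log G_{j+s}(X,\varphi,\xi_o,(\xi_B))$ to a corresponding term at the finer scale $j+s+M^{-1}$ by first pulling the shift out of the regulator using a crude Cauchy–Schwarz/Young splitting $\|\nabla(\varphi+\xi)\|^2 \le 2\|\nabla\varphi\|^2 + 2\|\nabla\xi\|^2$ (and similarly for the $L^\infty$ term), and then absorbing the $2\|\nabla\varphi\|^2$-type part into $G_{j+s+M^{-1}}(X_{s+M^{-1}},\varphi)$ by the original scaling lemma, while the $2\|\nabla\xi\|^2$-type parts get bundled into the strong regulator $g_{j+s}(X_{s+M^{-1}},\xi_{\mathfrak a})$ for the relevant $\mathfrak a$. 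The key arithmetic is that the scale-change gains in \cite[Lemma~\ref{lemma:G_change_of_scale}]{dgauss1} come with a large factor (a power of $\ell$) coming from the volume/geometry change between $(j+s)$-blocks and $(j+s+M^{-1})$-blocks, and this gain must dominate both the factor $2$ from the Young inequality and the constants $c_2, c_w, c_4$; this is exactly why $\ell_0$ is chosen large depending on $c_1, c_2$ and why $c_4 = c_4(c_2)$ is chosen afterward.

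Concretely, I would first recall from \cite{dgauss1} the precise form of the scaling estimate for $G$ without external field: there is $\eta = \eta(\ell) \to 0$ (a power of $\ell^{-1}$) such that the $(j+s)$-scale quadratic form controlling $\log G_{j+s}(X,\cdot)$ is bounded by $\eta$ times the $(j+s+M^{-1})$-scale quadratic form on $X_{s+M^{-1}}$, plus (for the boundary and $L^\infty$ terms) analogous comparisons; this is where the geometric gain lives. Then for each of the three terms in $\log G_{j+s}(X,\varphi,\xi_o,(\xi_B))$ I apply $\|a+b\|^2 \le (1+\delta)\|a\|^2 + (1+\delta^{-1})\|b\|^2$ with a fixed small $\delta$, route the $\varphi$-part through the scaling estimate, and route the $\xi$-part into a strong regulator. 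The only subtlety is matching the \emph{maximum} structure on the right-hand side: the $L^2(X)$ and $L^2(\partial X)$ pieces produce a $\xi_o$-contribution, which must be dominated by $g_{j+s}(X_{s+M^{-1}},\xi_o)$, while each block's $L^\infty$-piece $\|\nabla^2_{j+s}(\varphi+\xi_B)\|^2_{L^\infty(B^*)}$ produces a $\xi_B$-contribution that must be dominated by $g_{j+s}(X_{s+M^{-1}},\xi_B)$ — and since $g$ is a product over blocks while the $\xi_B$-term is for a single block $B$, each individual block's contribution is bounded by the \emph{full} $g_{j+s}(X_{s+M^{-1}},\xi_B)$ (which only makes the bound weaker, hence valid), so replacing the sum of such terms over $B$ by $\max_{\mathfrak a} g_{j+s}(X_{s+M^{-1}},\xi_{\mathfrak a})$ raised to the number of blocks — no: one instead uses that $\sum_B (\cdots) \le |\cB_{j+s}(X)| \max_B(\cdots)$ and that the combinatorial factor $|\cB_{j+s}(X)|$ is reabsorbed by the $\ell$-gain in $\eta$, exactly as in \cite{dgauss1}. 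I would present this term-by-term, deferring the raw constants to the earlier lemma.

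The main obstacle I anticipate is the bookkeeping of constants: one must verify that after pulling out the shifts with a fixed $\delta$, the residual $\varphi$-coefficients still fit inside $G_{j+s+M^{-1}}$ with coefficient $\le 1$ (i.e. the $(1+\delta)\eta(\ell)$ factor is $\le 1$ for $\ell \ge \ell_0$), and \emph{simultaneously} that the coefficients $c_w, c_4$ in the strong regulators \eqref{eq:strong-reg} are large enough to absorb the $(1+\delta^{-1})$ factors times the geometric constants coming from comparing $\|\nabla_{j+s}\xi\|$ on $X$ with the block structure of $X_{s+M^{-1}}$. Since $c_4$ is allowed to depend on $c_2$ and $\ell_0$ on $(c_1,c_2)$, there is enough freedom, but the order of quantifiers must be respected: fix $c_2$ small; this fixes the $\ell$-gain rate and the geometric constants; then choose $c_4$ large relative to those; then choose $\ell_0$ large enough that $(1+\delta)\eta(\ell) \le 1$. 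Beyond this, the argument is a routine adaptation of \cite[Lemma~\ref{lemma:G_change_of_scale}]{dgauss1}, and I would explicitly point the reader there for the parts that are unchanged, keeping the new content confined to the shift-extraction step.
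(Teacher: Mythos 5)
Your plan has a genuine gap at its central step, namely how the shift is pulled out of the \emph{leading} term $\norm{\nabla_{j+s}(\varphi+\xi_o)}^2_{L^2_{j+s}(X)}$. You propose the Young-type splitting $\norm{a+b}^2 \le (1+\delta)\norm{a}^2 + (1+\delta^{-1})\norm{b}^2$ and then plan to absorb the factor $(1+\delta)$ multiplying $\norm{\nabla\varphi}^2_{L^2(X)}$ by the ``$\ell$-gain from the scale-change lemma.'' But there is no such gain on the leading $L^2(X)$ term: in $d=2$ the normalised Dirichlet form $\norm{\nabla_{j+s}\varphi}^2_{L^2_{j+s}(X')}$ is invariant under passing from scale $j+s$ to scale $j+s+M^{-1}$, whereas only the boundary term picks up a factor $\ell^{-1}$ and the second-derivative $W$-term a factor $\ell^{-2}$. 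Since the coefficient of the leading $L^2$ term in $G_{j+s+M^{-1}}$ is exactly the same as in $G_{j+s}$, any excess factor $(1+\delta)>1$ on that term is fatal, no matter how large $\ell_0$ or $c_4$ is chosen. Your assertion that ``the $(j+s)$-scale quadratic form controlling $\log G_{j+s}$ is bounded by $\eta(\ell)$ times the $(j+s+M^{-1})$-scale form'' is simply false for this one term, which is precisely the one you may not damage.

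The paper's proof (Appendix~\ref{app:prop_of_reg_with_ext_field}) handles the cross term $(\nabla\varphi,\nabla\xi_o)_X$ differently, by discrete \emph{integration by parts} rather than by Young/Cauchy--Schwarz. Integration by parts moves the $\varphi$-part of the cross term off the leading $L^2(X)$ norm and onto the boundary term $\norm{\nabla_{j+s}\varphi}^2_{L^2_{j+s}(\partial X)}$ and the second-derivative term $W_{j+s}(X,\nabla^2_{j+s}\varphi)^2$, which are exactly the two pieces that do enjoy $\ell^{-1}$ and $\ell^{-2}$ gains and can therefore absorb the AM--GM constant $\tau^{-1}$ upon taking $\ell_0$ large (and $\tau=c_1c_2^{-1}$). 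Correspondingly, the $\xi_o$-part of the cross term lands on $\norm{\xi_o}^2$ (no derivative) and on $W_{j+s}(X,\xi_o)^2$, both of which go into the strong regulator $g_{j+s}$ provided $c_4=c_4(c_2)$ is taken large enough. Your crude $2\norm{a}^2+2\norm{b}^2$ splitting is in fact what the paper does for the boundary and $L^\infty$ terms of $\log G_{j+s}$, where it is harmless because those terms do have scale gains; but for the leading $L^2(X)$ term the integration-by-parts trick is essential, and it is the one new idea (relative to the shift-free scale-change lemma) that your outline omits. The subtlety you raise about the maximum over $\mathfrak{a}$ is secondary and would also need tightening, but it is not the main gap.
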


\subsection{Norms and regulators with external field}

%

To incorporate the effect of the scale-dependent external fields, we need an extension of the norms and regulators that take 
the external field into account.
The following definition introduces modified regulators that effectively control the polymer activities perturbed by the external fields $(u_j)$.

\begin{definition} \label{def:Psi_regulator}
Given $(u_j)_j$ satisfying \ref{assump:u}, define the $\Psi$-regulators (cf.~\eqref{eq:G_j})
\begin{multline}
G_j^{\Psi} ( X, \varphi ; \, u_j) \\= \sup_{t \in [0,1]} \exp\Big( \kappa_L \norm{\nabla_j (\varphi + tu_j) }^2_{L^2_j (X)} + c_2 \kappa_L \norm{\nabla_j (\varphi + tu_j)}^2_{L^2_j (\partial X)} + \kappa_L W_j^{\Psi} (X, \varphi ; \, u_j)^2  \Big) 
\end{multline}
where
\begin{align}
W_j^{\Psi} (X, \nabla_j^2 \varphi ; \, u_j)^2  = \sum_{B\in \cB_j (X)} \sup_{t_B \in [0,1]} \norm{ \nabla_j^2 (\varphi + t_B u_j) }_{L^{\infty} (B^*)}^2 .
\end{align}
The dependence on $u_j$ will often be hidden.
\end{definition}
\begin{remark}
The main motivation for $G_j^{\Psi}$ is to have $\sup_{t \in [0,1]} G_j (X, \varphi + tu_j) \leq G_j^{\Psi} (X, \varphi)$ and hence
\begin{align}
\norm{K(X, \varphi + tu_j)}_{h, T_j (X, \varphi )} \leq \norm{K(X)}_{h, T_j(X)} G_j^{\Psi} (X, \varphi), \quad t \in[0,1],  \label{eq:Psi-regulator_key_property1}
	.
\end{align}
Note that we could not use $\sup_{t \in [0,1]} G_j (X, \varphi + tu_j)$ for $G_j^{\Psi}$ because this definition does not factorise into connected components, i.e.,
\begin{align}\label{eq:nofact}
\sup_{t \in [0,1]} G_j (X \cup Y, \varphi + tu_j) \neq \sup_{t_1, t_2 \in [0,1]} G_j (X , \varphi + t_1 u_j) G_j (Y , \varphi + t_2 u_j) 
\end{align}
if $X \not\sim Y = \emptyset$ but $X^* \cap Y^* \neq \emptyset$.
 This is why we introduced the $W_j^{\Psi}$.

Also note that since $\norm{\nabla_j u_j}^2_{L_j^2 (X)}$, $\norm{\nabla_j u_j}^2_{L_j^2 (\partial X)}$, $W_j (X, \nabla^2_j u_j)$ are each bounded by some multiple of $\norm{u_j}^2_{C_j^2}$,
in particular, 
there exists finite $C >0$, 
independent of $X$,  such that,
under \ref{assump:u},
\begin{align}
G_j^{\Psi} (X, 0 ; u_j) \leq C . \label{eq:G_j^Psi_zero_point_bound}
\end{align}
\end{remark}

The following are the key properties of $G_j^{\Psi}$ (cf.\ the properties of $G_j$ in
\cite[Section~\ref{sec:norms}]{dgauss1}).

\begin{proposition} 
\label{prop:key_prop_G_j^Psi} 
Let $(u_j)_j$ satisfy \ref{assump:u}.
Then there exists $C_{\Psi} >0$
such that for $L$
{as in the assumption of Lemma~\ref{lemma:G_change_of_scale_external_field}}
and sufficiently small $c_2$, $c_w >0$, $(G_j^{\Psi})_{j\geq 0} \equiv (G_j^{\Psi} (\cdot ; u_j))_{j\geq 0}$ satisfies for each $(X, \varphi) \in \cP_j \times \R^{\Lambda_N}$, $j \geq 0$, 
\begin{itemize}
\item[\textnormal{(1)}] $G_j^{\Psi} (X, \varphi) \geq G_j (X, \varphi)$, 
\item[\textnormal{(2)}] $G_j^{\Psi} (X, \varphi) = \prod_{Y\in \textnormal{Comp}_j(X)} G_j^{\Psi} (Y, \varphi)$,
\item[\textnormal{(3)}] $e^{c_w \kappa_L w_j(X, \varphi + tu_j)^2} G_j^{\Psi} (Y, \varphi) \leq C_{\Psi} G_j^{\Psi} (X \cup Y, \varphi)$ if $X\cap Y = \emptyset$ and $t \in [0,1]$,
\item[\textnormal{(4)}] $\Eplus [G_j^{\Psi} (X, \varphi' + \zeta)] \leq C_{\Psi} 2^{|X|_j} G_{j+1} (X, \varphi')$ for all $\varphi' \in \R^{\Lambda_N}$.
\end{itemize}
\end{proposition}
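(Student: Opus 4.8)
\textbf{Proof proposal for Proposition~\ref{prop:key_prop_G_j^Psi}.}

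The plan is to establish the four properties in order, since later ones build on earlier ones, and to treat the key comparison $\sup_{t}G_j(X,\varphi+tu_j)\le G_j^\Psi(X,\varphi)$ as the organising principle. Property (1) is immediate from the definition: taking $t=0$ inside the supremum defining $G_j^\Psi$ recovers exactly $\log G_j(X,\varphi)$, so $G_j^\Psi \ge G_j$; one also notes the slightly stronger fact that $G_j^\Psi(X,\varphi)\ge \sup_{t\in[0,1]}G_j(X,\varphi+tu_j)$, which is the point of the construction and which gives the key property \eqref{eq:Psi-regulator_key_property1} advertised in the remark (the reason $W_j^\Psi$ was introduced with a separate supremum $t_B$ per block is precisely so that (2) holds, unlike the naive $\sup_t G_j(X,\varphi+tu_j)$, cf.\ \eqref{eq:nofact}).

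For property (2), the factorisation over connected components, I would check each of the three terms in $\log G_j^\Psi$ separately. The bulk term $\|\nabla_j(\varphi+tu_j)\|^2_{L^2_j(X)}=\sum_{Y}\|\nabla_j(\varphi+tu_j)\|^2_{L^2_j(Y)}$ splits over components $Y\in\mathrm{Comp}_j(X)$ with a \emph{single} $t$, and since the supremum of a sum of nonnegatively-coupled-through-one-parameter quantities is $\le$ the sum of suprema, while conversely picking the best $t$ in each component only helps, we must be careful: actually the bulk $L^2_j(X)$ term does \emph{not} obviously factorise if we keep a common $t$. The resolution is that in $G_j^\Psi$ the supremum over $t$ for the bulk and boundary terms is still a \emph{single} $t$, so to get equality in (2) one uses that $u_j$ is supported in one block $B_0$ (assumption \Au{}), hence $u_j$ meets at most one component $Y_0\in\mathrm{Comp}_j(X)$; on all other components $\nabla_j(\varphi+tu_j)=\nabla_j\varphi$ is $t$-independent, and on $Y_0$ the supremum is attained freely. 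Thus $\log G_j^\Psi(X,\varphi)=\sum_{Y\ne Y_0}\log G_j(Y,\varphi)+\log G_j^\Psi(Y_0,\varphi)=\sum_{Y}\log G_j^\Psi(Y,\varphi)$ using (1) with equality on the $t$-independent components. The $W_j^\Psi$ term factorises trivially since it is already a sum over blocks with independent $t_B$.

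For property (3) I would mimic the proof of the analogous statement for $G_j$ in \cite[Section~\ref{sec:norms}]{dgauss1}: expand $w_j(X,\varphi+tu_j)^2=\sum_{B\in\cB_j(X)}\max_{n=1,2}\|\nabla_j^n(\varphi+tu_j)\|^2_{L^\infty(B^*)}$ and absorb it into the $W^\Psi_j$-part of $G_j^\Psi(X\cup Y,\varphi)$ using $c_w$ small relative to $\kappa_L$-coefficients, picking up the constant $C_\Psi$ from the overlap of small-set neighbourhoods $B^*$ with $Y$ and from using \eqref{eq:G_j^Psi_zero_point_bound}-type bounds on the pieces of $u_j$ that spill across; the supremum over the common $t\in[0,1]$ in $w_j(X,\varphi+tu_j)$ is dominated by the per-block suprema $t_B$ appearing in $W_j^\Psi(X\cup Y,\varphi)$, which is exactly why $W_j^\Psi$ was defined with independent $t_B$. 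Property (4), the contraction/integration estimate, is the main obstacle and the real content: I would reduce it to Lemma~\ref{lemma:G_change_of_scale_external_field} via the fractional-scale decomposition \eqref{eq:Gamma_subdecomposition}, writing $\Eplus=\E_{\Gamma_{j,j+1/M}}\cdots\E_{\Gamma_{j+(M-1)/M,j+1}}$ and applying \eqref{eq:G_change_of_scale_variant_form} at each fractional step with the fields $\xi_o,\xi_B$ taken to be (the relevant truncations of) $tu_j$, so that the $t$-suprema built into $G_j^\Psi$ feed into the $g_{j+s}(\cdot,\xi_{\mathfrak a})$ factors; one then needs that $\E_{\Gamma_{j+s,j+s+1/M}}[g_{j+s}(X_{s+M^{-1}},\zeta)]$ is bounded (this is the sharp integrability input, present in \cite{dgauss1} for $g$, the exponential of a squared $L^\infty$-type norm against a Gaussian of the right range — finite provided $c_4\kappa_L$ is small enough, which fixes the smallness of $\kappa_L$), while the $g_{j+s}(X_{s+M^{-1}},\xi_{\mathfrak a})$ factors with $\xi_{\mathfrak a}=tu_{j+s}$, $t\in[0,1]$, are bounded by $C(M_u)$ using \Au{} as in \eqref{eq:G_j^Psi_zero_point_bound}. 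Composing over the $M$ fractional scales turns the per-step constant into $C_\Psi$ and the per-step loss $2^{1/M}$ per block into the factor $2^{|X|_j}$, giving (4); the care needed is in tracking the block-counting $|X|_j$ through the coarsening $X\mapsto X_{s+M^{-1}}$ and in ensuring the choices of $c_2,c_w,c_4,\kappa_L,L$ are mutually consistent with those required by Lemma~\ref{lemma:G_change_of_scale_external_field}, which is why the statement only claims existence of small $c_2,c_w$ and large $L$ rather than explicit values.
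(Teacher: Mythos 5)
Your proposal follows the paper's overall strategy, and for items (1), (2), (4) it is essentially the same argument (indeed for (2) you are more explicit than the paper, which just declares (1)--(2) clear: the observation that \Au{} confines $u_j$ and its derivatives to a single block $B_0$ and hence to at most one component $Y_0$, so that the single $\sup_t$ over bulk and boundary terms only ``sees'' $Y_0$, is exactly why factorisation survives despite the common $t$, and is worth recording). Your description of (4) via the fractional-scale covariance subdecomposition \eqref{eq:Gamma_subdecomposition}, the iterated application of Lemma~\ref{lemma:G_change_of_scale_external_field}, integrability of the strong regulators $g_{j+s}$ against the Gaussian fragments, and the uniform bound on the $u_j$-dependent $g$-factors via \Au{}, is the paper's proof up to minor imprecision (there is no $u_{j+s}$ in the argument, only $u_j$, and one first peels off $g_j(X_{M^{-1}},\xi_1+tu_j)\le g_j(X_{M^{-1}},\xi_1)^2g_j(X_{M^{-1}},u_j)^2$ before integrating, which is why the quadratic moment $\E[g^2]$ appears).

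There is, however, a genuine gap in your treatment of (3). You propose to ``expand $w_j(X,\varphi+tu_j)^2$ and absorb it into the $W_j^\Psi$-part of $G_j^\Psi(X\cup Y,\varphi)$,'' but this cannot work as stated: $w_j$ is a $\max_{n=1,2}$ over first \emph{and} second discrete gradients in $L^\infty(B^*)$, while $W_j^\Psi$ only controls second gradients. The first-derivative $L^\infty$ content of $w_j$ has nowhere to go in $W_j^\Psi$. What the paper actually does is (i) use a Taylor/interpolation estimate $|\nabla_j^{\mu}\varphi(x)|\le|\nabla_j^{\mu}\varphi(x_0)|+C\|\nabla_j^2\varphi\|_{L^\infty(B^*)}$ to convert $\|\nabla_j\varphi\|_{L^\infty(B^*)}^2$ into $\|\nabla_j\varphi\|^2_{L^2_j(B)}+\|\nabla^2_j\varphi\|^2_{L^\infty(B^*)}$, so the first-derivative piece is absorbed into the bulk $L^2_j$ term of $G_j^\Psi(X\cup Y)$ rather than the $W_j^\Psi$ term; and (ii) handle the mismatch of boundary sets $\partial Y$ versus $\partial(X\cup Y)$ using the discrete Sobolev trace theorem (\cite[Corollary~\ref{cor:discrete_trace_theorem}]{dgauss1}), exploiting the smallness of $c_2$. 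Only after these two reductions, taking $\sup_{t'}$ and using \Au{} to bound the leftover $C c_w\kappa_L\|u_j\|_{C_j^2}^2$ term, does one obtain (3) with $C_\Psi\ge\exp(Cc_w\kappa_L\|u_j\|_{C_j^2}^2)$. As written your sketch omits both steps, and ``absorbing into $W_j^\Psi$'' would fail on the first-derivative and boundary discrepancies.
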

\begin{proof}
By definition of $G_j^{\Psi}$, properties (1) and (2) are clear.
For (3), first observe from the definition of $w_j (X, \varphi)$ (see above \eqref{eq:strong-reg}) that for each $t' \in [0,1]$ and some geometric constant $C > 0$, 
\begin{align}
w_j (X, \varphi + t u_j)^2 & \leq 2 \sum_{B\in \cB_j (X)}  \Big(  \norm{\nabla_j (\varphi + t' u_j)}_{L^{\infty} (B^*)}^2 + (t- t')^2 \norm{\nabla_j u_j}_{L^{\infty} (B^*)}^2 \\
& \qquad \qquad \qquad + \norm{\nabla_j^2 (\varphi + t' u_j ) }_{L^{\infty} (B^*)}^2  + (t-t')^2 \norm{\nabla_j^2 u_j}_{L^{\infty} (B^*)} \Big)  \nnb
& \leq 2  \sum_{B\in \cB_j (X)}  \big( \norm{\nabla_j (\varphi + t' u_j)}_{L^{\infty} (B^*)}^2 + \norm{\nabla_j^2 (\varphi + t' u_j )}_{L^{\infty} (B^*)}^2 \big) + C  \norm{u_j}_{C_j^2}^2. \label{eq:key_prop_G_j^Psi1} 
\end{align}
We then note that for any $B\in \cB_j (X)$, $x_0 \in B$ and $x\in B^*$, there is another constant $C> 0$ such that
\begin{equation}
| \nabla_j^{ \mu} \varphi (x) | \leq |\nabla_j^{ \mu}  \varphi (x_0)| + C \norm{\nabla_j^2 \varphi}_{L^{\infty} (B^*)},
\end{equation}
for all $\mu \in \hat{e}$ (for example, cf.~\cite[\eqref{eq:gradL_infty}]{dgauss1}, applied to $f=\nabla_j^{\mu} \varphi$ and recall that $x_0$ and $x$ belong to some small set $X$, whence $|X|_j \leq C$)
and hence 
$\norm{\nabla_j \varphi}_{L^{\infty} (B^*)}^2 \leq 2 { \max_{\mu \in \hat{e}}} | \nabla^{ \mu} \varphi(x_0) |^2 + 2C^2 \norm{\nabla_j^2 \varphi}_{L^{\infty} (B^*)}^2$. 
Summing over all $x_0 \in B$, this implies
\begin{equation}
\norm{\nabla_j \varphi}^2_{L^{\infty} (B^*)} \leq 2 \norm{\nabla_j \varphi}^2_{L_j^2 (B)} +2 C^2 \norm{\nabla_j^2 \varphi}_{L^{\infty} (B^*)}^2.
\end{equation}
Plugging this into \eqref{eq:key_prop_G_j^Psi1}, we get
\begin{equation}
c_w w_j (X, \varphi + tu_j)^2  \leq C c_w  \norm{u_j}_{C_j^2}^2  + \frac{1}{2}  \norm{\nabla_j (\varphi + t' u_j)}^2_{L_j^2 (X)} + \frac{1}{2} W_j^{\Psi} (X, \nabla_j^2 \varphi)^2 \label{eq:key_prop_G_j^Psi2} 
\end{equation}
for $c_w$ sufficiently small. The discrepancy between the left- and right-hand sides of item (3) of the statement of the proposition
due to the boundary term of $G_j$ can be treated by the discrete Sobolev trace theorem \cite[Corollary~\ref{cor:discrete_trace_theorem}]{dgauss1}, which shows that there is $C>0$ such that
\begin{equation}
c_2 \norm{\nabla_j (\varphi + t' u_j)}_{L_j^2 (\partial Y \backslash \partial (X \cup Y) )}^2 \leq 
C c_2 \Big( \norm{\nabla_j (\varphi + t' u_j)}_{L_j^2 (X)}^2 + \norm{\nabla_j^2 (\varphi + t' u_j)}_{L^{\infty} (X) }^2  \Big),
\end{equation}
so if $c_2$ is sufficiently small so that $C c_2 \leq \frac{1}{2}$, then this together with \eqref{eq:key_prop_G_j^Psi2} gives
\begin{align}
& c_w w_j (X, \varphi + tu_j)^2 + \norm{\nabla_j (\varphi + t' u_j)}^2_{L_j^2 (Y)} + c_2 \norm{\nabla_j (\varphi + t' u_j)}^2_{L_j^2 (\partial Y)} + W_j^{\Psi}(Y, \nabla_j^2 \varphi)^2 \\
& \leq C c_w \norm{u_j}^2_{C_j^2} + \norm{\nabla_j (\varphi + t' u_j)}^2_{L_j^2 (X \cup Y)} + c_2 \norm{\nabla_j (\varphi + t' u_j)}^2_{L_j^2 (\partial (X\cup Y))} + W_j^{\Psi}(X\cup Y, \nabla_j^2 \varphi)^2. \nonumber
\end{align}
After taking supremum over $t'$,
it follows that (3) holds for any $C_{\Psi} \geq \exp( C c_w \kappa_L \norm{u_j}_{C_j^2}^2)$, and $C_{\Psi}$ can be chosen independent of $j$ because of \ref{assump:u}.

For (4), we may assume that $j\leq N-2$, since $\Gamma_N^{\Lambda_N}$ satisfies the same estimates as $\Gamma_N$.
We use the regulator decomposition: by Lemma~\ref{lemma:G_change_of_scale_external_field},
\begin{align}
G_j^{\Psi} (X, \varphi' + \zeta  ; \, u_j) \leq \prod_{k=1}^M \sup_{t\in [0,1]} g_{j+ \frac{k-1}{M}} (X_{k/M}, \xi_k + 1_{k=1}t u_j ) G_{j+1} (\bar{X}, \varphi') \label{eq:G_j^Psi_quad_bound}
\end{align}
whenever $\zeta = \sum_{k} \xi_k$ and $X_{k/M}$ is the smallest polymer in $\cP_{j+k/M}$ containing $X\in \cP_j$ and $\overline{X}=X_1$.
Using the covariance subdecomposition \eqref{eq:Gamma_subdecomposition}, we may decompose
$\zeta \sim \cN(0, \Gamma_{j+1})$ as the sum of independent $\xi_k \sim \cN (0, \Gamma_{j+k/M, j+(k+1)/M} )$. Then each $\E^{\xi_k} [g_{j+ (k-1)/M} (X_{k/M}, \xi_k + 1_{k=1}t u_j ) ]$ are bounded using \cite[Lemma~\ref{lemma:g_j+s_bound_by_quadratic}]{dgauss1}. For $k=1$, we have from the definition of $g_{j}$ that
\begin{equation}
g_{j} (X_{M^{-1}}, \xi_1 + t u_j) \leq g_{j} (X_{M^{-1}}, \xi_1)^2 g_{j} (X_{M^{-1}}, u_j)^2 \leq g_{j} (X_{M^{-1}}, \xi_1)^2 e^{c \kappa_L \norm{u_j}_{C_j^2}}
\end{equation}
for some $c>0$. Also for any $k\in \{1,\dots, M\}$, \cite[Lemma~\ref{lemma:g_j+s_bound_by_quadratic}]{dgauss1} gives
\begin{equation}
\E^{\xi_k}[g_{j+ (k-1)/M} (X_{k/M}, \xi_k )] \leq \E^{\xi_k}[g_{j+ (k-1)/M} (X_{k/M}, \xi_k  )^2] \leq 2^{|X|_j/M}
\end{equation}
with the choice of $L$ and $\ell$ as in Lemma~\ref{lemma:G_change_of_scale_external_field} (cf. \cite[Appendix~\ref{app:lem_G_change_of_scale}]{dgauss1}). Therefore
\begin{equation}
\Eplus [ G_j^{\Psi} (X, \varphi' + \zeta) ] \leq e^{c \kappa_L \norm{u_j}_{C_j^2} } 2^{|X|_j} G_{j+1} (\bar{X}, \varphi')
\end{equation}
which implies the claim with the same choice of $C_\Psi$ as in (3).
\end{proof}

Next we define a norm corresponding to the $\Psi$-regulators.
This norm is defined in the same way as the $\|\cdot\|_{h,T_j}$-norm in \eqref{eq:norm-Omega-Z2} except that there is, apart from the use of  $G_{j}^{\Psi}$ instead of $G_{j}$, also a change of the
parameter $A$ (large-set regulator) from $A$ to $A/2$. This is to compensate a combinatorial factor coming from reblocking in the next section, which will not significantly affect the resulting estimates.

\begin{definition}
Define, for $\Psi_j : \cP_j \times \R^{\Lambda_N} \rightarrow \R$ such that $\Psi_j (X) = \prod_{Y\in \textnormal{Comp}_j (X)} \Psi_j (Y)$,
\begin{align}
& \norm{\Psi_j (X)}_{h, T_j^{\Psi} (X) } = \sup_{\varphi} G_{j}^{\Psi} (X, \varphi)^{-1} \norm{\Psi_j (X, \varphi)}_{h, T_j (X, \varphi)}  \label{eq:Omega_j^Psi_norm_def_X} \\
& \norm{\Psi_j}_{h, T_j^{\Psi}} = \sup_{ X \in \cP_j^c} (A/2)^{|X|_j} \norm{\Psi_j (X)}_{h, T_j^{\Psi} (X) } . \label{eq:Omega_j^Psi_norm_def} 
\end{align}
\end{definition}

\subsection{Contraction estimates}

This short section can be regarded as an extension of \cite[Section~\ref{sec:inequalities}]{dgauss1}, but some results are now generalized to apply to the norm $\norm{\cdot}_{h, T_j^{\Psi}(X)}$.
In the following we write
\begin{align}
G^*_j (X, \varphi) = \begin{array}{ll}
\begin{cases}
G_j (X, \varphi) & \text{if} \;\; * = 0 \\
G_j^{\Psi} (X, \varphi) &  \text{if} \;\; * = \Psi.
\end{cases}
\end{array}
\end{align}
Note that by applying Proposition~\ref{prop:key_prop_G_j^Psi} to both $u_j$ and $u_j'\equiv0$ (which also satisfies \ref{assump:u}), one obtains that 
\begin{equation}\label{eq:analogue-Prop5.9}
\Eplus [G_j^* (X, \varphi' + \zeta)] \leq 
C_{\Psi} 2^{|X|_j} G_{j+1} (X, \varphi'), \quad \text{ for both $* \in \{0, \Psi \}$.} 
\end{equation}
We also use the notation $\norm{\cdot}_{h, T_j^*(X)}$ for either $\norm{\cdot}_{h, T_j (X)}$ or $\norm{\cdot}_{h, T_j^{\Psi} (X)}$ and $\norm{\cdot}_{h, T_j^*}$ for either $\norm{\cdot}_{h, T_j}$ or $\norm{\cdot}_{h, T_j^{\Psi}}$ when $*= 0$ or $\Psi$, respectively. 

Below, we refer to $2\pi /\sqrt{\beta}$-periodic polymer activities to be the functions $F(X, \varphi)$ such that $t \mapsto F(X, \varphi +t)$ is $2\pi /\sqrt{\beta}$-periodic, see \cite[Definition~\ref{def:chargedecomp}]{dgauss1}. Then its charge-$q$ part is defined by the Fourier expansion
\begin{align}
F(X, \varphi + t) = \sum_{q\in \Z} e^{i \sqrt{\beta} qt} \hat{F}_q (X, \varphi), \quad t\in \R,
\end{align}
and $F$ is called neutral if $F = \hat{F}_0$.
Recall that the norm $\norm{\cdot}_{h, T_j^* (X)}$ in \eqref{eq:Omega_j^Psi_norm_def_X} depends implicitly on a choice of $u=(u_j)_j$ and the notion of small sets $\mathcal{S}_j$ at scale $j$ from \cite[Section~\ref{sec:polymersdef0}]{dgauss1}.

\begin{proposition} \label{prop:contraction_estimates_external_field}
Let $X \in \mathcal{S}_j$, and let $F$ be a $2\pi/\sqrt{\beta}$-periodic polymer activity such that $\norm{F}_{h, T_j^* (X)} < \infty$ where $* \in \{0, \Psi \}$. Let $(u_j)_j$ satisfy \ref{assump:u}. Then for some 
$C > 0$
and $L \geq L_0$, the following hold.
\begin{itemize}
\item If $F$ has charge $q$ with $|q|\geq 1$, then for all $\varphi' \in \R^{\Lambda_N}$,
\begin{align}
\norm{\Eplus F (X, \varphi' + \zeta) }_{h, T_{j+1} ({X}, \varphi')} \leq C
 e^{\sqrt{\beta} |q| h} e^{-(|q|-1/2) r \Gamma_{j+1} (0)} \norm{F(X)}_{h, T_j^* (X)} G_{j+1} (\bar{X}, \varphi') . \label{eq:contraction_of_charge_q_term_external_field}
\end{align}
\item If $F$ is neutral, then for all $\varphi' \in \R^{\Lambda_N}$,
\begin{align}
\norm{\Eplus [ F(X,\varphi' + \zeta) - F(X, \zeta)  ]  }_{h, T_{j+1} (X, \varphi')} \leq C
L^{-1} (\log L)^{1/2} \norm{F(X)}_{h, T_j^* (X)} G_{j+1} (\bar{X}, \varphi') . \label{eq:gaussian_contraction_external_field}
\end{align}
\end{itemize}
\end{proposition}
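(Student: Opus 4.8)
The plan is to reduce both estimates to the corresponding contraction estimates without external field, i.e.\ the analogues of \cite[Section~\ref{sec:inequalities}]{dgauss1}, by exploiting the fact that the difference between the norm $\norm{\cdot}_{h,T_j^\Psi(X)}$ and $\norm{\cdot}_{h,T_j(X)}$ is entirely encoded by the replacement of $G_j$ by $G_j^\Psi$, together with property (4) of Proposition~\ref{prop:key_prop_G_j^Psi} (in the form \eqref{eq:analogue-Prop5.9}), which says that $G_j^\Psi$ behaves under the Gaussian expectation $\Eplus$ exactly like $G_j$ up to the harmless constant $C_\Psi(M_u)$. First I would record the elementary bound: for any polymer activity $F$ and any $\varphi$,
\begin{equation}
\norm{F(X,\varphi)}_{h,T_j(X,\varphi)} \leq \norm{F(X)}_{h,T_j^*(X)}\, G_j^*(X,\varphi),
\end{equation}
which for $*=\Psi$ is just the definition \eqref{eq:Omega_j^Psi_norm_def_X}, and for $*=0$ is the definition of $\norm{\cdot}_{h,T_j}$ from \eqref{eq:norm-Omega-Z2}. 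This is the only place the distinction between the two norms enters.

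For the charged case \eqref{eq:contraction_of_charge_q_term_external_field}, I would follow the proof of the no-external-field charge-$q$ contraction in \cite{dgauss1} essentially verbatim: expand $\Eplus F(X,\varphi'+\zeta)$ in Taylor series in $\varphi'$, bound the derivatives via the $T_j$-seminorm, and insert the above display to factor out $\norm{F(X)}_{h,T_j^*(X)}$; the crucial gain $e^{\sqrt\beta|q|h}e^{-(|q|-1/2)r\Gamma_{j+1}(0)}$ comes as usual from the Gaussian integration of $e^{i\sqrt\beta q\zeta(x)}$ against $\Gamma_{j+1}$, producing the decay $e^{-\frac12 q^2\beta\Gamma_{j+1}(0)}$ and then only needing the regulator to absorb a charge-one worth of fluctuation. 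The only modification is that the regulator bound now uses \eqref{eq:analogue-Prop5.9} instead of the plain $\Eplus[G_j(X,\varphi'+\zeta)]\leq 2^{|X|_j}G_{j+1}(X,\varphi')$, which merely inserts the factor $C_\Psi(M_u)$ into the constant $C=C(M_u)$.

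For the neutral case \eqref{eq:gaussian_contraction_external_field}, I would likewise mirror the proof of the $L^{-1}(\log L)^{1/2}$ Gaussian-contraction estimate for neutral activities in \cite{dgauss1}. The key structural point is that $F(X,\varphi'+\zeta)-F(X,\zeta)$ has no zeroth-order term in $\varphi'$, so after the Taylor expansion one picks up at least one derivative $D^nF$ with $n\geq 1$, each contracted against a test function of $C_j^2$-norm controlled by $\|\varphi'\|$; the factor $L^{-1}$ arises from the change of scale in the $C_j^2$ vs.\ $C_{j+1}^2$ norms (a spatial rescaling by $L$ gains a derivative-factor $L^{-1}$), and $(\log L)^{1/2}$ from the $\Gamma_{j+1}(0)\sim \log L$ growth entering the Gaussian moments of $\zeta$ in the regulator estimate. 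Again one substitutes $G_j^*$ for $G_j$ throughout and closes using \eqref{eq:analogue-Prop5.9}, so that no new analytic input is needed beyond Proposition~\ref{prop:key_prop_G_j^Psi}.

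The main obstacle I anticipate is bookkeeping rather than conceptual: one must check that every place in the original \cite{dgauss1} proofs where the regulator $G_j$ is used — in particular the passage from scale $j$ to scale $j+1$ via the subdecomposition \eqref{eq:Gamma_subdecomposition} and the strong-regulator estimates \eqref{eq:strong-reg} — goes through with $G_j^\Psi$, and that the shift $u_j$ (which in practice enters as a translation $\varphi\mapsto\varphi+tu_j$ of the argument) is absorbed by the $\Psi$-regulator's built-in supremum over $t\in[0,1]$, exactly as in the key property \eqref{eq:Psi-regulator_key_property1} and in property (3) of Proposition~\ref{prop:key_prop_G_j^Psi}. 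Provided one is careful that all the constants produced this way depend on $u$ only through $M_u$ (which is guaranteed by $(\textnormal{A}_u)$), the argument is a routine-but-lengthy adaptation, and I would present it by stating precisely which lemmas of \cite[Section~\ref{sec:inequalities}]{dgauss1} are invoked and remarking that their proofs apply mutatis mutandis with $G_j^\Psi$ in place of $G_j$.
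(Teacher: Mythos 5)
Your proposal matches the paper's proof in essence: both items are reduced to the no-external-field contraction lemmas of \cite{dgauss1} with the $\Psi$-regulator substituted for $G_j$, and the key technical substitution is invoking \eqref{eq:analogue-Prop5.9} in place of the plain regulator-expectation bound. Your closing caveat—that one must verify the change-of-scale/subdecomposition arguments go through with $G_j^\Psi$—is exactly the point the paper's proof handles for the charged case, namely checking that the exponential-contraction identity of \cite[Lemma~\ref{lemma:charged_term_exponential_contraction}]{dgauss1} survives when $\norm{\cdot}_{h,T_j^\Psi(X)}$ replaces $\norm{\cdot}_{h,T_j(X)}$, which follows from \eqref{eq:G_j^Psi_quad_bound} (i.e.\ Lemma~\ref{lemma:G_change_of_scale_external_field}).
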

\begin{proof}
The first item, \eqref{eq:contraction_of_charge_q_term_external_field} for $* = 0$ is just \cite[Lemma~\ref{lemma:contraction_of_charge_q_term}]{dgauss1}.


For $* = \Psi $, it suffices to argue that the conclusion of \cite[Lemma~\ref{lemma:charged_term_exponential_contraction}]{dgauss1} continues to hold under the modified assumption that $\norm{F}_{h, T_j^{\Psi} (X)} < \infty$. Indeed with this at hand, 
the proof of \eqref{eq:contraction_of_charge_q_term_external_field} proceeds exactly as that of \cite[Lemma~\ref{lemma:contraction_of_charge_q_term}]{dgauss1}, except that one invokes \eqref{eq:analogue-Prop5.9} above rather than \cite[Proposition~\ref{prop:E_G_j}]{dgauss1} towards the end of that proof. As to why the identity \cite[\eqref{eq:exponential_contraction}]{dgauss1} still holds, one simply observes upon inspecting its proof that an analogue of the argument in \cite[\eqref{eq:F-bound-id}--\eqref{eq:g-o1}]{dgauss1} involving $\norm{F(X)}_{h, T_j^* (X)}$ still applies when combining \eqref{eq:G_j^Psi_quad_bound} (which generalises \cite[Lemma~\ref{lemma:G_change_of_scale}]{dgauss1}) with \cite[\eqref{eq:g_j+s_bound}]{dgauss1}.

To see the second point, we proceed similarly as in \cite[Lemma~\ref{lemma:gaussian_contraction}]{dgauss1}: writing $(\Rem_0 \Eplus F) (X, \varphi') = \Eplus[ F(X,\varphi' + \zeta) - F(X, \zeta) ]$, Taylor's theorem and neutrality of $F$ give
\begin{align}\label{eq:Rem-0}
(\Rem_0 \Eplus F) (X, \varphi') = \int_0^1 dt \, (1-t) D \Rem_0 \Eplus F(X, \zeta + t\varphi')(\delta \varphi')  ,
\end{align}
where $\delta \varphi'(x)= \varphi'(x)-\varphi'(x_0)$ for a fixed point $x_0 \in X$. 
But since $D \Rem_0 \Eplus F (X,\varphi')= \Eplus D F(X, \cdot + \varphi')$, the left-hand side of \eqref{eq:Rem-0} is bounded in absolute value
by
\begin{multline}\label{eq:bd-rem_0}
  h^{-1}
  \int_0^1 dt\, (1-t) \norm{ \Eplus D F(X, \cdot + t\varphi')}_{h, T_{j+1}^* (X, t\varphi')} \norm{\delta \varphi'}_{C^2_{j+1} (X^*)}\\
  \leq h^{-1} C_d L^{-1} \int_0^1 dt\, (1-t) \norm{F (X)}_{h, T_j^* (X)} \Eplus [ G_{j}^{*}(X, t\varphi' + \zeta) ] \norm{\delta \varphi'}_{C^2_{j+1} (X^*)} ,
\end{multline}
applying \cite[\eqref{eq:neutral_contract1}]{dgauss1} in the second line.
Moreover, $\Eplus [ G_{j}^{*}( X, t\varphi' + \zeta) ] \leq C_{\Psi} 2^{|X|_j} G_{j+1} (\bar{X}, t\varphi')$ for both $* \in \{0, \Psi \}$, as follows readily from \eqref{eq:analogue-Prop5.9}, and by \cite[\eqref{eq:G_j_with_t_bounded_by_G_j}]{dgauss1} (applied with $n=2$),
\begin{align}
\Eplus [G_j^* (X, t\varphi' + \zeta)] \norm{\delta \varphi'}_{C_{j+1}^2 (X^*)} \leq C (\log L)^{1/2} G_{j+1} (\bar{X}, \varphi') .
\end{align}
On the other hand, for $n\geq 1$, $D^n \Rem_0 = D^n$ and thus by \cite[\eqref{eq:neutral_contract1}]{dgauss1}, we immediately get
\begin{align}\label{eq:bd-rem_n}
	| D^n (\Rem_0 \Eplus F) (X, \varphi) (f_1, \cdots, f_n) | \leq (C_g L)^{-n} \norm{D^n \Eplus F (X, \varphi' + \zeta)}_{h, T_j (X, \varphi')} \prod_{k=1}^n \norm{f_k}_{C_{j+1}^2 (X^*)}
\end{align}
for some constant $C_g >0$.
We obtain \eqref{eq:gaussian_contraction_external_field} from \eqref{eq:bd-rem_0}, \eqref{eq:bd-rem_n} by summing $\frac{h^n}{n!} \norm{D^n \Rem_0 \Eplus F(X, \varphi')}_{h, T_j (X, \varphi')}$ over $n \geq 0$.
\end{proof}

Finally, we recall  the definition of the reblocking operator from \cite[Definition~\ref{def:reblocking}]{dgauss1},
defined for a $j$-scale polymer activity $F$ by
\begin{equation}
\mathbb{S} F (X) = \sum_{Y \in \cP_j^c}^{\bar{Y}=X} F(Y), \quad X \in \cP_{j+1}^c
\end{equation}
and extended to disconnected $Z \in \cP_{j+1}$ by $\mathbb{S} F(Z) = \prod_{X\in \operatorname{Comp}_{j+1}(Z)} \mathbb{S} F (X)$.
The following lemma extends the reblocking estimate from \cite[Proposition~\ref{prop:largeset_contraction-v2}]{dgauss1}.
The only difference is that the bound on the right-hand side 
also holds for the weaker norm $\norm{\cdot}_{h, T_j^{\Psi}}$.

\begin{proposition} \label{prop:largeset_contraction_external_field}
  There exists a geometric constant $\eta >0$ and $\epsilon_{rb} := A^{-8}$ such that the following holds. 
  Let $F$ be a polymer activity supported on large sets and satisfy $\norm{F}_{h, T_j^{*}} \leq \epsilon_{rb}$. Then for any $L\geq 5$, $(A/2)^{\eta} \geq L (2eL^2)^{1+\eta}$, $X\in \cP_{j+1}$ and $* \in \{0, \Psi \}$,
\begin{equation}
\norm{\mathbb{S} \Eplus [ F(X, \cdot + \zeta)  ] }_{h, T_{j+1} (X)} \leq (L^{-1} A^{-1} )^{|X|_{j+1}} \norm{F}_{h, T_j^*}.
\end{equation}
\end{proposition}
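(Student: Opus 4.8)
The plan is to rerun the proof of \cite[Proposition~\ref{prop:largeset_contraction-v2}]{dgauss1} essentially verbatim, replacing the regulator $G_j$ and the norm $\norm{\cdot}_{h,T_j}$ by $G_j^{\Psi}$ and $\norm{\cdot}_{h,T_j^{\Psi}}$ wherever they occur, and tracking the two resulting discrepancies: the constant $C_{\Psi}=C_{\Psi}(M_u)$ entering the Gaussian integration bound for $G_j^{\Psi}$, and the loss of a factor $2$ in the large-set weight ($A/2$ in place of $A$) in the source norm. In the case $*=0$ there is nothing to do: the statement is then \cite[Proposition~\ref{prop:largeset_contraction-v2}]{dgauss1} itself, since the hypothesis $(A/2)^{\eta}\geq L(2eL^2)^{1+\eta}$ is stronger than the one imposed there. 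So I would assume $*=\Psi$ from the outset.

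By the multiplicativity of $\mathbb{S}$, of $\norm{\cdot}_{h,T_{j+1}}$, and of $G_j^{\Psi}$ (property~(2) of Proposition~\ref{prop:key_prop_G_j^Psi}), it suffices to treat connected $X\in\cP_{j+1}^c$, where $\mathbb{S}\Eplus[F(X,\cdot+\zeta)]=\sum_{Y\in\cP_j^c:\,\bar Y=X}\Eplus[F(Y,\cdot+\zeta)]$. For each such $Y$, since every $C^2_{j+1}$-unit test function is $C^2_j$-unit and $F(Y,\cdot)$ depends only on $\varphi|_{Y^*}$, the defining property of $\norm{\cdot}_{h,T_j^{\Psi}(Y)}$ (cf.~\eqref{eq:Omega_j^Psi_norm_def_X}) combined with \eqref{eq:analogue-Prop5.9} yields
\[
\norm{\Eplus F(Y,\cdot+\zeta)}_{h,T_{j+1}(X,\varphi')}\leq \norm{F(Y)}_{h,T_j^{\Psi}(Y)}\,\Eplus\!\big[G_j^{\Psi}(Y,\varphi'+\zeta)\big]\leq C_{\Psi}\,2^{|Y|_j}\,\norm{F(Y)}_{h,T_j^{\Psi}(Y)}\,G_{j+1}(\bar Y,\varphi') ,
\]
and since $\bar Y=X$ the regulator on the right is $G_{j+1}(X,\varphi')$, which cancels against the weight defining $\norm{\cdot}_{h,T_{j+1}(X)}$. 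Inserting $\norm{F(Y)}_{h,T_j^{\Psi}(Y)}\leq(A/2)^{-|Y|_j}\norm{F}_{h,T_j^{\Psi}}$ and using $2^{|Y|_j}(A/2)^{-|Y|_j}=4^{|Y|_j}A^{-|Y|_j}$, the estimate for connected $X$ reduces to the purely combinatorial inequality
\[
C_{\Psi}\sum_{Y\in\cP_j^c:\,\bar Y=X}4^{|Y|_j}A^{-|Y|_j}\leq (L^{-1}A^{-1})^{|X|_{j+1}} .
\]
This is the combinatorial core of \cite[Proposition~\ref{prop:largeset_contraction-v2}]{dgauss1}, the only changes being the overall finite factor $C_{\Psi}$ and the base $4c_dA^{-1}$ of the underlying geometric series in place of $2c_dA^{-1}$ there (with $c_d$ the dimensional bound on the number of connected $j$-polymers of a given size through a fixed $j$-block). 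Since $F$ is supported on large sets, every contributing $Y$ satisfies $|Y|_j>2^d$; combined with the geometric fact underlying \cite{dgauss1} that controls $|Y|_j$ from below in terms of $|X|_{j+1}$ for connected $Y$ with $\bar Y=X$, this produces the $L^{-1}A^{-1}$ gain per $(j+1)$-block, and the requirements $(A/2)^{\eta}\geq L(2eL^2)^{1+\eta}$ and $\epsilon_{rb}=A^{-8}$ — which force $A$ to be large as a function of $L$ — leave comfortable room to absorb the additional finite factors $C_{\Psi}$ and $4c_d/(2c_d)=2$. The disconnected case then follows by multiplicativity.

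The one step that genuinely departs from \cite{dgauss1}, and hence the only thing requiring real verification, is the displayed integration inequality — that $G_j^{\Psi}$ may stand in for $G_j$ there. This is precisely what Proposition~\ref{prop:key_prop_G_j^Psi}(2),(4) (equivalently \eqref{eq:analogue-Prop5.9}) supplies, at the sole cost of the uniform constant $C_{\Psi}(M_u)$. I therefore expect the main — and ultimately mild — obstacle to be bookkeeping: confirming that the finitely many extra numerical factors ($C_{\Psi}$, the $A\mapsto A/2$ loss, and the $2^{|Y|_j}$ from integrating the regulator) are all dominated by the $A^{-|Y|_j}$ decay under the stated smallness of $A$, so that the rate $(L^{-1}A^{-1})^{|X|_{j+1}}$ is inherited unchanged from \cite{dgauss1}.
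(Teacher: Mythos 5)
Your proof is correct and follows essentially the same route as the paper's: the paper's own proof is a one-liner noting that the case $*=\Psi$ follows from the case $*=0$ by running the argument of \cite[Proposition~\ref{prop:largeset_contraction-v2}]{dgauss1} with $A$ replaced by $A/2$, which is precisely the reduction you carry out. Your write-up is more explicit in tracking the two resulting discrepancies -- the constant $C_{\Psi}(M_u)$ from the modified regulator-integration bound \eqref{eq:analogue-Prop5.9} and the $2^{|Y|_j}$ (hence $4^{|Y|_j}$ after combining with $(A/2)^{-|Y|_j}$) -- and in spelling out why the strengthened hypothesis $(A/2)^{\eta}\geq L(2eL^2)^{1+\eta}$ and the smallness $\epsilon_{rb}=A^{-8}$ give room to absorb them, but this is a more careful account of what the paper's proof tacitly assumes rather than a different argument.
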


\begin{proof}
The case $* = 0$ is exactly \cite[Proposition~\ref{prop:largeset_contraction-v2}]{dgauss1}.
The case $* = \Psi$ is obtained by following the same proof, but $A$ is replaced by $A/2$ in view of the definition of $\norm{\cdot}_{h, T_j^{\Psi}}$, see \eqref{eq:Omega_j^Psi_norm_def}.
\end{proof}

\subsection{Choice of parameters}
\label{sec:parameters}

Finally, we explain how the parameters in the norms above are chosen.
  
First of all, the parameters $\kappa_L$, $c_2$, $c_4$, $c_w$ are chosen as in \cite[Section~5]{dgauss1} (see the end of Section~\ref{sec:polymersdef} and Remark~\ref{R:choice-parameters} therein), except that we impose the extra conditions resulting from the assumptions of Lemma~\ref{lemma:G_change_of_scale_external_field} and Proposition~\ref{prop:key_prop_G_j^Psi}. These do not contradict the conditions  from \cite[Section~5]{dgauss1} as they only impose further smallness conditions
on  $c_w$, $c_2$, $c_4$.

Next, given a finite-range step distribution $J$, we fix an additional parameter $r\in (0,1]$ such that (with $C= \sqrt{2} c_h c_f^{-1} $, an absolute constant from \cite[Lemma~\ref{lem:W_norm}]{dgauss1},
cf.~also \cite[\eqref{eq:U_norm} and Lemma~\ref{lemma:choice_of_c_h}]{dgauss1} regarding the choices of $c_f$ and $c_h$, respectively)
\begin{equation}
  Cr \leq \rho_J^2 ,
\end{equation}
and we always impose the condition (with $C= 2 \max\{ c_f^{-2}, c_f^{-1}  \} $,  also an absolute constant from \cite[Lemma~\ref{lem:W_norm}]{dgauss1})
\begin{equation}
  \beta \geq C.
\end{equation}
The parameter $h$ is then chosen as in \cite[Definition~\ref{def:K_space}]{dgauss1}
as $h = \max\{ c_f^{1/2}, r c_h \rho_J^{-2} \sqrt{\beta}, \rho_J^{-1} \}$.

Finally, we will assume that $L \geq L_0$ and $A \geq A_0(L)$ with $L_0$ and $A_0(L)$ chosen to satisfy the assumptions
of \cite[Theorem~\ref{thm:local_part_of_K_j+1}]{dgauss1} as well as of those of Lemma~\ref{lemma:G_change_of_scale_external_field}, Proposition~\ref{prop:contraction_estimates_external_field}, and Proposition~\ref{prop:largeset_contraction_external_field} above.
Moreover, we will always tacitly assume from here on that $L$ is $\ell$-adic, i.e.,~of the form $L=\ell^M$ for some integer $M \geq 1$, where $\ell := \min \{ 2^n : 2^n \geq \ell_0\}$ is the smallest dyadic integer larger than $\ell_0$ (with $\ell_0$ as supplied by Lemma~\ref{lemma:G_change_of_scale_external_field}, now fixed since $c_2$ is). This ensures that i) Lemma~\ref{lemma:G_change_of_scale_external_field} is always in force and ii) eventually, \eqref{eq:TDlimit} can be used (since $L$ is automatically dyadic).
Later in Sections~\ref{sec:rg_generic_step_external_field} and~\ref{sec:proof_Z_N_ratio},
further lower bound conditions on $L$ and $A$ will be imposed,
which are consistent with our standing assumptions $L \geq L_0$ and $A \geq A_0 (L)$.

\section{Reblocking the external field}

We will use a renormalisation group analysis in Section~\ref{sec:rg_generic_step_external_field} to study the flow of the partition functions defined by \eqref{eq:Zplusu_def}.
Ideally, we would like to write the renormalisation group maps
in identical form as those of \cite[Section~\ref{sec:rg_generic_step}]{dgauss1}, but the introduction of the external field $u_j$ breaks the algebraic form of $U_j$ (see \eqref{eq:U_j_form})
and the symmetry of the system that we used to define the localisation operators $\Loc$ in \cite{dgauss1}.
Thus, we will first reduce the problem caused by the external field to a setting where the form of $U_j$ stays the same as in the original renormalisation group steps and then bound the perturbation created by this operation.
This is achieved by the following proposition and lemma.
For $X \in \cP_j$, recall that $ \cP_j (X)$ denotes the set of all $j$-polymers $Y$ such that $Y \subset X$.

\begin{definition} \label{def:Psi_j}
Given $u_j \in \R^{\Lambda_N}$ and scale-$j$ polymer activities $K_j$ and $U_j$, define for $X \in \cP_j^c$,
\begin{align}
\cF_{\Psi} [u_j, U_j, K_j \, ; \, j ] (X, \varphi) = 
-K_j (X, \varphi) + \sum_{Y\in \cP_j (X)} (e^{U_j(\cdot, \varphi + u_j)  } - e^{U_j (\cdot, \varphi)} )^{X \backslash Y} K_j (Y, \varphi + u_j)
\end{align}
 where
\begin{equation}
(e^{U_j(\cdot, \varphi + u_j)  } - e^{U_j (\cdot, \varphi)} )^{Z} \stackrel{\textnormal{def.}}{=} \prod_{B \in \cB_j(Z) } (e^{U_j(B, \varphi + u_j)  } - e^{U_j (B, \varphi)}), \quad \text{ for $Z\in \cP_j$,}
\end{equation}
and $\cF_{\Psi} [u_j, U_j, K_j \, ; \, j ] (Z, \varphi) = \prod_{X\in \textnormal{Comp}_j (Z)} \cF_{\Psi} [u_j, U_j, K_j \, ; \, j ] (X, \varphi)$ for general $Z\in \cP_j$.
\end{definition}

The dependence of $\cF_{\Psi}$ on the scale $j$ will often be omitted when it is clear from the context.
The following is a purely algebraic statement.  Note in particular that the assumptions on $U_j$, $K_j$ appearing below will be satisfied by the choices in \eqref{eq:Z_j_form}, \eqref{eq:U_j_form}.

\begin{proposition} \label{prop:reblocking_Z_with_Psi}
  Assume that for some scale-$j$ polymer activities $K_j$ and $U_j$,
  \begin{equation}
    Z_j (\varphi) = e^{-E_j |\Lambda_N|} \sum_{X \in \cP_j} e^{U_j (\Lambda\setminus X, \varphi)} K_j (X, \varphi),
\end{equation}
and that $U_j$ is additive over blocks, i.e.,~$U_j(X \cup Y)=U_j(X)+ U_j(Y)$ for all $X\cap Y =\emptyset$, $X,Y \in \cP_j$.
Let $\Psi_j = \cF_{\Psi} [u_j, U_j, K_j ; j]$. 
Then
\begin{equation}
Z_j (\varphi + u_j) = e^{-E_j |\Lambda_N|} \sum_{X\in \cP_j} e^{U_j (\Lambda \backslash X, \varphi)} \prod_{Z\in \textnormal{Comp}_j (X)} ( K_j + \Psi_j ) (Z, \varphi) .  \label{eq:reblocking_Z_with_Psi}
\end{equation}
If $K_j, U_j$ are $2\pi/\sqrt{\beta}$-periodic, then so is $\Psi_j$.
If $u_j$ satisfies \ref{assump:u}, then $\Psi_j (X) =0$ whenever $B_0^* \cap X = \emptyset$. 

\end{proposition}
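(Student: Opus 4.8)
\emph{Overview and Step 1 (unfolding the potential).} As the proposition indicates, the identity is purely algebraic, and the plan is to start from $Z_j(\varphi+u_j)=e^{-E_j|\Lambda_N|}\sum_{X\in\cP_j}e^{U_j(\Lambda_N\setminus X,\varphi+u_j)}K_j(X,\varphi+u_j)$, use block-additivity of $U_j$ to ``unfold'' the shift $\varphi\mapsto\varphi+u_j$ inside the $e^{U_j}$-factor, and then re-sum. Concretely, using additivity one writes $e^{U_j(\Lambda_N\setminus X,\varphi+u_j)}=\prod_{B\in\cB_j(\Lambda_N\setminus X)}e^{U_j(B,\varphi+u_j)}$, splits each factor as $e^{U_j(B,\varphi)}+(e^{U_j(B,\varphi+u_j)}-e^{U_j(B,\varphi)})$, expands the product over $\cB_j(\Lambda_N\setminus X)$, and recombines the undifferentiated blocks by additivity, obtaining
\[
  e^{U_j(\Lambda_N\setminus X,\,\varphi+u_j)}=\sum_{W\in\cP_j(\Lambda_N\setminus X)}e^{U_j(\Lambda_N\setminus(X\cup W),\,\varphi)}\,\delta^{W},\qquad \delta^{W}:=\prod_{B\in\cB_j(W)}\big(e^{U_j(B,\varphi+u_j)}-e^{U_j(B,\varphi)}\big).
\]
Substituting this and re-summing over the merged polymer $V:=X\cup W$ --- for fixed $V$, the pairs of disjoint polymers $(X,W)$ with $X\cup W=V$ are parametrised by $Y:=X\in\cP_j(V)$ with $W=V\setminus Y$ --- turns this into
\[
  Z_j(\varphi+u_j)=e^{-E_j|\Lambda_N|}\sum_{V\in\cP_j}e^{U_j(\Lambda_N\setminus V,\varphi)}\sum_{Y\in\cP_j(V)}\delta^{V\setminus Y}\,K_j(Y,\varphi+u_j).
\]

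\emph{Step 2 (factorisation over components).} Next I would factorise the inner sum over the connected components $Z_1,\dots,Z_k$ of $V$. Since distinct components are non-adjacent, every $Y\in\cP_j(V)$ decomposes uniquely as $Y=\bigsqcup_iY_i$ with $Y_i:=Y\cap Z_i\in\cP_j(Z_i)$, and conversely any such family arises this way; moreover $\operatorname{Comp}_j(Y)=\bigsqcup_i\operatorname{Comp}_j(Y_i)$, so $K_j(Y,\cdot)=\prod_iK_j(Y_i,\cdot)$ since (as a polymer activity) $K_j$ factorises over connected components, while $\delta^{V\setminus Y}=\prod_i\delta^{Z_i\setminus Y_i}$ since $W\mapsto\delta^{W}$ is multiplicative over disjoint unions of blocks. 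Distributing the sum over the product then shows the inner double sum equals $\prod_{i=1}^k\sum_{Y_i\in\cP_j(Z_i)}\delta^{Z_i\setminus Y_i}K_j(Y_i,\varphi+u_j)$, and by Definition~\ref{def:Psi_j} the $i$-th factor is precisely $K_j(Z_i,\varphi)+\cF_\Psi[u_j,U_j,K_j;j](Z_i,\varphi)=(K_j+\Psi_j)(Z_i,\varphi)$ --- the subtracted $-K_j(Z_i,\varphi)$ in Definition~\ref{def:Psi_j} cancelling the $Y_i=Z_i$ contribution of its own sum. Since $\{Z_1,\dots,Z_k\}=\operatorname{Comp}_j(V)$, plugging this back into Step~1 yields exactly \eqref{eq:reblocking_Z_with_Psi}.

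\emph{Step 3 (periodicity and support).} For the remaining assertions: if $K_j$ and $U_j$ are $2\pi/\sqrt{\beta}$-periodic, then so are $e^{U_j(\cdot,\varphi)}$, $e^{U_j(\cdot,\varphi+u_j)}$, $K_j(\cdot,\varphi)$ and $K_j(\cdot,\varphi+u_j)$ --- the only ingredients of $\cF_\Psi$ --- hence $\Psi_j$ is $2\pi/\sqrt{\beta}$-periodic, periodicity being stable under sums and products. For the support claim, assume $(\textnormal{A}_u)$ and let $X\in\cP_j^c$ with $0\notin X$; then $X\cap B_0=\emptyset$, where $B_0$ is the unique $j$-block with $0\in B_0$, since $B_0\subseteq X$ would force $0\in X$. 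By the support condition in $(\textnormal{A}_u)$ --- equivalently, $N_5(\supp u_j)\subset B_0$, as recalled before Definition~\ref{def:Psi_regulator} --- the field $u_j$ then vanishes on the ($\ell^1$-)neighbourhoods of $X$ that determine $U_j(B,\cdot)$ for $B\in\cB_j(X)$ and $K_j(Y,\cdot)$ for $Y\in\cP_j(X)$. Hence $e^{U_j(B,\varphi+u_j)}=e^{U_j(B,\varphi)}$ for every block $B\subseteq X$, so in $\cF_\Psi[u_j,U_j,K_j;j](X,\varphi)$ every term with $Y\subsetneq X$ vanishes while the $Y=X$ term equals $K_j(X,\varphi+u_j)=K_j(X,\varphi)$, which cancels $-K_j(X,\varphi)$; thus $\Psi_j(X)=0$. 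For disconnected $Z$ with $0\notin Z$, each connected component $X$ has $0\notin X$, whence $\Psi_j(Z)=\prod_X\Psi_j(X)=0$.

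\emph{Main obstacle.} The only nontrivial point is the combinatorial bookkeeping in the two re-summations of Steps~1--2: re-indexing the double polymer sum by the merged polymer $V$, and then factorising the resulting inner sum across the connected components of $V$, which rests on the interplay between multiplicativity of $\delta^{\bullet}$ over blocks and of $K_j$ over connected components. Everything else is a direct unwinding of the definitions.
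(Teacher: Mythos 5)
Your proof is correct and follows essentially the same route as the paper's: unfold $e^{U_j}$ block by block via $e^{U_j(B,\varphi+u_j)}=(e^{U_j(B,\varphi+u_j)}-e^{U_j(B,\varphi)})+e^{U_j(B,\varphi)}$, re-sum over the merged polymer, and factor the inner sum over connected components using the multiplicativity of $\delta^{\bullet}$ over blocks and of $K_j$ over components; you in fact supply a bit more detail than the paper, especially for the support assertion, where you correctly observe that one needs both $U_j(B,\cdot)$ for $B\subset X$ and $K_j(Y,\cdot)$ for $Y\subset X$ to be unaffected by the shift. One minor imprecision: the $-K_j(Z_i,\varphi)$ in Definition~\ref{def:Psi_j} cancels the external $K_j(Z_i,\varphi)$ added in $(K_j+\Psi_j)(Z_i,\varphi)$, not the $Y_i=Z_i$ term of the sum (which is $K_j(Z_i,\varphi+u_j)$); this is cosmetic and does not affect the substance of the argument.
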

\begin{proof}
  This is a result of a simple reblocking argument.  Using the assumption
  \begin{equation}
   Z_j (\varphi + u_j) = e^{-E_j |\Lambda_N|} \sum_{X \in \cP_j} e^{U_j (X, \varphi + u_j)} K_j (\Lambda \backslash X, \varphi + u_j),
  \end{equation}
  by making the substitution
\begin{equation}
  e^{U_j (X, \varphi + u_j)} = \prod_{B\in \cB_j (X)}  e^{U_j (B, \varphi + u_j)}
  = \sum_{Y\in \cP_j (X)} ( e^{U_j (\cdot, \varphi + u_j)} - e^{U_j (\cdot, \varphi)} )^{Y} e^{U_j (X\backslash Y, \varphi)}
\end{equation}
we immediately obtain that
\begin{align}
Z_j (\varphi + u_j) = e^{-E_j |\Lambda_N|} \sum_{Y\in \cP_j} e^{U_j (Y, \varphi)} \sum_{X' \in \cP_j (\Lambda \backslash Y)} ( e^{U_j (\cdot, \varphi + u_j)} - e^{U_j (\cdot, \varphi)} )^{X'} K_j (\Lambda \backslash (Y \cup X'), \varphi + u_j).
\end{align}
Then we arrive at \eqref{eq:reblocking_Z_with_Psi} after factoring the above expression into connected components of $\Lambda \backslash Y$.

The asserted periodicity of $\Psi_j$ is plainly inherited from $K_j, U_j$ 
 and the last remark is a consequence of the fact that $K_j (X, \varphi + u_j) = K_j (X, \varphi)$ for $0 \not\in X^*$ and $u_j$ satisfying \ref{assump:u}.
\end{proof}

For the next estimates, recall the definition of the space $\Omega_j^U$ from \cite[Definition~\ref{def:U_space}]{dgauss1}
and of $\Omega_j^K$ from \cite[Definition~\ref{def:K_space}]{dgauss1}.
In particular, the parameters these spaces and their norms depend on are always assumed to satisfy the conditions specified in Section~\ref{sec:parameters}.

\begin{lemma} \label{lemma:G_j^Psi_is_Psi_regulator}
Suppose $(u_j)_{j}$ satisfies \ref{assump:u}. Given $U_j$ in form \eqref{eq:U_j_form} and $K_j$ a $2\pi/\sqrt{\beta}$-periodic polymer activity, let $\Psi_j = \cF_{\Psi} [u_j, U_j, K_j]$.
Then there exist $C >0$
and $\epsilon_{\Psi} >0$ such that, whenever $\norm{\omega_j}_{\Omega_j} := \max\{ \norm{U_j}_{\Omega_j^U} , \norm{K_j}_{\Omega_j^K} \} \leq \epsilon_{\Psi}$, 
\begin{itemize}
\item[\textnormal{(1)}] $\norm{\Psi_j}_{h, T_j^{\Psi}} \leq C \norm{\omega_j}_{\Omega_j}$; 
\item[\textnormal{(2)}] for $X\in \cS_j$, $\norm{\Eplus [ \Psi_j (X, \varphi'+ \zeta) - \hat{\Psi}_{j, 0} (X, \zeta)  ] }_{h, T_{j+1} (X, \varphi')} \leq A^{-|X|_j} \alphaLoc^{\Psi} \norm{\Psi_j}_{h, T_j^{\Psi}} G_{j+1} (\bar{X}, \varphi')$
\end{itemize}
where $\alphaLoc^{\Psi} = CL^{-1}(\log L)^{1/2}+ C\min\ha{1,\sum_{q\geq 1} e^{\sqrt{\beta}qh} e^{-(q-1/2)r\beta\Gamma_{j+1}(0)}}$ and $\hat{\Psi}_{j, 0}$ is the charge-0 term of $\Psi_j$.
\end{lemma}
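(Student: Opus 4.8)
The plan is to estimate $\Psi_j = \cF_\Psi[u_j,U_j,K_j]$ directly from Definition~\ref{def:Psi_j} by exploiting (i) the factorisation of the $\|\cdot\|_{h,T_j^\Psi}$-norm over connected components, (ii) the fact that $\Psi_j(X)=0$ unless $0\in X$ (Proposition~\ref{prop:reblocking_Z_with_Psi}), so only a bounded number of blocks near the origin ever enter, and (iii) that each ``difference'' factor $e^{U_j(B,\varphi+u_j)}-e^{U_j(B,\varphi)}$ carries a small factor proportional to $\|u_j\|_{C_j^2}\le M_u/\kappa_L$, which is what ultimately produces a constant times $\max\{\|U_j\|_{\Omega_j^U},\|K_j\|_{\Omega_j^K}\}$ rather than a factor of order one. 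Throughout, the product structure and the $(A/2)^{|X|_j}$ weight in \eqref{eq:Omega_j^Psi_norm_def} mean it suffices to bound a single connected $X\in\cP_j^c$ with $0\in X$, and then sum over the polymer decomposition $Y\subset X$ appearing in $\cF_\Psi$.

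For part (1), I would first record the pointwise bound on the difference factors: by the fundamental theorem of calculus, $e^{U_j(B,\varphi+u_j)}-e^{U_j(B,\varphi)}=\int_0^1 DU_j(B,\varphi+tu_j)(u_j)\,e^{U_j(B,\varphi+tu_j)}\,dt$, and using $U_j$ in form \eqref{eq:U_j_form} together with the $\Omega_j^U$-bound one controls $\|DU_j(B,\cdot)(u_j)\|$ by $C\|U_j\|_{\Omega_j^U}\|u_j\|_{C_j^2}$ against a regulator of the form $e^{cU_j}$ — and crucially, the shift $\varphi\mapsto\varphi+tu_j$ inside $e^{U_j}$ and inside $K_j(\cdot,\varphi+u_j)$ is exactly what the $\Psi$-regulator $G_j^\Psi$ is designed to absorb, via \eqref{eq:Psi-regulator_key_property1} and the analogous statement for $e^{U_j}$ (using that $G_j^\Psi$ dominates $\sup_{t\in[0,1]}G_j(\cdot,\varphi+tu_j)$). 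Then one expands $\cF_\Psi(X)=-K_j(X)+\sum_{Y\subset X}(\cdots)^{X\setminus Y}K_j(Y,\varphi+u_j)$: the term $Y=X$, $X\setminus Y=\emptyset$ cancels the leading $-K_j(X)$ up to $K_j(X,\varphi+u_j)-K_j(X,\varphi)$, which again carries a factor $\|u_j\|_{C_j^2}\|K_j\|_{\Omega_j^K}$ by the same FTC argument; every remaining term has at least one difference factor and hence at least one power of $\|u_j\|_{C_j^2}\le M_u/\kappa_L$. Summing over $Y$ and over the bounded geometry near the origin produces $C_\Psi(M_u)\max\{\|U_j\|_{\Omega_j^U},\|K_j\|_{\Omega_j^K}\}$, with the $(A/2)$ (rather than $A$) large-set weight leaving enough room for the combinatorial sum over subpolymers; the smallness hypothesis $\|\omega_j\|_{\Omega_j}\le\epsilon_\Psi$ is used to keep the regulators $e^{cU_j}$ under control (so that $U_j$ stays perturbative and the various $g_j$-type regulators can be merged into $G_j^\Psi$).

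For part (2) I would combine part (1) with the contraction estimates of Proposition~\ref{prop:contraction_estimates_external_field}. Since $K_j$, $U_j$ are $2\pi/\sqrt\beta$-periodic, so is $\Psi_j$, so decompose $\Psi_j(X)=\sum_{q\in\Z}\hat\Psi_{j,q}(X)$ into charge sectors. Subtracting the charge-$0$, $\varphi'$-independent piece $\hat\Psi_{j,0}(X,\zeta)$ and applying $\Eplus$: to the neutral part $\hat\Psi_{j,0}$ apply \eqref{eq:gaussian_contraction_external_field}, which gives the $CL^{-1}(\log L)^{1/2}$ gain; to each charged part $\hat\Psi_{j,q}$, $|q|\ge1$, apply \eqref{eq:contraction_of_charge_q_term_external_field}, which gives $C\,e^{\sqrt\beta|q|h}e^{-(|q|-1/2)r\Gamma_{j+1}(0)}$ (matching $\alphaLoc^\Psi$ after noting $\beta\Gamma_{j+1}(0)$ versus $\Gamma_{j+1}(0)$ is absorbed into the normalisation conventions and the $\min\{1,\cdot\}$ accounts for the case $L^{-1}(\log L)^{1/2}$ is already the dominant bound). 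Here one uses the $\norm{\cdot}_{h,T_j^\Psi}$-version of those contraction estimates — exactly the point of having proved Proposition~\ref{prop:contraction_estimates_external_field} for $*=\Psi$. The extra $A^{-|X|_j}$ on the right-hand side comes from the definition of $\norm{\cdot}_{h,T_j^\Psi}$ carrying $(A/2)^{|X|_j}$: bounding $\norm{\Psi_j(X)}_{h,T_j^\Psi(X)}\le (A/2)^{-|X|_j}\norm{\Psi_j}_{h,T_j^\Psi}$ and absorbing $2^{|X|_j}$ into $C_\Psi$ yields the stated $A^{-|X|_j}$.

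The main obstacle I anticipate is bookkeeping the regulators coherently through the difference factors: one must verify that for every term in the expansion of $\cF_\Psi(X,\varphi)$ — a product of several $e^{U_j(B,\varphi+u_j)}-e^{U_j(B,\varphi)}$ factors on $X\setminus Y$ times $K_j(Y,\varphi+u_j)$ — the resulting bound is against a single copy of $G_j^\Psi(X,\varphi)$ with no leftover regulator growth, and that the constants stay uniform in $j$ (this is where $(\textnormal{A}_u)$ and $\kappa_L\|u_j\|_{C_j^2}\le M_u$ are essential, exactly as in \eqref{eq:G_j^Psi_zero_point_bound}). The FTC/Taylor arguments and the charge-sector sums are routine once this regulator accounting is set up; the novelty over \cite{dgauss1} is purely the systematic replacement of $G_j$ by $G_j^\Psi$ and $A$ by $A/2$, which Propositions~\ref{prop:key_prop_G_j^Psi}, \ref{prop:contraction_estimates_external_field} and \ref{prop:largeset_contraction_external_field} have been arranged to make transparent.
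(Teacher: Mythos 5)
Your argument is correct and part (2) is exactly the paper's proof (charge-sector decomposition, then \eqref{eq:contraction_of_charge_q_term_external_field} for $|q|\geq 1$ and \eqref{eq:gaussian_contraction_external_field} for $q=0$, followed by the $(A/2)^{-|X|_j}$ bookkeeping). For part (1) your route differs in one respect: you exploit the cancellation between the leading $-K_j(X,\varphi)$ and the $Y=X$ term in $\cF_\Psi$, and then apply the fundamental theorem of calculus both to $K_j(X,\varphi+u_j)-K_j(X,\varphi)$ and to each factor $e^{U_j(B,\varphi+u_j)}-e^{U_j(B,\varphi)}$, extracting an explicit factor of $\norm{u_j}_{C_j^2}$ in every term. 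The paper does neither: it simply bounds $-K_j(X,\varphi)$ and each $K_j(Y,\varphi+u_j)$ separately against $A^{-|Y|_j}\norm{K_j}_{\Omega_j^K}G_j^\Psi$ via \eqref{eq:Psi-regulator_key_property1}, bounds each exponential difference crudely by $O(\norm{U_j}_{\Omega_j^U})$ via submultiplicativity, and uses the smallness hypothesis $\norm{\omega_j}_{\Omega_j}\leq\epsilon_\Psi$ (polynomial in $A$, $L$) to convert the product over $|X\setminus Y|_j$ small factors into the needed $A^{-|X|_j}$ decay. Your FTC route yields a marginally stronger (but unused) bound with an extra $\norm{u_j}$ factor at the cost of more bookkeeping.

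One conceptual slip in your framing is worth flagging, although your detailed computation comes out right. You write that the small factor $\propto\norm{u_j}_{C_j^2}\leq M_u/\kappa_L$ is ``what ultimately produces a constant times $\max\{\norm{U_j}_{\Omega_j^U},\norm{K_j}_{\Omega_j^K}\}$ rather than a factor of order one.'' This is misleading: $(\textnormal{A}_u)$ only gives $\kappa_L\norm{u_j}_{C_j^2}\leq M_u$, so $\norm{u_j}_{C_j^2}$ is merely $O(M_u/\kappa_L)$, which is bounded but need not be small; its contribution is absorbed into the constant $C_\Psi(M_u)$, not into the perturbative factor. The single power of $\norm{\omega_j}_{\Omega_j}$ actually comes from the factors $\norm{U_j}_{\Omega_j^U}$ (from $DU_j$, or equivalently from $e^{U_j}-1=O(\norm{U_j})$) and $\norm{K_j}_{\Omega_j^K}$ that appear in every term — your FTC formula does show this, since $\norm{DU_j(B,\cdot)(u_j)}\lesssim\norm{U_j}_{\Omega_j^U}\norm{u_j}_{C_j^2}$ — but the introduction paragraph attributes the smallness to the wrong factor.
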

\begin{proof}
To prove (1), we first notice that by \cite[Lemma~\ref{lem:W_norm}]{dgauss1} (whose assumptions are satisfied by the assumptions of this lemma)
and \eqref{eq:Psi-regulator_key_property1}, for $t \in \{0,1\}$,
\begin{align}
& \norm{U_j (B, \varphi + tu_j)}_{h, T_j (B, \varphi)} \leq C A^{-1} \norm{U_j}_{\Omega_j^U} w_j (B, \varphi + t u_j)^2 , \quad B \in \cB_j \\
& \norm{K_j (X, \varphi + tu_j)}_{h, T_j (X, \varphi)} \leq  A^{-|X|_j} \norm{K_j}_{\Omega_j^K} G_j^{\Psi} (X, \varphi), \quad X \in \cP_j.
\end{align}
Also, using
$\norm{e^{F} - 1}_{h, T_j (B, \varphi)} \leq \norm{F}_{h, T_j (B, \varphi)} e^{\norm{F}_{h, T_j (B, \varphi)}}$,
\begin{align}
	& \norm{e^{U_j (B, \varphi + u_j)} - e^{U_j (B, \varphi)} }_{h, T_j (B, \varphi)} 
	\nnb
	& \qquad
	\leq C A^{-1} \norm{U_j}_{\Omega_j^U} \max_{t\in \{0,1\}} w_j (B,  \varphi + tu_j) e^{C A^{-1} \norm{U_j}_{\Omega_j^U} \max_{t\in \{0,1\}} w_j (B,  \varphi + tu_j)}
\end{align}
Using the submultiplicativity of the $\| \cdot \|_{h,T_j(B,\varphi)}$-norm to bound the powers of $e^{U_j (B, \varphi + u_j)} - e^{U_j (B, \varphi)}$
and Proposition~\ref{prop:key_prop_G_j^Psi} (3), it follows that
\begin{align}
\frac{ \norm{ \Psi_j (X, \varphi) }_{h, T_j (X, \varphi)}  }{G_j^{\Psi} (X, \varphi) } \leq  \sum_{Y\in \cP_j (X)} (C \norm{\omega_j}_{\Omega_j} )^{|X \backslash Y|_j + |\operatorname{Comp}_j (Y)|} A^{-|X|_j} \leq C' \norm{\omega_j}_{\Omega_j} (A/2)^{-|X|_j} 
\label{eq:Psi_j^2_bound}
\end{align}
whenever $\norm{\omega}_{\Omega_j}$ is sufficiently small.  
%
This proves (1).
To show (2), take $X\in \cS_j$ and recall that $\Psi_j$ is $2\pi/\sqrt{\beta}$-periodic to decompose
\begin{align}\label{eq:Psi_j-decomp}
\Psi_j (X, \varphi) = \sum_{q\in \Z} \hat{\Psi}_{j,q} (X, \varphi)
\end{align}
where $\hat{\Psi}_{j,q}$ is the charge-$q$ term of $\Psi_j$. 
Then apply \eqref{eq:contraction_of_charge_q_term_external_field} to bound $\Eplus [ \hat{\Psi}_{j,q} (X, \varphi + \zeta) ]$ for $|q|\geq 1$ and \eqref{eq:gaussian_contraction_external_field} to bound $\Eplus [ \hat{\Psi}_{j,0} (X, \varphi + \zeta) - \hat{\Psi}_{j,0} (X, \varphi) ]$.
\end{proof}

\section{The renormalisation group map with external field}
\label{sec:rg_generic_step_external_field}

To prove the infinite-volume scaling limit we need an extended version of the renormalisation group maps that admits
an external field at every scale. In this section we extend the (bulk) renormalisation group map from \cite[Section~\ref{sec:rg_generic_step}]{dgauss1} to allow for such an external field.
The starting point is the generalisation of the parametrisation of the partition function 
from \cite[\eqref{eq:general_RG_step2}]{dgauss1}
to take into account a local perturbation.
In accordance with \eqref{eq:reblocking_Z_with_Psi}, partition functions will now be parametrised as 
\begin{equation} 
  Z_j (\varphi , \Psi_j ; (\Psi_k)_{k<j} | \Lambda_N ) =
  e^{-E_{j} |\Lambda_N| + e_j }
  \sum_{X\in \mathcal{P}_j (\Lambda_N)} e^{ U_j (\Lambda \backslash X, \varphi)} \prod_{Y \in \textnormal{Comp}_j (X)} \big( K_j (Y, \varphi ; (\Psi_k)_{k < j}) + \Psi_j (Y, \varphi) \big) , \label{eq:general_RG_step2_external_field}
\end{equation}
and where $e_j$ is a scalar coupling constant (like $E_j$), 
but originating from a bounded number of blocks near the origin.
Then the renormalisation group flow corresponding to 
\begin{equation}
  Z_{j+1}(\varphi' , 0 ; (\Psi_k)_{k\leq j} | \Lambda_N ) = \Eplus Z_j(\varphi' +\zeta , \Psi_j ; (\Psi_k)_{k<j} | \Lambda_N ),
  \qquad (j<N-1),
  \label{eq:general_RG_step1_external_field}
\end{equation}
will be considered.
Here recall that $\Eplus = \E_{\Gamma_{j+1}}$ for $j\leq N-1$ and
$\Eplus=\E_{\Gamma_N^{\Lambda_N}}$ for the last step $j=N-1$.

\subsection{Renormalisation group flow without external field}

When $\Psi_k \equiv 0$ for each $k<j$, then we will just denote $K_j (\cdot ; (\Psi_k)_{k<j})$ by $K_j (\cdot ; 0)$; this corresponds to the setting of \cite{dgauss1}.
Here we briefly recall the main estimates for the renormalisation group map in this setting from \cite[Sections 7 and 8]{dgauss1}.
This maps acts on the coupling constants $E_j \in \R$, 
$U_j$ of the form \eqref{eq:U_j_form}, and $K_j (\cdot ; 0)$ from  \cite[Sections~7 and~8]{dgauss1}.
In particular, $U_j$ can be identified with its coupling constants $s_j$ and $z_j  = (z_j^{(q)})_{q\geq 1}$.

Also, we use the abbreviations $\omega_j = (U_j, K_j)$ and $\norm{\omega_j}_{\Omega_j} = \max\{ \norm{U_j}_{\Omega_j^U}, \norm{K_j}_{\Omega_j^K} \}$,
where norms are still as in \cite[Definitions~\ref{def:U_space}--\ref{def:K_space}]{dgauss1} with the parameters 
they depend on always assumed to satisfy the conditions of Section~\ref{sec:parameters}.

The following theorem puts together \cite[Theorems~\ref{thm:H_j_E_j_estimate} and \ref{thm:local_part_of_K_j+1}]{dgauss1}
for $j+1 \leq N$
with its analogue \cite[Proposition~\ref{prop:final_RG_v2}]{dgauss1} for the last step $j+1=N$.

\begin{theorem} \label{thm:RG_without_external_field}
  Fix a finite-range step distribution $J$ as in Theorem~\ref{thm:highbeta-Z2}.
  There exist 
  $\epsilon_{nl} (\beta, A, L)$ such that the following holds for 
  $\norm{\omega_j}_{\Omega_j} \leq \epsilon_{nl}$.
For all $N$ and $0\leq j \leq N-1$, there is a map 
\begin{equation} \label{e:Phi-exist}
  \Phi^{\Lambda_N}_{j+1} = (\cE_{j+1}, \mathfrak{s}_{j+1}, \mathfrak{z}_{j+1}, \cK_{j+1}) : (E_{j}, s_j, z_j, K_j (\cdot ; 0)) \mapsto (E_{j+1}, s_{j+1}, z_{j+1}, K_{j+1} (\cdot ; 0)) , 
\end{equation}
such that  \eqref{eq:general_RG_step2_external_field}, \eqref{eq:general_RG_step1_external_field} hold with $e_j \equiv 0$ and $\Psi_k \equiv 0$, and $Z_0$ given by \eqref{eq:Z_0_definition-bis}.
The maps $\cE_{j+1} - E_j$, $\mathfrak{s}_{j+1}$, $\mathfrak{z}_{j+1}$, and $\cK_{j+1}$ are functions of $\omega_j$ satisfying
\begin{align}
& |\mathfrak{s}_{j+1} (\omega_j) - s_j | \leq C A^{-1} \norm{\omega_j}_{\Omega_j} , \\
& | (\cE_{j+1} - E_j ) (\omega_j) + s_j \nabla^{(e_1, -e_1)} \Gamma_{j+1} (0) | \leq C A^{-1} L^{-2j} \norm{\omega_j}_{\Omega_j} ,\\
& \mathfrak{z}_{j+1} (\omega_j) = L^2 e^{-\frac12 \beta q^2 \Gamma_{j+1}(0)} z_j,
\end{align}
for some $C>0$ and there exists $\epsilon_{nl} > 0$ such that whenever $\norm{\omega_j}_{\Omega_j} \leq \epsilon_{nl}$, $\cK_{j+1}$ is
continuously (Fr\'echet-)differentiable and admits a decomposition $\cK_{j+1} = \cL_{j+1} + \cM_{j+1}$ satisfying the estimates
\begin{align}
& \norm{\cL_{j+1} (\omega_j)}_{ \Omega_{j+1}^K} \leq C_1 L^2 \alpha_{\Loc} \norm{\omega_j}_{\Omega_j} \label{eq:bound_for_L_j_K_j_wo_external_field} \\
& \norm{\cM_{j+1} (\omega_j)}_{ \Omega_{j+1}^K} \leq C_2 (\beta, A, L)  \norm{\omega_j}_{\Omega_j}^2  \\
& \norm{D \cM_{j+1} (\omega_j)}_{ \Omega_{j+1}^K} \leq C_2 (\beta, A, L) \norm{\omega_j}_{\Omega_j}
\end{align}
for some $C_1, C_2( \beta,  A, L) >0$, where $\cL_{j+1}$ is linear in $\omega_j$ and
\begin{equation} 
\alphaLoc = CL^{-3}(\log L)^{3/2}+ C\min \Big\{1,\sum_{q\geq 1} e^{\sqrt{\beta}qh} e^{-(q-1/2)r\beta\Gamma_{j+1}(0)} \Big\}.
\end{equation}
\end{theorem}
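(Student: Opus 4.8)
The plan is to obtain Theorem~\ref{thm:RG_without_external_field} by assembling the three results quoted from \cite{dgauss1}, since with $\Psi_k\equiv 0$ for all $k<j$ the renormalisation group map in question coincides with the bulk map of \cite{dgauss1} together with its modified final step. First I would record that the parametrisation \eqref{eq:general_RG_step2_external_field} with $e_j\equiv 0$ and $\Psi_k\equiv 0$ for $k\le j$ reduces to the parametrisation of \cite[\eqref{eq:general_RG_step2}]{dgauss1}, so that \eqref{eq:general_RG_step1_external_field} is exactly the recursion analysed there and $Z_0$ is the same object \eqref{eq:Z_0_definition-bis}. The existence of the map $\Phi^{\Lambda_N}_{j+1}=(\cE_{j+1},\mathfrak{s}_{j+1},\mathfrak{z}_{j+1},\cK_{j+1})$ preserving this form, for $\|\omega_j\|_{\Omega_j}$ below a threshold $\epsilon_{nl}(\beta,A,L)$, is then provided directly by the construction of \cite[Section~\ref{sec:rg_generic_step}]{dgauss1}; one recalls that this map is built as reblocking, followed by Gaussian integration against $\Gamma_{j+1}$, followed by extraction of the relevant local part via $\Loc$ (which produces the updates of $E_j$, $s_j$, $z_j$) and a rescaling to scale $j+1$.

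For $0\le j\le N-2$ I would then simply read off the individual estimates. The bound $|\mathfrak{s}_{j+1}(\omega_j)-s_j|\le CA^{-1}\|\omega_j\|_{\Omega_j}$ and the bound on $(\cE_{j+1}-E_j)(\omega_j)+s_j\nabla^{(e_1,-e_1)}\Gamma_{j+1}(0)$ are the content of \cite[Theorem~\ref{thm:H_j_E_j_estimate}]{dgauss1}, while the exact identity $\mathfrak{z}_{j+1}(\omega_j)=L^2e^{-\frac12\beta q^2\Gamma_{j+1}(0)}z_j$ records componentwise that the charge-$q$ cosine coupling picks up the Gaussian factor $e^{-\frac12\beta q^2\Gamma_{j+1}(0)}$ from integration against $\Gamma_{j+1}$ and the factor $L^{d}=L^{2}$ from reblocking combined with the $L^{-2j}\mapsto L^{-2(j+1)}$ rescaling in \eqref{eq:U_j_form}, with $\Loc$ acting trivially on the cosine part to leading order. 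The decomposition $\cK_{j+1}=\cL_{j+1}+\cM_{j+1}$, with $\cL_{j+1}$ linear in $\omega_j$ and satisfying $\|\cL_{j+1}(\omega_j)\|_{\Omega_{j+1}^K}\le C_1L^2\alpha_{\Loc}\|\omega_j\|_{\Omega_j}$ (where $\alpha_{\Loc}$ is as displayed, with the $L^{-3}(\log L)^{3/2}$ term produced by the neutral Gaussian contraction after rescaling and the sum over $q$ by the charge-$q$ contraction estimates), together with the continuous Fr\'echet-differentiability of $\cK_{j+1}$ and the bounds $\|\cM_{j+1}(\omega_j)\|_{\Omega_{j+1}^K}\le C_2\|\omega_j\|_{\Omega_j}^2$ and $\|D\cM_{j+1}(\omega_j)\|_{\Omega_{j+1}^K}\le C_2\|\omega_j\|_{\Omega_j}$ with $C_2\equiv C_2(\beta,A,L)$, is exactly the content of \cite[Theorem~\ref{thm:local_part_of_K_j+1}]{dgauss1}.

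For the last step $j+1=N$, the covariance $\Gamma_{j+1}$ is replaced by $\Gamma_N^{\Lambda_N}$, which, as recorded below \eqref{e:Cdecomp-Z2'}, satisfies the same analytic estimates as $\Gamma_N$; the corresponding versions of all the above statements are then \cite[Proposition~\ref{prop:final_RG_v2}]{dgauss1}. It remains to collect the finitely many constants: I would take $\epsilon_{nl}(\beta,A,L)$ to be the minimum of the thresholds appearing in \cite[Theorems~\ref{thm:H_j_E_j_estimate},~\ref{thm:local_part_of_K_j+1}]{dgauss1} and \cite[Proposition~\ref{prop:final_RG_v2}]{dgauss1}, and $C,C_1,C_2(\beta,A,L)$ to be the largest of the constants supplied there; since the bulk estimates are uniform in $j$ and there is a single additional final step, this yields the stated bounds uniformly in $N$ and $0\le j\le N-1$.

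I do not anticipate a genuine obstacle here, as the theorem is a repackaging and no new analysis beyond \cite{dgauss1} is needed. The only points requiring care are (a) checking that \eqref{eq:general_RG_step2_external_field} with $e_j\equiv 0$ and $\Psi_k\equiv 0$ genuinely matches the parametrisation of \cite{dgauss1} so that the cited RG map applies verbatim (in particular that the coordinate $K_j(\cdot;0)$ is precisely the $K_j$ of \cite{dgauss1}), and (b) verifying that the bulk estimates and the final-step estimates can be phrased with a common threshold $\epsilon_{nl}$ and common constants; both are routine bookkeeping.
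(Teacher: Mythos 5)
Your proposal matches the paper's approach exactly: the paper gives no separate proof but states that the theorem ``puts together'' \cite[Theorems~\ref{thm:H_j_E_j_estimate} and~\ref{thm:local_part_of_K_j+1}]{dgauss1} for $j+1\le N$ with \cite[Proposition~\ref{prop:final_RG_v2}]{dgauss1} for the final step $j+1=N$, which is precisely the assembly you describe. Your additional observations---that setting $e_j\equiv 0$ and $\Psi_k\equiv 0$ reduces \eqref{eq:general_RG_step2_external_field} to the parametrisation of \cite{dgauss1}, and that one takes $\epsilon_{nl}$ as the minimum of the thresholds and the constants as the maxima over the three cited results---are the correct (if routine) bookkeeping needed to make the citation explicit.
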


The next theorem concerns the
existence of initial conditions independent of $N$ such that the renormalisation group
flow exists for all $N$, i.e., that for all $N \geq 1$ and all $j\leq N-1$,
\begin{equation}
  (E_{j+1}, s_{j+1},z_{j+1},K_{j+1}) = \Phi^{\Lambda_N}_{j+1}(E_j, s_j,z_j,K_j)
  \label{eq:abstract_RG_map-Z2}
\end{equation}
such that $\norm{(U_j, K_j)}_{\Omega_j} < \epsilon_{nl}$ for each $j \leq N$.
With $j \leq N-1$ instead of $j\leq N$, the theorem is exactly
\cite[Proposition~\ref{prop:stable_manifold}]{dgauss1},
and the bounds \eqref{eq:finalbounds} for $j=N$ follows from the bounds with $j=N-1$ by a single application of Theorem~\ref{thm:RG_without_external_field}.

\begin{theorem}
\label{thm:tuning_s}
  For any finite-range step distribution $J$ as in Theorem~\ref{thm:highbeta-Z2},
  there exist $\beta_0(J)\in(0,\infty)$,
  $s_0^c(J,\beta) = O(e^{-\frac{1}{4}\gamma \beta})$, and $\alpha=\alpha(J,\beta)>0$,
  such that for $\beta\geq \beta_0(J)$ the solution to  the flow equation \eqref{eq:abstract_RG_map-Z2} with parameter $s=s_0^c(J,\beta)$
  and initial conditions $s_0=s_0^c(J,\beta)$, $z_0 = \tilde z(\beta)$ as in \cite[Lemma~\ref{lemma:Fourier_repn_of_V}]{dgauss1}, and $K_0 (X)= 1_{X=\emptyset}$,  satisfies
  for all $j \leq N$ and $N> 1$,
  \begin{equation}\label{eq:finalbounds}
    \|U_j\|_{\Omega_j^U} \leq O(e^{-\frac14 \gamma \beta}L^{-\alpha j}), \qquad \|K_j\|_{\Omega_j^K} \leq O(e^{-\frac14 \gamma \beta}L^{-\alpha j}).
  \end{equation}
  In fact, one can take any $\alpha>0$ such that $C L^2 \alphaLoc \leq L^{-\alpha}$ for sufficiently large $C$.
\end{theorem}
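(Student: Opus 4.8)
The plan is to derive the statement almost entirely from material already in place. I would first observe that, when all external‑field insertions vanish—$\Psi_k\equiv0$ for $k<j$ and $e_j\equiv0$—the renormalisation group map $\Phi^{\Lambda_N}_{j+1}$ of Theorem~\ref{thm:RG_without_external_field} is by construction the map of \cite{dgauss1}: the parametrisation \eqref{eq:general_RG_step2_external_field} reduces to \cite[\eqref{eq:general_RG_step2}]{dgauss1} and the recursion \eqref{eq:general_RG_step1_external_field} to \cite[\eqref{eq:general_RG_step1}]{dgauss1}. Consequently the flow equation \eqref{eq:abstract_RG_map-Z2} with the prescribed data $s_0=s_0^c(J,\beta)$, $z_0=\tilde z(\beta)$, $K_0(X)=1_{X=\emptyset}$ is exactly the flow analysed in \cite[Proposition~\ref{prop:stable_manifold}]{dgauss1}, which supplies $\beta_0(J)$, $s_0^c(J,\beta)=O(e^{-\frac14\gamma\beta})$, and $\alpha>0$ together with the bounds \eqref{eq:finalbounds} \emph{for all $j\leq N-1$ and $N>1$}, as well as the admissibility of any $\alpha$ with $L^2\alphaLoc=O(L^{-\alpha})$. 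I would simply invoke that proposition for these scales.

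For orientation, \cite[Proposition~\ref{prop:stable_manifold}]{dgauss1} rests on a stable‑manifold (fixed‑point) argument in a Banach space of sequences $(\omega_j)_j$ weighted by $e^{\frac14\gamma\beta}L^{\alpha j}$: the cosine coefficients $z_j$ contract super‑exponentially since $\mathfrak z_{j+1}(\omega_j)=L^2 e^{-\frac12\beta q^2\Gamma_{j+1}(0)}z_j$ with $\Gamma_{j+1}(0)\geq c\log L$, so that $L^2 e^{-\frac12\beta\Gamma_{j+1}(0)}\ll1$ for $\beta\geq\beta_0(J)$; the error coordinate $K_j$ contracts because the linear part $\cL_{j+1}$ carries a gain $L^2\alphaLoc=O(L^{-\alpha})\ll1$ while $\cM_{j+1}$ is quadratic; and the single direction requiring tuning is the scalar $s_j$, for which one solves a scalar fixed‑point equation to pick out the critical value $s_0^c$, whose size $O(e^{-\frac14\gamma\beta})$ is inherited from that of $z_0=\tilde z(\beta)$ (the Fourier coefficients of the periodic potential $\tilde U$, exponentially small in $\beta$). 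The absence of any expanding direction at high temperature is what makes the construction go through, and is also why the same $\alpha\in(0,1)$ works for $L$ large.

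It then remains to upgrade the bounds from $j\leq N-1$ to $j=N$, by a single further application of Theorem~\ref{thm:RG_without_external_field}. Since $\norm{(U_{N-1},K_{N-1})}_{\Omega_{N-1}}<\epsilon_{nl}$ by the above, the map $\Phi^{\Lambda_N}_N$—which uses $\Gamma_N^{\Lambda_N}$ in place of $\Gamma_N$ but is covered by the theorem—yields $(E_N,s_N,z_N,K_N)$. The new cosine coefficients $z_N=\mathfrak z_N(\omega_{N-1})=L^2 e^{-\frac12\beta q^2\Gamma_N^{\Lambda_N}(0)}z_{N-1}$ are $O(e^{-\frac14\gamma\beta}L^{-\alpha N})$ with room to spare, which together with $|\mathfrak s_N(\omega_{N-1})-s_{N-1}|\leq CA^{-1}\norm{\omega_{N-1}}_{\Omega_{N-1}}$ controls $\norm{U_N}_{\Omega_N^U}$, and
\[
  \norm{\cK_N(\omega_{N-1})}_{\Omega_N^K}\leq C_1 L^2\alphaLoc\,\norm{\omega_{N-1}}_{\Omega_{N-1}}+C_2(\beta,A,L)\norm{\omega_{N-1}}_{\Omega_{N-1}}^2 = O(L^{-\alpha})\cdot O(e^{-\frac14\gamma\beta}L^{-\alpha(N-1)}),
\]
the quadratic term being dominated by the linear one for $\beta\geq\beta_0(J)$—enlarging $\beta_0(J)$ if needed, which is legitimate because $N>1$ forces $L^{-\alpha(N-2)}\leq1$. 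Hence $\norm{K_N}_{\Omega_N^K}=O(e^{-\frac14\gamma\beta}L^{-\alpha N})$ and \eqref{eq:finalbounds} holds for all $j\leq N$.

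I expect there to be essentially no new obstacle: the analytic content—the contraction estimates underlying Theorem~\ref{thm:RG_without_external_field} and the fixed‑point construction of \cite[Proposition~\ref{prop:stable_manifold}]{dgauss1}—is all imported from \cite{dgauss1}. The only point genuinely requiring attention is the bookkeeping in the previous paragraph, i.e.\ confirming that the extra step at scale $j=N$ neither violates the smallness hypothesis of Theorem~\ref{thm:RG_without_external_field} nor degrades the decay rate $L^{-\alpha j}$, which is routine given the explicit form of the estimates there.
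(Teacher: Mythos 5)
Your proposal is correct and takes the same route as the paper: it reduces the claim for $j\le N-1$ to \cite[Proposition~\ref{prop:stable_manifold}]{dgauss1} (noting that the extended map with $\Psi_k\equiv 0$, $e_j\equiv 0$ coincides with the bulk map from the first paper), and then obtains the $j=N$ case by one further application of Theorem~\ref{thm:RG_without_external_field} with $\Gamma_N^{\Lambda_N}$. The paper states this even more tersely, and the extra bookkeeping you supply for the final step (absorbing the harmless $L^{\alpha}$ loss and the quadratic term into the implicit constant in the $O(\cdot)$, enlarging $\beta_0(J)$ if necessary) is consistent with, and merely spells out, the intended argument.
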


\subsection{Extended coordinates}

{We next define the extended renormalisation group coordinates that incorporate a perturbation $\Psi$.
First,} recall the definition of polymer activities
from \cite[Definition~\ref{def:polymeractivity}]{dgauss1} and the definition of (bulk) renormalisation
group coordinates $K_j$ from \cite[Definition~\ref{def:K_space}]{dgauss1}.
{The extended version of the $K$-coordinate is then defined as follows.}

\begin{definition} 
  The coordinate $\KK_j = ( K_j(\cdot ; 0),  K_j(\cdot ; (\Psi_k)_{k<j} ) ) $ is a 
  pair of $2\pi/\sqrt{\beta}$-periodic polymer activities such that $K_j (X, \varphi ; 0)$ is even and
  invariant under the lattice symmetries.
  For pairs of such polymer activities, define
  \begin{equation}
    \| \KK_j \|_{\Omega_j^{\KK}} = \max \{ \norm{K_j (\cdot ; 0)}_{h, T_j}, \norm{K_j (\cdot ; (\Psi_k)_{k < j} )}_{h, T_j} \} .
\end{equation}
Let $\Omega_j^{\KK}$ be the Banach space (cf.~\cite[Appendix~\ref{app:completeness}]{dgauss1}) of such pairs where the maximum is finite.
\end{definition}

We also need a new definition of the product space of $(U_j,\KK_j, \Psi_j)$ as follows.

\begin{definition} [Extended coordinates] \label{def:extended_Omega_j}
Define the normed space 
of polymer activity perturbations based at the origin by
\begin{align}
\Omega_j^{\Psi} = \{ \Psi_j \text{ is $2\pi/\sqrt{\beta}$-periodic} \, : \norm{\Psi_j}_{h, T_j^{\Psi}} < \infty, \; \Psi_j (X) = 0 \textnormal{ if } 0\not\in X^* \}
\end{align}
equipped with the norm $\norm{\cdot}_{\Omega_j^{\Psi}} = \norm{\cdot}_{h, T_j^{\Psi}}$.
Also let $\bar{\Omega}_j = \Omega_j^U \times \Omega_j^{\vec{K}} \times \Omega_j^{\Psi}$, i.e.,
\begin{equation} \label{eq:final-norm}
  \bar{\Omega}_j = \{ \omega_j = (U_j, \KK_j, \Psi_j ) : \norm{\omega_j}_{\bar{\Omega}_j} < + \infty \},
  \quad
  \norm{\omega_j}_{\bar{\Omega}_j} = \max\{ \norm{U_j}_{\Omega_j^U},  \norm{\KK_j}_{\Omega_j^{\vec{K}}}, \norm{\Psi_j}_{\Omega_j^{\Psi}} \} .
\end{equation}
Given $\epsilon_{\Psi}, C_{\Psi} >0$
also define $\cY_j \equiv \cY_j (\epsilon_{\Psi}, C_{\Psi}) \subset \bar{\Omega}_j$ be the closed subset defined by the conditions
\begin{itemize}
\item[(1)] $K_j (X, \varphi; 0) = K_j (X, \varphi ; (\Psi_k)_{k<j})$ if $0\not\in X^*$; 
\item[(2)] $\norm{\Psi_j}_{\Omega_j^{\Psi}} \leq C_{\Psi} \, \max\{ \norm{U_j}_{\Omega_j^U},  \norm{\KK_j}_{\Omega_j^{\vec{K}}} \}$ 
and $\norm{\omega_j}_{\bar{\Omega}_j} \leq \epsilon_{\Psi}$ ;
\item[(3)] For $X\in \cS_j$, 
\begin{equation}
\norm{\Eplus [ {\Psi}_{j} (X, \cdot + \zeta) - \hat{\Psi}_{j,0} (X, \zeta) ] }_{h, T_{j+1} (\bar{X})} \leq C_{\Psi} A^{-|X|_j} \alphaLoc^{\Psi} \norm{ \omega_j }_{\bar{\Omega}_j}  \label{eq:Psi_j_condition}
\end{equation}
with $\alphaLoc^{\Psi}$ as defined below Lemma~\ref{lemma:G_j^Psi_is_Psi_regulator} (2).
\end{itemize}
\end{definition}

In particular, if we define $\Psi_j = \cF_{\Psi} [u_j, K_j(\cdot ; (\Psi_k)_{k<j}), U_j ; j ]$ for given $U_j$, $\KK_j$, 
then, if their assumptions are satisfied, Proposition~\ref{prop:reblocking_Z_with_Psi} and Lemma~\ref{lemma:G_j^Psi_is_Psi_regulator} imply
$(U_j, K_j (\cdot ; 0), K_j (\cdot ; (\Psi_k)_{k<j}), \Psi_j) \in \cY_j (\epsilon_{\Psi}, C_{\Psi})$ for some $\epsilon_{\Psi}, C_{\Psi} >0$ whenever $\norm{U_j}_{\Omega_j^U}, \norm{\KK_j}_{\Omega_j^{\KK}} \leq \epsilon_{\Psi}$.

\subsection{Definition of the extended renormalisation group map}

We will now introduce the extended renormalisation group map with the extra coordinates $K_j (\cdot ; (\Psi_k)_{k<j})$ and $\Psi_j$, which we denote by
\begin{equation}
  \bar{\Phi}_{j+1} : (E_j, e_j, s_j,  z_j, \KK_j, \Psi_j) \mapsto (E_{j+1}, e_{j+1}, s_{j+1},  z_{j+1}, \KK_{j+1}, 0)
\end{equation}
(cf. \cite[\eqref{eq:Phi_j+1_definition}]{dgauss1} for $\Phi_{j+1}$ which we now call the bulk part of the renormalisation group map);
{here the $e_j$ are scalar coupling constants taking the role for the perturbation due to the external field that the $E_j$ have for the bulk part of the renormalisation group map. In analogy with}
$\Phi_{j+1}$, we will also denote the components of the map $\bar \Phi_{j+1}$ by $(\cE_{j+1}, \mathfrak{e}_{j+1}, \cU_{j+1}, \cK^0_{j+1}, \cK^{\Psi}_{j+1})$ 
{and require that}
\begin{equation} \label{eq:energy-renorm}
(\mathfrak{e}_{j+1} - \cE_{j+1} |\Lambda|)(E_j, e_j, \cdot) = (\mathfrak{e}_{j+1} - \cE_{j+1} |\Lambda|) (0, 0, \cdot) + e_{j} - E_{j} |\Lambda|.
\end{equation}
{The last condition can be imposed because} the scalar prefactor $e^{-E_j|\Lambda| + e_j}$ appearing in $Z_j$ (see \eqref{eq:general_RG_step2_external_field}) is mapped to the corresponding quantity at scale $j+1$ and hence does not contribute to the dynamics, {see the discussion below \cite[\eqref{eq:Phi_j+1_definition}]{dgauss1}
for the bulk case.}
Moreover, when we write $\mathfrak{e}_{j+1}$, $\cE_{j+1}$ without $e_j, E_j$ as their arguments, they are just $\mathfrak{e}_{j+1} (0, 0,\cdot)$ and $\cE_{j+1} (0,0, \cdot)$ respectively.

We are thinking of $\bar{\Phi}_{j+1}$ as $\Phi_{j+1}$ with a perturbation, which entails that the $\cE_{j+1}$, $\cU_{j+1}$ and $\cK_{j+1}^0$ will be given as in \cite[Section~\ref{sec:rg_generic_step}]{dgauss1},
i.e., by Definitions~\ref{def:evolution_of_U} and~\ref{def:evolution_of_remainder} in that paper respectively.
The other coordinates $\mathfrak{e}_{j+1}$ and $\cK_{j+1}^{\Psi}$ are defined explicitly as follows.
{The definition of $\cK_{j+1}^\Psi$ is almost the same as that of $\cK_{j+1}$ except for the perturbed activity $\Psi_j$ and the one-point energy $\mathfrak{e}_{j+1}$ arising from it.
}
\begin{definition}
\label{def:evolution_of_e_j_K_j}
For $0\leq j\leq N-1$, let $\zeta$ be the centred Gaussian random variable with covariance $\Gamma_{j+1}$ if $j\leq N-2$ and $\Gamma_N^{\Lambda_N}$ if $j= N-1$. Then for each $Y \in \cP_j$,
define the map $(\KK_j, \Psi_{j}) \mapsto \mathfrak{e}_{j+1} (\KK_j, \Psi_j)$ by 
\begin{equation}
	\mathfrak{e}'_{j+1} (Y,\KK_j, \Psi_j) = \sum_{B\in \cB_j (B_0^* \cap Y)} \sum_{Z \in \cS_j}^{B\subset Z} \frac{1}{|Z \cap B_0^*|_j} \Eplus [ 
\hat{\Psi}_{j,0} (Z, \zeta) + \hat{K}_{j, 0} (Z, \zeta ; (\Psi_k)_{k<j}) - \hat{K}_{j, 0} (Z, \zeta ; 0) ] 
	\label{eq:e_j+1_definition}
\end{equation}
where we recall that $B_0$ is the unique $j$-block such that $0\in B_0$ and let
\begin{align}
\mathfrak{e}_{j+1} (\KK_j, \Psi_j) = \mathfrak{e}'_{j+1} (\Lambda_N,\KK_j, \Psi_j)
.
\end{align}
The map $(U_j, \KK_j, \Psi_j )\mapsto K_{j+1}^{\Psi}$ is defined by
\begin{align}\label{eq:expression_for_K_j+1_external_field}
  \mathcal{K}_{j+1}^{\Psi} & (U_j, \KK_j,  \Psi_j, X )
  = \sum_{X_0, X_1, Z, (B_{Z''})}^{*} e^{ \cE_{j+1} |T| - \mathfrak{e}_{j+1} (T) } e^{\cU_{j+1} (X \backslash T)} \\\nonumber
  & \times \Eplus \Big[ (e^{U_j} - e^{ -\cE_{j+1} |B| +  \mathfrak{e}'_{j+1} (B) + \cU_{j+1}})^{X_0} (\bar{K}^{\Psi}_j - \mathcal{E}^{\Psi} K_j)^{[X_1]}  \Big] \prod_{Z'' \in \operatorname{Comp}_{j+1} (Z)} J^{\Psi}_j (B_{Z''}, Z''), 
\end{align}
where the polymer powers follow the convention \cite[\eqref{eq:polymer_power_convention1}, \eqref{eq:polymer_power_convention2}]{dgauss1},  the summation $*$ is running over disjoint $(j+1)$-polymers $X_0, X_1, Z$ such that
$X_1 \not\sim Z$,  $B_{Z''} \in \cB_{j+1} (Z'')$ for each $Z'' \in \operatorname{Comp}_{j+1} (Z)$,
$T = X_0 \cup X_1 \cup Z$ and $X = \cup_{Z''}B^*_{Z''} \cup X_0 \cup X_1$,  and
\begin{align}
\mathcal{E}^{\Psi} K_j ( X, \varphi' ) &= \sum_{B\in \mathcal{B}_{j+1} (X)} J^{\Psi}_j (B, X, \varphi') \label{eq:cEK^Psi_definition} \\
 Q^{\Psi}_j ( D, Y, \varphi') & = 1_{Y\in \cS_j} \Big( \Loc_{Y, D} \Eplus [  K_{j} ( Y, \varphi' + \zeta ; 0 ) ] 
 \\
& 
\qquad \qquad + \frac{ 1_{D \subset \cB_j (B_0^* \cap Y )} }{|Y \cap B_0^*|_j} \Eplus [  \hat{\Psi}_{j,0} (Y, \zeta)  +  \hat{K}_{j, 0} (Y, \zeta ; (\Psi_k)_{k<j}) - \hat{K}_{j, 0} (Y, \zeta ; 0)  ]  \Big) 
\nnb
J^{\Psi}_j (B, X, \varphi')  &= 1_{B\in \cB_{j+1} (X)} \sum_{D\in \cB_j (B)} \sum_{Y\in \cS_j}^{D\in \cB_j (Y)} Q_j^{\Psi} (D, Y, \varphi') ( 1_{\bar{Y} = X} - 1_{B=X} ) . \label{eq:J_j^Psi_definition} \\
\bar{K}_j^{\Psi} (X, \varphi' + \zeta) &= \sum_{Y\in \mathcal{P}_j}^{\bar{Y}=X} e^{U_j (X \backslash Y, \varphi' + \zeta)} \Big( { ( K_j (Y, \varphi' + \zeta ; (\Psi_k)_{k<j})  + \Psi_j  (Y, \varphi' + \zeta) ) } \Big)  \label{eq:K_bar^Psi_definition}
\end{align}

for $D\in \cB_j$, $B\in \cB_{j+1}$, $Y\in \cP_j$ and $X\in \cP_{j+1}$. 
\end{definition}

Note that each $(j+1)$-block $B_{Z''}$ appearing in the summation defining $\mathcal{K}_{j+1}^{\Psi}$ is such that $Z'' \in B_{Z''}^*$ since $J^{\Psi}_j (B_{Z''}, Z'', \varphi')$ vanishes whenever $Z'' \notin \cS_{j+1}$.

In the remainder of the argument, we will focus on the case $j \leq N-2$, and hence $\zeta \sim \cN (0, \Gamma_{j+1})$. The argument is identical for the case $j\leq N-1$ because $\Gamma_N^{\Lambda_N}$ satisfies the same estimates as $\Gamma_N$.

The next theorem is the extension of \cite[Theorem~\ref{thm:general_RG_step_consistent}]{dgauss1}
with essentially the same proof; see Appendix~\ref{app:reblocking} for the proof.
It shows that $Z_{j+1}$ defined by the map $\bar\Phi_{j+1}$ is indeed the desired partition function of scale $j+1$.

\begin{theorem} \label{thm:general_RG_step_consistent_external_field}
Let $Z_j (\varphi, \Psi_j ; (\Psi_k)_{k<j} | \Lambda)$ and $Z_{j+1} (\varphi', 0 ; (\Psi_k)_{k\leq j} | \Lambda)$ be defined by \eqref{eq:general_RG_step2_external_field} with coordinates $(E_j, e_j, U_j, \KK_j, \Psi_j)$ and $(E_{j+1}, e_{j+1}, U_{j+1} \KK_{j+1}, 0) = \bar{\Phi}_{j+1} (E_j, e_j, U_j, \KK_j, \Psi_j)$ respectively.
Then they satisfy
\eqref{eq:general_RG_step1_external_field} (and \eqref{eq:energy-renorm} holds).
\end{theorem}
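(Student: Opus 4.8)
The proof follows the scheme of \cite[Theorem~\ref{thm:general_RG_step_consistent}]{dgauss1}, now carrying the extra coordinate $\Psi_j$ and the scalar $e_j$ through the reblocking. Starting from the parametrisation \eqref{eq:general_RG_step2_external_field} of $Z_j(\cdot,\Psi_j;(\Psi_k)_{k<j}\mid\Lambda)$, I would apply $\Eplus$ and first perform the change of $U$: on each $j$-block $B\subset\Lambda\setminus X$ write $e^{U_j(B,\varphi)}$ as $e^{-\cE_{j+1}|B|+1_{0\in B}\mathfrak e_{j+1}+\cU_{j+1}(B,\cdot)}$ plus the difference $e^{U_j}-e^{-\cE_{j+1}|B|+1_{0\in B}\mathfrak e_{j+1}+\cU_{j+1}}$, the latter being exactly the factor entering the $X_0$-polymer power in \eqref{eq:expression_for_K_j+1_external_field}. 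This is the step from \cite{dgauss1}, except that the scale-$(j+1)$ one-point normalisation now carries the extra indicator $1_{0\in B}\mathfrak e_{j+1}$; because $\mathfrak e_{j+1}$ and $\cE_{j+1}$ are defined (in \eqref{eq:e_j+1_definition} and \cite[Definition~\ref{def:evolution_of_U}]{dgauss1}) with $e_j=E_j=0$, the prefactor $e^{-E_j|\Lambda|+e_j}$ of $Z_j$ is reproduced at scale $j+1$ up to the scale-independent shift $e_j-E_j|\Lambda|$, which is precisely the content of \eqref{eq:energy-renorm}.

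Next I would reblock the polymer activities. Expand the product $\prod_{Y\in\textnormal{Comp}_j(X)}(K_j(Y;(\Psi_k)_{k<j})+\Psi_j(Y))$, absorb the $e^{U_j}$ factors on adjacent $j$-blocks into $\bar K_j^\Psi$ of \eqref{eq:K_bar^Psi_definition}, extract from the small sets whose closure meets the reblocked polymer the local parts $Q_j^\Psi$ of Definition~\ref{def:evolution_of_e_j_K_j} — namely $\Loc_{Y,D}\Eplus[K_j(Y;0)]$ together with, when $D\ni0$, the charge-zero parts of $\Psi_j$ and of $K_j(\cdot;(\Psi_k)_{k<j})-K_j(\cdot;0)$ — distributed over $(j+1)$-blocks by $J_j^\Psi$ and subtracted off through $\mathcal E^\Psi K_j$ of \eqref{eq:cEK^Psi_definition}, so that the $X_1$-polymer carries $\bar K_j^\Psi-\mathcal E^\Psi K_j$ and the $Z$-polymer carries the last product in \eqref{eq:expression_for_K_j+1_external_field}. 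Since $\Psi_j(Y)=0$ unless $0\in Y$ and $K_j(\cdot;(\Psi_k)_{k<j})=K_j(\cdot;0)$ away from the origin (property (1) of $\cY_j$), every genuinely new term is supported near the origin, so away from the origin the expansion is literally that of \cite{dgauss1}. Collecting the scalar contributions of $(j+1)$-polymers that reblock to a single block: those not meeting the origin go into $\cE_{j+1}$ as in \cite{dgauss1}, the single origin-block contribution $\sum_{0\in Y\in\cS_j}\Eplus[\hat\Psi_{j,0}(Y,\zeta)+\hat K_{j,0}(Y,\zeta;(\Psi_k)_{k<j})-\hat K_{j,0}(Y,\zeta;0)]$ goes into $\mathfrak e_{j+1}$ per \eqref{eq:e_j+1_definition}, and what remains, after factoring into connected $(j+1)$-components and renaming the summation variables to $(X_0,X_1,Z,(B_{Z''}))$ with the adjacency constraints following \eqref{eq:expression_for_K_j+1_external_field}, is by inspection exactly $\cK_{j+1}^\Psi(U_j,\KK_j,\Psi_j,\cdot)$; the bulk coordinate $\cK_{j+1}^0$ is produced verbatim as in \cite{dgauss1}. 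This yields \eqref{eq:general_RG_step1_external_field} with scale-$(j+1)$ perturbation $0$, and periodicity of $K_{j+1}$ is inherited from that of $K_j,\Psi_j,U_j$ while lattice-symmetry invariance of $K_{j+1}(\cdot;0)$ is unchanged.

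The work is combinatorial bookkeeping rather than estimation, so the main obstacle is to check that the reblocking identity closes \emph{exactly} with the coordinate definitions of Definition~\ref{def:evolution_of_e_j_K_j}: one must verify that every scalar produced by a trivial $(j+1)$-polymer lands in $\cE_{j+1}$ or in $\mathfrak e_{j+1}$ (and in the latter only when the origin is involved), that the indicators $1_{0\in T}$ and $1_{0\in B}$ in \eqref{eq:expression_for_K_j+1_external_field} are placed so that no origin contribution is double-counted or lost, and that the output $\cK_{j+1}^\Psi$ together with $\cK_{j+1}^0$ still satisfies property (1) of $\cY_{j+1}$ so that the structure propagates. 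Once the origin-localisation of $\Psi_j$ and of $K_j(\cdot;(\Psi_k)_{k<j})-K_j(\cdot;0)$ is exploited, all terms not touching the origin reduce verbatim to \cite[Theorem~\ref{thm:general_RG_step_consistent}]{dgauss1}, and the claim follows.
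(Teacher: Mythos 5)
Your proposal is correct and follows the same reblocking argument as the paper's proof: reblock to $(j+1)$-polymers, swap $e^{U_j}$ for the scale-$(j+1)$ potential plus a difference, swap in $\mathcal E^\Psi K_j$, and identify the resulting coefficients with $\cE_{j+1}$, $\mathfrak e_{j+1}$, and $\cK_{j+1}^\Psi$, using the origin-localization of $\Psi_j$ and of $K_j(\cdot;(\Psi_k)_{k<j})-K_j(\cdot;0)$ to see that only the new scalar $\mathfrak e_{j+1}$ and an origin-supported piece of $\cK_{j+1}^\Psi$ differ from the bulk case. One small slip: the $U$-replacement identity must be applied per $(j+1)$-block (not per $j$-block) of $\Lambda\setminus X'$, which requires reblocking to $(j+1)$-polymers first as the paper does, since it is the $(j+1)$-block polymer power $(e^{U_j}-e^{-\cE_{j+1}|B|+1_{0\in B}\mathfrak e_{j+1}+\cU_{j+1}})^{X_0}$ that appears in \eqref{eq:expression_for_K_j+1_external_field}.
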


\subsection{Estimates for the extended renormalisation group map}

Since we have already established estimates on
the bulk components $\cE_{j+1}$, $\cU_{j+1}$, and $\cK_j^0 \equiv \cK_j (\cdot ; 0)$
of the renormalisation group map in \cite[Theorems~\ref{thm:H_j_E_j_estimate} and~\ref{thm:local_part_of_K_j+1}]{dgauss1}, we only need additional estimates for $\mathfrak{e}_{j+1}$ and $\cK^{\Psi}_{j+1}$. Since we will not need a stable manifold theorem to tune parameters, a cruder control of these suffices.

\begin{theorem}
\label{thm:e_j_estimate}
Let $(u_j)_j$ satisfy \ref{assump:u} and 
the parameters be as in Section~\ref{sec:parameters}.
If $(U_j, \KK_j, \Psi_j) \in \cY_j (\epsilon, C_{\Psi})$, for some $\varepsilon > 0$
and 
$C_{\Psi}$ as given by Proposition~\ref{prop:key_prop_G_j^Psi},
\begin{equation}
  | \mathfrak{e}'_{j+1} (B,\Psi_j, \KK_j)  | \leq  C C_{\Psi} A^{-1} \norm{\omega_j}_{\bar{\Omega}_j},
  \quad B\in \cB_j
   .  \label{eq:e_j+1_bound} 
\end{equation}
\end{theorem}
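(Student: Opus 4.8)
The quantity $\mathfrak{e}_{j+1}(\Psi_j,\KK_j)$ is the sum over small sets $X$ containing the origin of the three Gaussian expectations appearing in \eqref{eq:e_j+1_definition}. The strategy is to bound each term of the sum uniformly and then sum over $0\in X\in\cS_j$ using that the number of small sets of a given size containing a fixed point is summable against $A^{-|X|_j}$ (this is the standard large-set regulator estimate, as in \cite{dgauss1}). So the proof reduces to controlling, for a fixed $X\in\cS_j$ with $0\in X$, the absolute value of
\[
\Eplus\big[\hat{\Psi}_{j,0}(X,\zeta)\big]
\quad\text{and}\quad
\Eplus\big[\hat{K}_{j,0}(X,\zeta;(\Psi_k)_{k<j}) - \hat{K}_{j,0}(X,\zeta;0)\big].
\]

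First I would handle the $\Psi_j$ term. By condition (3) in the definition of $\cY_j$ (i.e.\ \eqref{eq:Psi_j_condition}) applied with $\varphi'=0$, together with $\hat{\Psi}_{j,0}(X,\zeta) = \Eplus[\hat\Psi_{j,0}(X,\cdot+\zeta)]$ being constant in the outer field (neutral charge-$0$ part is independent of the shift only after the Gaussian integration — more precisely one uses the bound on $\Eplus[\Psi_j(X,\cdot+\zeta)-\hat\Psi_{j,0}(X,\zeta)]$ at $\varphi'=0$ plus a direct bound on $\Eplus[\Psi_j(X,\zeta)]$ via $\norm{\Psi_j(X)}_{h,T_j^\Psi(X)}$ and $\Eplus[G_j^\Psi(X,\zeta)]\le C_\Psi 2^{|X|_j}$ from \eqref{eq:analogue-Prop5.9}, evaluated via $\norm{\cdot}_{h,T_j(X,\zeta)}$ controlling the function value), one gets $|\Eplus\hat\Psi_{j,0}(X,\zeta)| \le C\, C_\Psi\, 2^{|X|_j}\norm{\Psi_j}_{h,T_j^\Psi} \le C C_\Psi 2^{|X|_j}\norm{\omega_j}_{\bar\Omega_j}$. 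Actually it is cleaner to bound $|\hat\Psi_{j,0}(X,\zeta)| \le \norm{\Psi_j(X)}_{h,T_j^\Psi(X)} G_j^\Psi(X,\zeta)$ pointwise (the charge-$0$ projector has norm $\le 1$), then take $\Eplus$ and use \eqref{eq:analogue-Prop5.9}; the extra factor $2^{|X|_j}$ is absorbed into the $A$-sum since $A$ is large.

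For the $K_j$-difference term I would argue that since $(U_j,\KK_j,\Psi_j)\in\cY_j$, condition (1) forces $K_j(X,\cdot;0) = K_j(X,\cdot;(\Psi_k)_{k<j})$ whenever $0\notin X$; hence the difference $\hat{K}_{j,0}(X,\zeta;(\Psi_k)_{k<j}) - \hat{K}_{j,0}(X,\zeta;0)$ vanishes unless $0\in X$, which is already imposed by the summation range, so this does not by itself gain smallness — instead one bounds each of the two terms separately by $\norm{K_j(\cdot;\bullet)}_{h,T_j} G_j(X,\zeta) A^{-|X|_j}$ and uses $\norm{\KK_j}_{\Omega_j^{\KK}}\le\norm{\omega_j}_{\bar\Omega_j}$, then takes $\Eplus$ against $G_j$. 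Summing $A^{-|X|_j}2^{|X|_j}$ over $0\in X\in\cS_j$ gives a convergent geometric series bounded by $CA^{-1}$ (the first nontrivial small set has $|X|_j\ge 1$; the single block $X=B_0$ contributes the leading $A^{-1}$, and the $1/2$-vs-$1$ discrepancy between $G_j^\Psi$'s $A/2$ and $G_j$'s $A$ is harmless once $A$ is large). Collecting, $|\mathfrak{e}_{j+1}| \le C C_\Psi A^{-1}\norm{\omega_j}_{\bar\Omega_j}$.

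**Main obstacle.** The delicate point is extracting the factor $A^{-|X|_j}$ cleanly from the $\norm{\cdot}_{h,T_j^\Psi}$ norm (which carries $(A/2)^{|X|_j}$, not $A^{|X|_j}$) while still getting a summable series with a genuine $A^{-1}$ out front rather than merely $O(1)$; this is exactly the combinatorial bookkeeping the remark before the $\Psi$-norm definition flags (the $A\to A/2$ change "compensates a combinatorial factor coming from reblocking"), and one must check the number of small sets containing $0$ with $|X|_j=n$ grows slower than $(A/2)^n$ — true for $A\ge A_0(L)$ large. The rest is a routine application of \eqref{eq:analogue-Prop5.9}, the pointwise regulator domination $|\hat F_0(X,\varphi)|\le\norm{F(X)}_{h,T_j^*(X)}G_j^*(X,\varphi)$, and the definition of $\cY_j$.
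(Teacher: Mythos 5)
Your proposal is correct and, after you discard the false start with condition (3) of $\cY_j$ in favor of the "cleaner" pointwise bound $|\hat\Psi_{j,0}(X,\zeta)| \le \norm{\Psi_j(X)}_{h,T_j^\Psi(X)}\,G_j^\Psi(X,\zeta)$ followed by \eqref{eq:analogue-Prop5.9} and the triangle inequality for the $K$-difference, it matches the paper's proof essentially step for step. The one simplification the paper makes that you could have used to avoid the geometric-series discussion is that $|X|_j\le 4$ for $X\in\cS_j$, so $\Eplus[G_j^\Psi(X,\zeta)]\le C_\Psi 2^{|X|_j}\le 16\,C_\Psi$ is a uniform constant and the sum over $0\in X\in\cS_j$ of $(A/2)^{-|X|_j}$ is trivially $O(A^{-1})$.
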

\begin{proof}
Let $X \in \cS_j$ be such that $0 \in X^*$ and $B \in \cB_j (X)$.
By \eqref{eq:Psi_j-decomp} and the definition of $\norm{\cdot}_{\Omega^\Psi_j} (= \norm{\cdot}_{h, T_j^{\Psi}})$, see \eqref{eq:Omega_j^Psi_norm_def_X}-\eqref{eq:Omega_j^Psi_norm_def},
\begin{equation}
	\big| \Eplus [ \hat{\Psi}_{j, 0} (X, \zeta) ] \big| \leq (A/2)^{-|X|_j} \norm{\Psi_j}_{\Omega_j^{\Psi}} \Eplus [ G_j^{\Psi} (X, \zeta) ]
\end{equation}
and by the assumption $(U_j, \KK_j, \Psi_j) \in \cY_j$, 
we also have $\norm{\Psi_j}_{\Omega_j^{\Psi}} \leq C_{\Psi} \norm{\omega_j}_{\bar{\Omega}_j}$.
Similarly, 
\begin{align}
	\big| \Eplus [ \hat{K}_{j, 0} (X, \zeta ; (\Psi_k)_{k<j} ) - \hat{K}_{j, 0} (X, \zeta ; 0) ] \big| \leq 2 A^{-|X|_j} \norm{\KK_j}_{\Omega_j^{\vec{K}}} \Eplus [G_j (X, \zeta)]
\end{align}
and note that $\norm{\KK_j}_{\Omega_j^{\vec{K}}} \leq \norm{\omega_j}_{\bar{\Omega}_j} $ by definition, see \eqref{eq:final-norm}.
By Proposition~\ref{prop:key_prop_G_j^Psi} and since $|X|_j \leq 4$ for $X \in \mathcal{S}_j$, we have that
\begin{align}
\Eplus [G_j (X, \zeta)] \leq \Eplus [G_j^{\Psi} (X, \zeta)] \leq C_{\Psi} 2^{|X|_j} \leq 16 C_{\Psi} .
\end{align}
Hence, by definition of $\mathfrak{e}'_{j+1}$ in \eqref{eq:e_j+1_definition},
we obtain \eqref{eq:e_j+1_bound}.
\end{proof}

\begin{theorem}[Estimate for remainder coordinate] 
\label{thm:local_part_of_K_j+1_external_field}
Let $0\leq j \leq N-1$ and the parameters be as in Section~\ref{sec:parameters}. 
Further assume \ref{assump:u} to hold and let $C_{\Psi}$ be given by Proposition~\ref{prop:key_prop_G_j^Psi}.
Then the map $\cK^{\Psi}_{j+1}(U_j, \KK_j, \Psi_j)$ admits a decomposition
\begin{equation}\label{eq:decomp}
\cK^{\Psi}_{j+1}(U_j, \KK_j, \Psi_j)
= \cL^{\Psi}_{j+1} (\KK_j, \Psi_j) + \cM^{\Psi}_{j+1} (U_j, \KK_j, \Psi_j)
\end{equation}
such that the following estimates hold: the map $\cL^{\Psi}_{j+1}$ is linear in $(\KK_j, \Psi_j)$ and
there exist
$L'_0$, $A'_0(L)$,
$\tilde{\epsilon}_{nl} \equiv \tilde{\epsilon}_{nl} (\beta, A, L,  C_{\Psi} ) >0$ (only polynomially small in its arguments), 
$C_1 >0$ independent of $A$ and $L$ and 
$C_2=C_2 (\beta, A, L, C_{\Psi}) >0$ (only polynomially large in its arguments)
such that for 
$L \geq L'_0$, $A \geq A'_0 (L)$,
$\omega_j =(U_j, \KK_j, \Psi_j) \in \cY_j (\tilde{\epsilon}_{nl}, C_{\Psi})$,
\begin{equation}
\norm{\mathcal{L}_{j+1}^{\Psi} (\KK_{j}, \Psi_j )}_{ \Omega_{j+1}^\Psi} \leq C_1 C_{\Psi}  \big( L^2\alphaLoc  \norm{K_j (\cdot ; 0)}_{\Omega_j^K} + \alphaLoc^{\Psi} \norm{\omega_j}_{\bar{\Omega}_j}  \big) \label{eq:bound_for_L_j_K_j_external_field}
, \quad 
\end{equation}
with $\alphaLoc^{\Psi}$ from Lemma~\ref{lemma:G_j^Psi_is_Psi_regulator},
and $\cM^{\Psi}_{j+1} (U_j,\KK_j, \Psi_j)$ is continuously
Fr\'echet-differentiable with
\begin{align}
& \norm{{\cM}^{\Psi}_{j+1} (\omega_j) }_{ \Omega_{j+1}^\Psi} \leq C_2 (\beta,  A, L, C_{\Psi}) \norm{\omega_j}_{ \bar{\Omega}_j}^2 \label{eq:bound_for_N_j_K_j_external_field} \\
& \norm{D {\cM}^{\Psi}_{j+1}  (\omega_j) }_{ \Omega_{j+1}^\Psi} \leq C_2 (\beta,   A, L, C_{\Psi}) \norm{\omega_j}_{ \bar{\Omega}_j} \label{eq:bound_for_derivative_of_Nj_external_field} .
\end{align}
\end{theorem}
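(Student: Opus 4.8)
The strategy is to follow the proof of the bulk estimate \cite[Theorem~\ref{thm:local_part_of_K_j+1}]{dgauss1} line by line, tracking the extra terms produced by $\Psi_j$ and by the one-point energy $\mathfrak{e}_{j+1}$, and to estimate those extra terms using the machinery already assembled in Sections~3--4: the $\Psi$-regulator properties (Proposition~\ref{prop:key_prop_G_j^Psi}), the contraction estimates for the $\norm{\cdot}_{h,T_j^\Psi}$-norm (Proposition~\ref{prop:contraction_estimates_external_field}), the reblocking bound (Proposition~\ref{prop:largeset_contraction_external_field}), and the bound on $\Psi_j$ itself (Lemma~\ref{lemma:G_j^Psi_is_Psi_regulator}). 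The key structural observation is that $\cK_{j+1}^\Psi$ in \eqref{eq:expression_for_K_j+1_external_field} has exactly the same combinatorial skeleton as $\cK_{j+1}$, with three modifications: $K_j(\cdot;0)$ is replaced by $\bar K_j^\Psi$ (which carries the extra summand $\Psi_j(Y,\cdot)$, cf.~\eqref{eq:K_bar^Psi_definition}); the localisation operator $\mathcal{E}^\Psi K_j$ picks up the charge-$0$ contributions of $\Psi_j$ and of $K_j(\cdot;(\Psi_k))-K_j(\cdot;0)$ on blocks containing the origin (cf.~\eqref{eq:cEK^Psi_definition}--\eqref{eq:J_j^Psi_definition}); and the energy $\cE_{j+1}|B|$ is corrected by $1_{0\in B}\mathfrak{e}_{j+1}$. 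Since by $(\textnormal{A}_u)$ all these modifications are supported on the single block $B_0 \ni 0$, they affect only finitely many terms in the polymer sum, which is what allows a crude (but $j$-uniform) control.

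\textbf{The decomposition.} I would define $\cL^\Psi_{j+1}$ by linearising \eqref{eq:expression_for_K_j+1_external_field} in $(\KK_j,\Psi_j)$ exactly as $\cL_{j+1}$ is extracted from $\cK_{j+1}$ in \cite{dgauss1}: keep the terms where a single factor of $\bar K_j^\Psi - \mathcal{E}^\Psi K_j$ (in the $[X_1]$ power) is retained linearly and all $e^{U_j}-e^{\cdots}$ factors are dropped, while $\cM^\Psi_{j+1}$ is the quadratic-and-higher remainder. For the linear part, the contribution of $K_j(\cdot;0)$ through $\bar K_j^\Psi$ is handled verbatim as in the bulk, giving the term $L^2\alphaLoc\norm{K_j(\cdot;0)}_{\Omega_j^K}$ in \eqref{eq:bound_for_L_j_K_j_external_field}; the genuinely new piece is the linear dependence on $\Psi_j$ and on $K_j(\cdot;(\Psi_k))-K_j(\cdot;0)$. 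Here the crucial input is item (3) of $\cY_j$, i.e.\ the estimate \eqref{eq:Psi_j_condition} (equivalently Lemma~\ref{lemma:G_j^Psi_is_Psi_regulator}(2)): after integrating out $\zeta$ and subtracting the charge-$0$ part absorbed into $\mathfrak{e}_{j+1}$ via $\mathcal{E}^\Psi$, what remains is bounded by $\alphaLoc^\Psi\norm{\omega_j}_{\bar\Omega_j}$ times a regulator, with the small-set factor $A^{-|X|_j}$ available to run the reblocking sum using Proposition~\ref{prop:largeset_contraction_external_field} (recall the norm $\norm{\cdot}_{h,T_j^\Psi}$ uses $A/2$, and the $C_\Psi$ and the combinatorial factor $2^{|X|_j}$ from Proposition~\ref{prop:key_prop_G_j^Psi}(4) are precisely what the $A/2$ is designed to absorb). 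Summing the finitely many origin-blocks contributes only a bounded geometric factor. For $\cM^\Psi_{j+1}$, the quadratic bound \eqref{eq:bound_for_N_j_K_j_external_field} and its derivative \eqref{eq:bound_for_derivative_of_Nj_external_field} follow as in \cite[Theorem~\ref{thm:local_part_of_K_j+1}]{dgauss1}: each additional factor beyond the linear one supplies a further power of $\norm{\omega_j}_{\bar\Omega_j}$ (using $\norm{\Psi_j}_{\Omega_j^\Psi}\le C_\Psi\norm{\omega_j}_{\bar\Omega_j}$ and $\norm{e^{U_j}-e^{\cdots}}\le C\norm{U_j}_{\Omega_j^U}$ via submultiplicativity of the $T_j$-norm and Proposition~\ref{prop:key_prop_G_j^Psi}(3)), and smallness of $\norm{\omega_j}_{\bar\Omega_j}\le\tilde\epsilon_{nl}$ makes the resulting (convergent, $\beta,A,L$-dependent) series summable; Fréchet differentiability is inherited from the polynomial-in-$\omega_j$ structure of each term together with the continuity of the $\zeta$-integration on the relevant Banach spaces, exactly as in the bulk proof.

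\textbf{The main obstacle.} The delicate point is bookkeeping rather than a new idea: one must check that when $\cE_{j+1}|B|$ is replaced by $\cE_{j+1}|B| - 1_{0\in B}\mathfrak{e}_{j+1}$ and $\mathcal{E}K_j$ by $\mathcal{E}^\Psi K_j$, the cancellations that make the bulk $\cK_{j+1}$ contractive are not spoiled — i.e.\ that the charge-$0$, small-set part of $\bar K_j^\Psi$ localised onto origin-blocks is exactly what $\mathfrak{e}_{j+1}$ (via \eqref{eq:e_j+1_definition}) and $Q_j^\Psi$ (via the $1_{0\in D}$ term in \eqref{eq:cEK^Psi_definition}) remove, so that the leftover is genuinely governed by $\alphaLoc^\Psi$. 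This is the content of Theorem~\ref{thm:general_RG_step_consistent_external_field} at the algebraic level, but one still has to verify the quantitative estimate survives. A secondary nuisance is that $\bar K_j^\Psi$ mixes $K_j(\cdot;(\Psi_k)_{k<j})$ and $\Psi_j$, while the regulator $G_j^\Psi$ controls $\varphi\mapsto K(X,\varphi+tu_j)$ only through \eqref{eq:Psi-regulator_key_property1}; one must therefore be careful to use $G_j^\Psi$ (not $G_j$) throughout every step where a shifted argument appears, and to invoke \eqref{eq:analogue-Prop5.9} (valid for both $*=0,\Psi$) whenever a $\zeta$-expectation of a regulator is taken. Granting the analogues of \cite[\eqref{eq:F-bound-id}--\eqref{eq:g-o1}]{dgauss1} for the $\Psi$-norm, which Proposition~\ref{prop:contraction_estimates_external_field} provides, the rest is a transcription of the bulk argument with $A$ replaced by $A/2$ and an overall $C_\Psi$ absorbed into the constants. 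Full details are deferred to Appendix~\ref{app:reblocking}.
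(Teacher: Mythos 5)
Your proposal follows essentially the same route as the paper's proof. You correctly identify the decomposition of $\cL^\Psi_{j+1}$ by linearization in $(\KK_j,\Psi_j)$ keeping only $\#(X_0,X_1,Z)\le 1$; you correctly split the linear estimate into the bulk piece from $K_j(\cdot;0)$ (giving $L^2\alphaLoc\norm{K_j(\cdot;0)}_{\Omega_j^K}$, which the paper labels $\cL^{(1)}_{j+1}=\cL_{j+1}$) and the genuinely new pieces from $\Psi_j$ and $D_j := K_j(\cdot;(\Psi_k)_{k<j})-K_j(\cdot;0)$ (the paper's $\cL^{(2)}_{j+1}$ and $\cL^{(3)}_{j+1}$); and you correctly identify the two required inputs for the new pieces — item (3) of the $\cY_j$ condition for small sets containing the origin, and Proposition~\ref{prop:largeset_contraction_external_field} for large sets. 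For $\cM^\Psi_{j+1}$ you correctly observe that the bulk argument (via \cite[Lemma~\ref{lemma:bound_on_M^k}]{dgauss1}) applies once the analogues of the bulk estimates on the components of $\mathfrak{K}_j^\Psi$ are established, and that Proposition~\ref{prop:key_prop_G_j^Psi} supplies the regulator properties that the bulk proof relies on, with $C_\Psi$ and the $A\mapsto A/2$ replacement absorbed into the constants.

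Two minor remarks. First, your phrasing conflates the small-set and large-set mechanisms slightly: the $A^{-|X|_j}$ factor from the $\cY_j$ condition is the \emph{output} of the small-set bound, and the reblocking Proposition~\ref{prop:largeset_contraction_external_field} is applied separately to the large-set sum $\mathbb{S}\bigl(1_{Y\notin\cS_j}\Eplus[\cdot]\bigr)$; the paper handles these as two disjoint terms in each of $\cL^{(2)}_{j+1},\cL^{(3)}_{j+1}$. Second, the "full details" you defer to Appendix~\ref{app:reblocking} are in fact the algebraic consistency (Theorem~\ref{thm:general_RG_step_consistent_external_field}), not the quantitative estimates; the latter are carried out in the main text in Sections~\ref{subsec:the_expression_Lj_external_field}--\ref{subsec:Mj_bound_external_field}, via the auxiliary Lemmas~\ref{lemma:Ujbound_external_field} and~\ref{lemma:derivative_of_components_external_field} (the analogues of \cite[Lemmas~\ref{lemma:Ujbound}, \ref{lemma:Ujbound-summary}]{dgauss1}), which you gesture at but do not name. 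Neither affects the correctness of your approach.
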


\subsection{Proof of Theorem~\ref{thm:local_part_of_K_j+1_external_field}: bound of linear part}
\label{subsec:the_expression_Lj_external_field}
We first introduce $\mathcal{L}_{j+1}^{\Psi} $.
Proceeding as in \cite[Section~\ref{subsec:the_expression_Lj}]{dgauss1},
we may write the terms linear in $U_j$, $\KK_j$ from \eqref{eq:expression_for_K_j+1_external_field} by keeping only the terms in \eqref{eq:expression_for_K_j+1_external_field} with
\begin{align}
\# (X_0, X_1, Z) := |X_0|_{j+1} + |\operatorname{Comp}_{j+1} (X_1)| + |\operatorname{Comp}_{j+1} (Z)| \leq 1
\end{align}
and replacing exponentials by their linear approximations. This linearisation process is identical to that of \cite[Section~\ref{subsec:the_expression_Lj}]{dgauss1}. For $X\in \cP_{j+1}^c$, this gives
\begin{equation}
\begin{split} \label{eq:L_j_K_j_external_field}
&\mathcal{L}_{j+1}^{\Psi} ( X, \varphi') \\
&= \sum_{Y : \bar{Y} = X} 1_{Y\in \cP_j^c} \Big( \Eplus [ {  K_j (Y, \zeta + \varphi' ; (\Psi_k)_{k\leq j-1})  + \Psi_j  (Y, \zeta + \varphi') } ]- 1_{Y\in \cS_j} \sum_{D\in \cB_j (Y)} Q_j^{\Psi} (D, Y, \varphi') \Big) \\
&\quad + \sum_{D \in \mathcal{B}_{j}}^{\bar{D}=X} \Big( \Eplus [ U_j (D, \zeta + \varphi')] + \cE_{j+1} |D| - \mathfrak{e}_{j+1}' (D) - \cU_{j+1} (D, \varphi') + \sum_{Y\in \cS_j}^{D\in \cB_j (Y)}  Q_j^{\Psi} (D, Y, \varphi') \Big) \\
&=: \mathcal{L}^{(1)}_{j+1} (\KK_j) ( X, \varphi')  
+ \mathcal{L}^{(2)}_{j+1} ( \Psi_j) ( X, \varphi') 
+ \mathcal{L}^{(3)}_{j+1} (\KK_j) ( X, \varphi') 
\end{split} 
\end{equation}
where, using the choice of $\cU_{j+1}$ and $\mathfrak{e}'_{j+1}$, see \cite[\eqref{eq:evolution_of_U}]{dgauss1} and \eqref{eq:e_j+1_definition}, respectively, we set
\begin{align}
& \mathcal{L}^{(1)}_{j+1} (\KK_j) ( X, \varphi') =  \sum_{Y : \bar{Y} = X} 1_{Y\in \cP_j^c}  \Eplus K_j ( Y, \zeta + \varphi' ; 0) - 1_{Y\in \cS_j} \Eplus [ \operatorname{Loc}_{Y} \hat{K}_{j, q=0} (Y,  \varphi' + \zeta  ; 0 )  ],\\
& \mathcal{L}^{(2)}_{j+1} (\Psi_j) ( X, \varphi') =  \sum_{Y : \bar{Y}=X} 1_{Y\in \cP_j^c}  \Eplus[ {\Psi_j  (Y, \varphi' + \zeta)  - 1_{Y\in \cS_j} \hat{\Psi}_{j,0} (Y, \zeta) } ], \\
& \mathcal{L}^{(3)}_{j+1} (\KK_j ) ( X, \varphi') =  \sum_{Y : \bar{Y}=X} 1_{Y\in \cP_j^c}  \E [ D_j (Y,  \varphi' + \zeta) - 1_{Y\in \cS_j} \hat{D}_{j,0} (Y, \zeta)  ]  
\end{align}
and  
\begin{align}
D_{j}  (Y, \varphi) := K_j (Y, \varphi ; (\Psi_k)_{k<j}) - K_j (Y, \varphi ; 0) .
\end{align}
In fact, the $\cL_{j+1}$ in Theorem~\ref{thm:RG_without_external_field} (see \cite[Section~7.4]{dgauss1}) is identical to $\cL_{j+1}^{(1)}$, 
i.e.,
\begin{equation}
\cL_{j+1} (K_j (\cdot ; 0)) = \cL^{(1)}_{j+1} (\vec{K}_j) \label{eq:L^1_j+1_is_same_as_cL_j+1} 
\end{equation}
and also $\cL^{\Psi}_{j+1}$ is a function of $(\KK_j, \Psi_j)$, not depending on $U_j$.

\begin{proof}[Proof of \eqref{eq:bound_for_L_j_K_j_external_field} of Theorem~\ref{thm:local_part_of_K_j+1_external_field}]
We will show that the bound \eqref{eq:bound_for_L_j_K_j_external_field} holds for any choice of $\tilde{\varepsilon}_{nl} \leq \varepsilon_{nl}$, where the latter refers to the (bulk) value supplied by Theorem~\ref{thm:RG_without_external_field}, see above \eqref{eq:bound_for_L_j_K_j_wo_external_field}. Thus, let $\omega_j =(U_j, \KK_j, \Psi_j) \in \cY_j (\tilde{\epsilon}_{nl}, C_{\Psi})$.
By \eqref{eq:bound_for_L_j_K_j_wo_external_field} and \eqref{eq:L^1_j+1_is_same_as_cL_j+1}, we already know that
\begin{equation}
\norm{\cL_{j+1}^{(1)} (\KK_j)}_{ \Omega^K_{j+1}} \leq C_1 L^2 \alphaLoc  \norm{K_j (\cdot ; 0)}_{\vec{\Omega}_j^K}.
\end{equation}
The estimate on $\cL_{j+1}^{(2)}$ follows from the decomposition
\begin{equation}
\cL_{j+1}^{(2)} (X, \varphi') =  \sum_{Y \in \cS_j,  \, 0 \in Y^*}^{\bar{Y}=X}   \Eplus [ {\Psi}_{j} (Y, \varphi' + \zeta) - \hat{\Psi}_{j,0} (Y, \zeta) ]  + \mathbb{S} ( 1_{Y\not\in \cS_j} \Eplus [ \Psi_j (\cdot, \cdot+ \zeta) ] ) (X, \varphi') .
\end{equation}
The summation is running over 
$Y^* \ni 0$ now because of the assumption that $\Psi_j (Y, \varphi) = 0$ if 
$0\not\in Y^*$ (which is a part of the assumption $(\KK_j , \Psi_j) \in \cY_j (\tilde{\epsilon}_{nl}, C_{\Psi})$). Then the first term is bounded by
$C A^{-|X|_{j+1}} \alphaLoc^{\Psi} \norm{\omega_j}_{\bar{\Omega}_j}$
because of the assumption $( \KK_j, \Psi_j ) \in \cY_j (\epsilon_{\Psi}, C_{\Psi})$
and \eqref{eq:Psi_j_condition} implied by it (here we also used that $|\overline{Y}|_{j+1} \leq |Y|_{j}$).
The second term is bounded using Proposition~\ref{prop:largeset_contraction_external_field} with $* = \Psi$ with $L$ and $A = A(L)$ sufficiently large:
\begin{equation}
\norm{ \mathbb{S}\big[ 1_{Y\in \cP_j^c \backslash \cS_j} \Eplus[\Psi_j (\cdot, \cdot + \zeta)] \big]  }_{h, T^{\Psi}_{j+1} (X)} \leq (L^{-1} A^{-1})^{|X|_{j+1}} \norm{\Psi_j}_{\Omega_j^{\Psi}} \leq C \alphaLoc^{\Psi} A^{-|X|_{j+1}}  \norm{\Psi_j}_{\Omega_j^{\Psi}}
\end{equation}
Finally, we bound
\begin{align}\label{eq:L-3}
	\cL_{j+1}^{(3)} (X, \varphi') = \sum_{Y \in \cS_j,  \,0 \in Y^*}^{\bar{Y}=X}  \Eplus [ {D}_{j} (Y,  \varphi' + \zeta) - \hat{D}_{j,0} (Y, \zeta) ]   + \mathbb{S} ( 1_{Y\not\in  \cS_j} \Eplus [ D_j (\cdot, \cdot+ \zeta) ] ) (X, \varphi') .
\end{align}
Again, the assumption $D_j (Y, \zeta + \varphi') = 0$ for 
$Y^* \not\ni 0$ (which, as above, is a part of the assumption $(U_j, \KK_j, \Psi_j) \in \cY_j (\tilde{\epsilon}_{nl}, C_{\Psi})$) effectively restricts the sum in the first term to $Y^*  \ni 0$,
then Proposition~\ref{prop:contraction_estimates_external_field} with case $* = 0$ applies to give the bound $C A^{-|X|_{j+1}} \alphaLoc^{\Psi} \norm{\KK_j}_{\Omega_j^{\vec{K}}}$.
For the second term, Proposition~\ref{prop:largeset_contraction_external_field} with $*=0$ gives the bound same bound with the same choice of $L$ and $A$ as above.
\end{proof}

\subsection{Proof of Theorem~\ref{thm:local_part_of_K_j+1_external_field}: bound of non-linear part}
\label{subsec:Mj_bound_external_field}

Analogously as in \cite[Section~\ref{subsec:M_j+1_decomposition}]{dgauss1},
the non-linear part $\cM_{j+1}^{\Psi} := \cK^{\Psi}_{j+1}- \cL^{\Psi}_{j+1}$ (with $\cL^{\Psi}_{j+1}$ as defined by the first line of \eqref{eq:L_j_K_j_external_field}) can be decomposed into four parts,
\begin{align}
\mathcal{M}_{j+1}^{\Psi}  (U_j, \KK_j, X, \varphi') =  \sum_{k=1}^4 \MM_{j+1}^{\Psi, (k)} ( \mathfrak{K}_j^{\Psi} (\omega_j),  X, \varphi')
\end{align}
with
\begin{align}
\MM_{j+1}^{\Psi, (1)} ( \mathfrak{K}_j^{\Psi} (\omega_j),  X) 
	&= \sum_{X_0, X_1, Z, (B_{Z''})}^{\ast} 1_{\# (X_0, X_1, Z)  \geq 2}  e^{\cE_{j+1} |X| - \mathfrak{e}'_{j+1} (X) } e^{  \bar{U}^{\Psi}_{j+1} (X \backslash T)} \nonumber \nnb
	&\quad \times \Eplus \Big[ (e^{U_j} - e^{\bar{U}_{j+1}^{\Psi}  })^{X_0} (\bar{K}_j^{\Psi} - \mathcal{E}^{\Psi} K_j)^{[X_1]}  \Big] \prod_{Z'' \in \operatorname{Comp}_{j+1} (Z)} J^{\Psi}_j (B_{Z''}, Z'')   \label{eq:M^1_j+1_external_field}  \\
\MM_{j+1}^{\Psi, (2)} ( \mathfrak{K}_j^{\Psi} (\omega_j),  X) 
	&= \sum_{X_0, X_1, Z, (B_{Z''})}^{\ast} 1_{ \# (X_0, X_1, Z)  \leq 1 } (e^{\cE_{j+1} |X| - \mathfrak{e}'_{j+1} (X) } e^{\bar{U}^{\Psi}_{j+1} (X \backslash T)} - 1) \nonumber \nnb
	&\quad \times \Eplus \Big[ (e^{U_j} - e^{\bar{U}^{\Psi}_{j+1}})^{X_0} (\bar{K}^{\Psi}_j - \mathcal{E}^{\Psi} K_j)^{[X_1]}  \Big] \prod_{Z'' \in \operatorname{Comp}_{j+1} (Z)} J^{\Psi}_j (B_{Z''}, Z'')  \label{eq:M^2_j+1_external_field} 
	\\
	\MM_{j+1}^{\Psi, (3)} ( \mathfrak{K}_j^{\Psi} (\omega_j),  X) 
	&= \sum_{|X_0|_{j+1} = 1}^{X_0 = X} \Eplus \Big[ \Big( e^{U_j} - e^{ \bar{U}_{j+1}^{\Psi} } - U_j + \bar{U}_{j+1}^{\Psi} \Big)^{X_0} \Big]  \label{eq:M^3_j+1_external_field}  \\
\MM_{j+1}^{\Psi, (4)} ( \mathfrak{K}_j^{\Psi} (\omega_j),  X) 
	&= \Eplus \Big[ \sum_{Y\in \cP_j}^{\bar{Y} = X} e^{U_j (Y)} ( K_j (\cdot ; (\Psi_k)_{k<j}) + \Psi_j ) (X\backslash Y) - \mathbb{S} ( K_j +  \Psi_j ) (X) \Big] \label{eq:M^4_j+1_external_field} 
\end{align}
where $\mathfrak{K}_j^{\Psi} \equiv \mathfrak{K}_j^{\Psi} (\omega_j)$ is short for the collection
\begin{align}
(\cE_{j+1} |X| - \mathfrak{e}'_{j+1} (X) ,  \, U_j,  \, \bar{U}^{\Psi}_{j+1},  \, K_j (\cdot ; (\Psi_k)_{k<j}) + \Psi_j , \, \bar{K}^{\Psi}_j, \, \cE^{\Psi} K_j, \, J^{\Psi}_j )(\omega_j),
\end{align}
we consider $X \mapsto \cE_{j+1} |X| - \mathfrak{e}_{j+1} 1_{0 \in X}$ as a polymer activity,
\begin{equation} \label{eq:newbar-U_j}
\bar{U}_{j+1}^{\Psi} (X, \varphi') := - \cE_{j+1} |X| + \mathfrak{e}'_{j+1} (X) +  U_{j+1} ( X, \varphi'),
\end{equation}
and the rest of the notations are those of Definition~\ref{def:evolution_of_e_j_K_j}.
Also notice that $U_{j+1}$ is used in place of $\cU_{j+1}$ to simplify notations.
These look somewhat complicated, but in view of \cite[Lemma~\ref{lemma:bound_on_M^k}]{dgauss1}, it is actually sufficient to check some regularity properties of terms appearing in each $\MM_{j+1}^{\Psi, (k)}$ to show the differentiability of $\cM_{j+1}$ along with the desired estimates \eqref{eq:bound_for_N_j_K_j_external_field} and \eqref{eq:bound_for_derivative_of_Nj_external_field}. We now proceed to supply the necessary details. Our discussion follows closely the line of arguments yielding \cite[Lemmas~\ref{lemma:Ujbound-summary} and~\ref{lemma:bound_on_M^k}]{dgauss1}. We first gather the estimates that will lead to a suitable analogue of \cite[Lemma~\ref{lemma:Ujbound-summary}]{dgauss1}. This is the object of the next lemma.

\begin{lemma} \label{lemma:Ujbound_external_field} 
Under the assumptions of Theorem~\ref{thm:local_part_of_K_j+1_external_field}, for any $\delta >0$, there exists $\epsilon =\epsilon (\delta , \beta, L, C_\Psi) > 0$ such that for $\omega_j \in \cY_j (\epsilon , C_{\Psi})$,
$B\in \mathcal{B}_{j+1}$, $k \in \{ 0,1,2 \}$,
\begin{align}
& \norm{\mathfrak{U} (B, \varphi)}_{h, T_j (\varphi, B)} \leq C(\delta,  \beta, L, C_{\Psi}) ( 1+  \delta c_w \kappa_L w_j (B, \varphi)^2 ) \norm{\omega_j}_{\bar{\Omega}_j}  \label{eq:Ujbound_external_field1}, \\
& \norm{e^{\mathfrak{U} (B, \varphi )} - \sum_{m=0}^{k} \frac{1}{m!} (\mathfrak{U}(B, \varphi ))^m }_{h, T_{j} (\varphi, B)} \leq C(\delta, \beta, L, C_{\Psi})  e^{\delta c_w \kappa_L w_j (B, \varphi)^2} \norm{\omega_j}_{\bar{\Omega}_j} ^{k+1},  \label{eq:Ujbound_external_field}
\end{align}
where $\mathfrak{U}$ is either $U_j$ or $\bar{U}_{j+1}^{\Psi}$. The same inequalities hold with $\mathfrak{U} (B)$ and $C(\delta, \beta, L, C_{\Psi})$ replaced by 
$\cE_{j+1} |B| - \mathfrak{e}'_{j+1} (B)$ and $C(\beta, L, C_{\Psi})$, respectively, and $\delta$ set to $0$.
\end{lemma}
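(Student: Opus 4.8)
The plan is to deduce Lemma~\ref{lemma:Ujbound_external_field} from the bulk estimate \cite[Lemma~\ref{lemma:Ujbound-summary}]{dgauss1} together with the two a priori bounds already available, namely Theorem~\ref{thm:RG_without_external_field} for the bulk coordinates $\cE_{j+1}$, $U_{j+1}$ and Theorem~\ref{thm:e_j_estimate} for the one-point energy $\mathfrak{e}_{j+1}$. First I would note that, since $\omega_j \in \cY_j(\epsilon,C_\Psi)$, Definition~\ref{def:extended_Omega_j}(2) forces $\|U_j\|_{\Omega_j^U}$ and $\|\KK_j\|_{\Omega_j^{\KK}}$ to be at most $\|\omega_j\|_{\bar\Omega_j}\le\epsilon$, so in particular the bulk norm $\max\{\|U_j\|_{\Omega_j^U},\|K_j(\cdot;0)\|_{\Omega_j^K}\}$ is at most $\epsilon$; once $\epsilon$ is below the bulk threshold $\epsilon_{nl}$ of Theorem~\ref{thm:RG_without_external_field}, the estimates of Theorems~\ref{thm:RG_without_external_field} and~\ref{thm:e_j_estimate} are in force and give that $\|U_{j+1}\|_{\Omega_{j+1}^U}$, $|\cE_{j+1}|\,|B|$ and $|\mathfrak{e}_{j+1}|$ are each at most $C(\beta,L,C_\Psi)\|\omega_j\|_{\bar\Omega_j}$ for $B\in\cB_{j+1}$ (using $|\nabla^{(e_1,-e_1)}\Gamma_{j+1}(0)|\le CL^{-2j}$ from the gradient bounds on $\Gamma_{j+1}$ in \cite[Corollary~\ref{cor:Gammaj}]{dgauss1}, that $|B|=L^{2(j+1)}$, and $A\ge A_0(L)\ge 1$).

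For the linear bound \eqref{eq:Ujbound_external_field1} with $\mathfrak U=U_j$, I would use additivity over $j$-blocks to write $U_j(B,\varphi)=\sum_{B'\in\cB_j(B)}U_j(B',\varphi)$, apply \cite[Lemma~\ref{lem:W_norm}]{dgauss1} blockwise (using $\|\cdot\|_{h,T_j(B,\varphi)}\le\|\cdot\|_{h,T_j(B',\varphi)}$ for $B'\subset B$), and sum, using $\sum_{B'\in\cB_j(B)}w_j(B',\varphi)^2=w_j(B,\varphi)^2$, to get $\|U_j(B,\varphi)\|_{h,T_j(B,\varphi)}\le C\|U_j\|_{\Omega_j^U}\,w_j(B,\varphi)^2$; the form in the lemma then follows from the elementary inequality $a\le(\delta c_w\kappa_L)^{-1}(1+\delta c_w\kappa_L a)$ for $a\ge 0$ and $\|U_j\|_{\Omega_j^U}\le\|\omega_j\|_{\bar\Omega_j}$. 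For $\mathfrak U=\bar U_{j+1}^\Psi$, I would split off the field-independent scalar via \eqref{eq:newbar-U_j}, writing $\bar U_{j+1}^\Psi(B,\varphi)=\bigl(-\cE_{j+1}|B|+U_{j+1}(B,\varphi)\bigr)+1_{0\in B}\,\mathfrak{e}_{j+1}$: the bracket is precisely the bulk quantity $\bar U_{j+1}(B,\varphi)$ treated in \cite[Lemma~\ref{lemma:Ujbound-summary}]{dgauss1}, while $1_{0\in B}\,\mathfrak{e}_{j+1}$ is a $\varphi$-independent scalar whose $T_j(B,\varphi)$-norm is $1_{0\in B}|\mathfrak{e}_{j+1}|\le CC_\Psi A^{-1}\|\omega_j\|_{\bar\Omega_j}$ by Theorem~\ref{thm:e_j_estimate}, so it only enters the constant ($1\cdot$) part of the bound, not the $w_j$-part. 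The final sentence of the lemma is the same observation with no $w_j$-dependence at all: $\cE_{j+1}|B|-\mathfrak{e}_{j+1}1_{0\in B}$ is a pure scalar, so its $T_j(B,\varphi)$-norm equals its absolute value, which is $\le C(\beta,L,C_\Psi)\|\omega_j\|_{\bar\Omega_j}$ by the first paragraph, and no $w_j^2$ term is needed, hence $\delta=0$.

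For the Taylor remainder \eqref{eq:Ujbound_external_field} I would write $e^{\mathfrak U(B,\varphi)}-\sum_{m=0}^k\frac{1}{m!}\mathfrak U(B,\varphi)^m=\sum_{m\ge k+1}\frac{1}{m!}\mathfrak U(B,\varphi)^m$ and bound its $T_j(B,\varphi)$-norm, by submultiplicativity of $\|\cdot\|_{h,T_j(B,\varphi)}$, by $N^{k+1}e^N$ with $N=\|\mathfrak U(B,\varphi)\|_{h,T_j(B,\varphi)}$. Inserting \eqref{eq:Ujbound_external_field1} applied with $\delta/4$ in place of $\delta$, so $N\le C_0\bigl(1+\frac{\delta}{4}c_w\kappa_L w_j(B,\varphi)^2\bigr)\|\omega_j\|_{\bar\Omega_j}$ with $C_0=C(\delta/4,\beta,L,C_\Psi)$, and using $(1+\theta)^{k+1}\le e^{3\theta}$ (as $k\le 2$) together with the smallness of $\|\omega_j\|_{\bar\Omega_j}\le\epsilon$ to absorb the residual $C_0\epsilon\,\theta$ coming from $e^N$, one arrives at the claimed bound $C(\delta,\beta,L,C_\Psi)\,e^{\delta c_w\kappa_L w_j(B,\varphi)^2}\|\omega_j\|_{\bar\Omega_j}^{k+1}$; the condition $C_0\epsilon\le 1$ is what fixes the dependence $\epsilon=\epsilon(\delta,\beta,L,C_\Psi)$, and concretely one may take $\epsilon=\min\{\epsilon_{nl},1/C_0\}$. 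For the scalar $\cE_{j+1}|B|-\mathfrak{e}_{j+1}1_{0\in B}$ the remainder bound is immediate since $|x|^{k+1}e^{|x|}\le C|x|^{k+1}$ for $|x|$ small, again with $\delta=0$.

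The only step requiring genuine care, as opposed to a routine transcription of the proof of \cite[Lemma~\ref{lemma:Ujbound-summary}]{dgauss1}, is verifying that the new term $1_{0\in X}\mathfrak{e}_{j+1}$ enters harmlessly: that it is genuinely $\varphi$-independent, so it contributes no $w_j$-growth and does not interfere with the $\delta$-dependence, and that it is controlled in the full norm $\|\omega_j\|_{\bar\Omega_j}$ rather than a stronger one, which is exactly what Theorem~\ref{thm:e_j_estimate} supplies. One also has to check that the constants and the smallness threshold $\epsilon$ can be taken uniformly in $j$; this holds because $C_\Psi=C_\Psi(M_u)$ is $j$-independent and the bulk estimates of Theorems~\ref{thm:RG_without_external_field} and~\ref{thm:e_j_estimate} are uniform in $j$.
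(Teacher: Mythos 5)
Your proof is correct and follows essentially the same route as the paper's: reduce to the bulk estimate of \cite[Lemma~\ref{lemma:Ujbound}]{dgauss1}, split off the $\varphi$-independent scalar $1_{0\in B}\,\mathfrak{e}_{j+1}$ via \eqref{eq:newbar-U_j}, bound it using Theorem~\ref{thm:e_j_estimate}, and handle the Taylor remainder by submultiplicativity of $\|\cdot\|_{h,T_j(B,\varphi)}$ together with the smallness assumption on $\epsilon$. Your account is in fact somewhat more explicit than the paper's (e.g.\ the blockwise derivation of the $U_j$ bound from \cite[Lemma~\ref{lem:W_norm}]{dgauss1} and the detailed bookkeeping of the $\delta$-dependence), but these are presentational, not conceptual, differences.
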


\begin{proof}
  For $\mathfrak{U}=U_j$ or $\cE_{j+1} |B|$, the asserted bounds are then an immediate consequence of \cite[Lemma~\ref{lemma:Ujbound}]{dgauss1}.   
  For the remaining choices of  $\mathfrak{U}$, recall 
the definition and the bound on $\bar{U}_{j+1}$ provided by \cite[\eqref{eq:Ubar_def}]{dgauss1} and
\cite[\eqref{eq:Ujbound1}]{dgauss1} that
 for $B\in \cB_{j+1}$,
  \begin{align}
\norm{\bar{U}_{j+1} (B, \varphi )}_{h, T_{j} (B, \varphi)}
\leq  C(\delta,  \beta,L) 
\big( 1+  \delta c_w \kappa_L w_j (B, \varphi)^2 \big) \norm{\omega_j}_{\bar{\Omega}_j}
	.
  \end{align}
Also by Theorem~\ref{thm:e_j_estimate}, we have
\begin{equation}
|\mathfrak{e}'_{j+1} (B, \omega_j)| \leq CC_{\Psi} A^{-1} \norm{\omega_j}_{\bar{\Omega}_j},
\end{equation}
and since 
$\bar{U}_j^{\Psi} (X, \varphi) =  \mathfrak{e}'_{j+1} (X) + \bar{U}_{j+1} (X, \varphi')$
 by \eqref{eq:newbar-U_j},  we have
 \begin{align}
\norm{\bar{U}_{j+1}^{\Psi} (B, \varphi )}_{h, T_{j} (B, \varphi)}
\leq  C(\delta,  \beta,L, C_{\Psi}) 
\big( 1+  \delta c_w \kappa_L w_j (B, \varphi)^2 \big) \norm{\omega_j}_{\Omega_j} , \label{eq:Ujbound_external_field3}
  \end{align}
showing \eqref{eq:Ujbound_external_field1}. 
For the second inequality,  assume
$\epsilon \leq 1/C(\delta,\beta, L, C_{\Psi})$ and
$\norm{\omega_j}_{\bar{\Omega}_j} \leq \epsilon$, then the submultiplicativity of norm and \eqref{eq:Ujbound_external_field3} shows
\begin{equation} \label{eq:Ujbound_external_field4}
  \|e^{\bar{U}_{j+1}^{\Psi} }\|_{h,T_j(B, \varphi)} \leq e^{\norm{\bar{U}_{j+1}^{\Psi}}_{h,T_j(B, \varphi)}}  \leq C(\delta,  \beta,L, C_{\Psi})   e^{\delta c_w\kappa_L w_j(B,\varphi)^2} .
\end{equation}
 Then \eqref{eq:Ujbound_external_field3} and \eqref{eq:Ujbound_external_field4} shows \eqref{eq:Ujbound_external_field}.

\end{proof}

We now state the analogue of \cite[Lemma~\ref{lemma:Ujbound-summary}]{dgauss1} in the present context.

\begin{lemma} \label{lemma:derivative_of_components_external_field}
Under assumptions of Theorem~\ref{thm:local_part_of_K_j+1_external_field}, there exist 
\sout{$c_w >0$} $\epsilon \equiv \epsilon (  \beta, L) >0$, $\eta>0$, $C \equiv C(c_w,   \beta, L, {C_{\Psi}})$ and $C_{A} \equiv C_A(c_w, L, A, {C_{\Psi}})$ such that 
\begin{align}
& \norm{D e^{\mathfrak{U} (B, \varphi)} }_{h, T_j (B, \varphi)} \leq C e^{ c_w \kappa_L w_j (B, \varphi)^2} \label{eq:derivatives1_external_field} \\
& \norm{D^2 e^{\mathfrak{U} (B, \varphi)} }_{h, T_j (B, \varphi)} \leq C e^{ c_w \kappa_L w_j (B, \varphi)^2} \label{eq:derivatives2_external_field} \\
& \norm{D J_j^{\Psi} (B, Z, \varphi')}_{h, T_j (B, \varphi')} \leq CA^{-1} e^{ c_w \kappa_L w_{j} (B, \varphi')^2} \label{eq:derivatives3_external_field} \\
& \norm{D  \bar{K}^{\Psi}_j (Z, \varphi)  }_{h, T_j (Z, \varphi)} \leq C_A A^{- (1+ \eta)|Z|_{j+1}} G_j^{\Psi} (Z,\varphi)  \label{eq:derivatives4_external_field} \\
& \norm{D \mathcal{E}^{\Psi} K_j (Z, \varphi')  }_{h, T_j (Z, \varphi')} \leq C_A A^{- (1+ \eta)|Z|_{j+1}} e^{ c_w \kappa_L w_j (Z, \varphi')^2}  \label{eq:derivatives5_external_field}
\end{align}
for $B \in \mathcal{B}_{j+1}$, $Z\in \mathcal{P}_{j+1}$ whenever 
$\omega_j \in \cY_j (\epsilon ( L) , C_{\Psi})$ and $\mathfrak{U}$ is either $U_j$ or $\bar{U}_{j+1}^{\Psi}$ or $\cE_{j+1} |B| - \mathfrak{e}'_{j+1} (B)$.
In the final case, $e^{c_w \kappa_L w_j (B, \varphi)^2}$ can be omitted. 
\end{lemma}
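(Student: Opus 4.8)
The plan is to follow, essentially line by line, the proof of the bulk estimate \cite[Lemma~\ref{lemma:Ujbound-summary}]{dgauss1}, inserting only the changes forced by the external field, namely the perturbed quantities $\Psi_j$, $\bar{K}_j^\Psi$, $\cE^\Psi K_j$, $J_j^\Psi$ from Definition~\ref{def:evolution_of_e_j_K_j} and the scalar $\mathfrak{e}_{j+1}$. Throughout I would use that $G_j^\Psi \geq G_j$ (Proposition~\ref{prop:key_prop_G_j^Psi}(1)), so that it suffices to produce the weaker regulator $G_j^\Psi$ on the right-hand side, together with the factorisation and absorption properties Proposition~\ref{prop:key_prop_G_j^Psi}(2)--(3), the contraction estimate \eqref{eq:analogue-Prop5.9}, the submultiplicativity of $\norm{\cdot}_{h, T_j(B,\varphi)}$, and the bounds already supplied by Lemma~\ref{lemma:Ujbound_external_field}.

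For \eqref{eq:derivatives1_external_field} and \eqref{eq:derivatives2_external_field} I would observe, by the chain rule, that $D e^{\mathfrak{U}(B,\cdot)}$ and $D^2 e^{\mathfrak{U}(B,\cdot)}$ are products of $e^{\mathfrak{U}(B,\cdot)}$ with $D\mathfrak{U}(B,\cdot)$ and $D^2\mathfrak{U}(B,\cdot)$, bound $D\mathfrak{U}$ and $D^2\mathfrak{U}$ via \eqref{eq:Ujbound_external_field1}, bound $e^{\mathfrak{U}}$ by submultiplicativity together with \eqref{eq:Ujbound_external_field1}, and then absorb the resulting polynomial prefactor $(1 + \delta c_w \kappa_L w_j(B,\varphi)^2)^{2}$ into $e^{\delta c_w\kappa_L w_j(B,\varphi)^2}$, choosing $\delta$ so that the total coefficient in the exponent does not exceed $c_w$; when $\mathfrak{U} = \cE_{j+1}|B| - 1_{0\in B}\mathfrak{e}_{j+1}$ is a scalar, $D\mathfrak{U}=0$, so no $w_j$-weight is generated, as asserted. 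The only novelty relative to the bulk is the additive scalar $\mathfrak{e}_{j+1}$: by Theorem~\ref{thm:e_j_estimate} it is $O(C_\Psi A^{-1}\norm{\omega_j}_{\bar{\Omega}_j})$, so it contributes nothing to the derivatives and only a bounded multiplicative factor $e^{|\mathfrak{e}_{j+1}|}$ to $\norm{e^{\bar{U}_{j+1}^{\Psi}}}$.

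Estimate \eqref{eq:derivatives3_external_field} is in fact identical to its bulk counterpart: in $J_j^\Psi$, see \eqref{eq:J_j^Psi_definition}, the only $\varphi'$-dependence of $Q_j^\Psi$, see \eqref{eq:cEK^Psi_definition}, is the term $\Loc_{Y,D}\Eplus[K_j(Y,\varphi'+\zeta;0)]$, the remaining $1_{0\in D}$-term depending on $\zeta$ alone; hence $D J_j^\Psi$ equals the corresponding bulk quantity (with $K_j(\cdot;0)$ in place of $K_j$), and the argument of \cite[Lemma~\ref{lemma:Ujbound-summary}]{dgauss1} applies verbatim, the factor $A^{-1}$ coming from the localisation operator and the expectation being processed by \eqref{eq:analogue-Prop5.9}. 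For \eqref{eq:derivatives4_external_field} I would differentiate \eqref{eq:K_bar^Psi_definition} by the Leibniz rule over the blocks of $X\setminus Y$ and over $Y$: bound $D e^{U_j(B,\cdot)}$ and $e^{U_j(B,\cdot)}$ for $B\in\cB_j(X\setminus Y)$ via \eqref{eq:derivatives1_external_field} and submultiplicativity, bound $D K_j(Y,\cdot;(\Psi_k)_{k<j})$ and $D\Psi_j(Y,\cdot)$ by $\norm{\KK_j}_{\Omega_j^{\KK}}$, resp.\ $\norm{\Psi_j}_{\Omega_j^\Psi}$, times $G_j^\Psi(Y,\varphi)$ via \eqref{eq:Psi-regulator_key_property1}, and then absorb the $w_j$-weights on the blocks of $X\setminus Y$ into $G_j^\Psi(Y,\varphi)$ using Proposition~\ref{prop:key_prop_G_j^Psi}(3), which turns the product into $C_\Psi^{|X|_j} G_j^\Psi(\bar{Y},\varphi)$; the factor $A^{-(1+\eta)|Z|_{j+1}}$ then arises, exactly as in the bulk, from the norm normalisations $(A/2)^{|Y|_j}$, $A^{|Y|_j}$ together with the reblocking bound $|\bar{Y}|_{j+1}\leq |Y|_j$, after enlarging $A$. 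Finally, \eqref{eq:derivatives5_external_field} follows from \eqref{eq:derivatives3_external_field} by summing over the finitely many $B\in\cB_{j+1}(X)$ and using that $J_j^\Psi(B,X,\cdot)$, and hence $\cE^\Psi K_j(X,\cdot)$, vanishes unless $X\in\cS_{j+1}$, so that $|X|_{j+1}$ is bounded, $w_j(B,\varphi')^2 \leq w_j(X,\varphi')^2$, and the surplus exponent $\eta$ is harmlessly absorbed into $C_A$.

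I expect the only genuinely delicate point to be the regulator bookkeeping in \eqref{eq:derivatives4_external_field} (and, through \eqref{eq:J_j^Psi_definition}, in \eqref{eq:derivatives5_external_field}): one must ensure that the $w_j$-weights generated by the $e^{U_j}$-factors on $X\setminus Y$ combine with the $G_j^\Psi$-weight carried by $K_j(\cdot;(\Psi_k)_{k<j})+\Psi_j$ on $Y$ into a single $G_j^\Psi$ on the reblocked polymer $\bar{Y}$, which is precisely the content of Proposition~\ref{prop:key_prop_G_j^Psi}(3), valid for the same smallness of $c_w$ already imposed there. Everything else is a transcription of the corresponding steps in \cite[Lemma~\ref{lemma:Ujbound-summary}]{dgauss1}.
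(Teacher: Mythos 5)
Your proposal has a conceptual misreading that affects the two points where the external field actually enters. The derivative $D$ in this lemma is the Fr\'echet derivative with respect to the renormalisation-group coordinate $\omega_j = (U_j, \KK_j, \Psi_j)$, not the derivative with respect to the field variable $\varphi$ or $\varphi'$. This is what the lemma is for: it feeds into the verification that $\cM^{\Psi}_{j+1}$ is continuously $\omega_j$-differentiable, via the analogue of \cite[Lemma~\ref{lemma:bound_on_M^k}]{dgauss1} (and matches the bulk lemmas \cite[Lemma~\ref{lemma:derivative_of_components_v2_1}--\ref{lemma:derivative_of_components_v2}]{dgauss1} that the paper cites). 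With this reading, the operator-norm style of the bounds (no factor $\norm{\omega_j}$ on the right, unlike \eqref{eq:Ujbound_external_field1}), and the remark that the regulator can be dropped when $\mathfrak{U}=\cE_{j+1}|B|-1_{0\in B}\mathfrak{e}_{j+1}$ (because a scalar has trivial $h,T_j$-norm even though its $\omega_j$-derivative is nonzero), both make sense; under a $\varphi$-derivative reading that remark would be vacuous since $De^{\mathfrak{U}}$ would simply be $0$.

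Because of this, your argument for \eqref{eq:derivatives3_external_field} does not go through as stated. You argue that the $1_{0\in D}$-term of $Q_j^\Psi$ in \eqref{eq:cEK^Psi_definition} depends only on $\zeta$ and hence ``$DJ_j^\Psi$ equals the corresponding bulk quantity.'' But that term does depend on $\omega_j$, linearly through $\Psi_j$, $K_j(\cdot;(\Psi_k)_{k<j})$ and $K_j(\cdot;0)$, so its $\omega_j$-derivative is precisely that term evaluated in the direction $\dot\omega_j$; it is not zero and must be estimated. This is exactly the extra work in the paper's proof: one writes $Q_j^\Psi = Q_j + 1_{Y\in\cS_j}1_{0\in D}\Eplus[\hat\Psi_{j,0}(Y,\zeta)+\hat D_{j,0}(Y,\zeta)]$, reuses \cite[\eqref{eq:Q_j_bound}]{dgauss1} for $Q_j$, and then proves that $\norm{\Eplus[\hat\Psi_{j,0}(Y,\zeta)]}_{h,T_j(Y,\varphi')}$ and $\norm{\Eplus[\hat D_{j,0}(Y,\zeta)]}_{h,T_j(Y,\varphi')}$ are $O(A^{-|Y|_j})$ using the $\Psi$-regulator bound \eqref{eq:analogue-Prop5.9} and $G_j\leq G_j^\Psi$, yielding \eqref{e:DQPsi-bd}. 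That these extra contributions are constant in $\varphi'$ is useful (they do not generate a $w_j$-weight), but it does not make them disappear from the derivative. Your write-up of \eqref{eq:derivatives4_external_field} and \eqref{eq:derivatives5_external_field}, while read under the same misinterpretation, happens to be closer in spirit to the paper (linearity of $\bar{\cF}$, summing over blocks), and the bulk-transcription plan for \eqref{eq:derivatives1_external_field}--\eqref{eq:derivatives2_external_field} is fine once $D$ is interpreted correctly; only the justification "$D\mathfrak{U}=0$" in the scalar case needs to be replaced by the observation that $De^{\mathfrak{U}}(\dot\omega_j)$ is then a scalar in $\varphi$, so no $w_j$-weight appears.
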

\begin{proof} 
The proof is mostly the same as that of \cite[Lemma~\ref{lemma:derivative_of_components_v2_1}--\ref{lemma:derivative_of_components_v2}]{dgauss1}. The bounds \eqref{eq:derivatives1_external_field} and \eqref{eq:derivatives2_external_field} are consequences Lemma~\ref{lemma:Ujbound_external_field},
cf.~the discussion around~\cite[\eqref{eq:easy-bd1}--\eqref{eq:easy-bd2}]{dgauss1}.
The bound \eqref{eq:derivatives5_external_field} follows directly from \eqref{eq:derivatives3_external_field} (cf.~\cite[Lemma~\ref{lemma:derivative_of_components_v2_1}]{dgauss1}), which in turn follows from{ a bound on} $\norm{D Q_j^{\Psi}}_{h, T_j (Y, \varphi')}$ (namely, \eqref{e:DQPsi-bd} below).
To obtain {this bound}, notice that for $D\in \cB_j$, $Y\in \cS_j$,
\begin{align}
Q_j^{\Psi} (D, Y, \varphi') = Q_j (D, Y, \varphi' ) + 1_{Y\in \cS_j} \frac{ 1_{D \subset Y \cap B_0^*} }{|Y \cap B_0^*|_j } \E^{\zeta}[\hat{\Psi}_{j, 0} (Y, \zeta) + \hat{D}_{j,0} (Y, \zeta) ]
\end{align}
where $D_j (Y, \zeta) = K_{j} (Y, \zeta ; (\Psi_k)_{k<j}) - K_j (Y, \zeta ; 0)$ and $Q_j$ is defined by \cite[\eqref{eq:Q_j_definition}]{dgauss1}.
But \cite[\eqref{eq:Q_j_bound}]{dgauss1} already bounds $Q_j (D, Y, \varphi)$, 
so we actually only have to bound $\Eplus[\hat{\Psi}_{j, 0} (Y, \zeta) + \hat{D}_{j,0}(Y, \zeta) ]$. But
\begin{align}
& \norm{ \Eplus [ \hat{\Psi}_{j,0} (Y, \zeta) ] }_{h, T_j (Y, \varphi')} \leq C (A/2)^{-|Y|_j} \norm{\Psi_j }_{\Omega_j^{\Psi}} \Eplus [G_j^{\Psi}(Y, \zeta)] \leq C_{\Psi} C (A/4)^{-|Y|_j} \norm{\Psi_j }_{\Omega_j^{\Psi}} , \\
& \norm{ \Eplus [ \hat{D}_{j,0} (Y, \zeta)] }_{h, T_j (Y, \varphi')} \leq C A^{-|Y|_j} \norm{\KK_j}_{\Omega_j^{\vec{K}}} \Eplus [G_j (Y, \zeta) ] \leq  C (A/2)^{-|Y|_j} \norm{\KK_j}_{\Omega_j^{\vec{K}}}
\end{align}
so it follows that $Q_j^{\Psi}$ is differentiable with
\begin{equation} \label{e:DQPsi-bd}
\norm{DQ_j^{\Psi} (D, Y, \varphi')}_{h, T_{j} (Y, \varphi')} \leq C A^{-|Y|_j} e^{c_w \kappa_L w_j (D, \varphi')} .
\end{equation}
For \eqref{eq:derivatives4_external_field}, notice that if we write $\bar{\cF}$ for the 
function 
\begin{align}
	\bar{\cF}(U_j,K_j) = \bar{K}_j := \sum_{Y\in \cP_j}^{\bar{Y} = X} e^{U_j (X \backslash Y)} K_j (Y)
	,
\end{align}
it follows that $\bar{K}_j^{\Psi} = \bar{\cF} (U_j,K_j (\cdot ; (\Psi_k)_{k<j}) + \Psi_j )$. So by inspecting the proof of \cite[Lemma~\ref{lemma:derivative_of_components_v2}]{dgauss1}, one sees that $D\bar{K}_j^{\Psi}$ satisfies exactly the same bound as $D \bar{K}_j$ (see \cite[\eqref{eq:K_bar_definition}, \eqref{eq:derivatives4-v2}]{dgauss1} for its definition and bound), only with $A$ replaced by $A/2$, i.e.,
\begin{equation}
G_j^{\Psi} (Z, \varphi)^{-1}  \norm{D  \bar{K}^{\Psi}_j (Z, \varphi)  }_{h, T_j (Z, \varphi)} \leq C_A  (A/2)^{- (1+ \eta)|Z|_{j+1}}. 
\end{equation}
But for $A$ large enough, this is less than $C_A A^{- (1+ \eta')|Z|_{j+1}}$ for some $\eta' \in (0, \eta)$
as needed.
\end{proof}

\begin{proof}[Proof of continuous differentiability of $\cM^{\Psi}_{j+1}$ and \eqref{eq:bound_for_N_j_K_j_external_field}, \eqref{eq:bound_for_derivative_of_Nj_external_field}]
For $j\leq N-2$, \cite[Lemma~\ref{lemma:bound_on_M^k}]{dgauss1} implies that the bounds on $\mathfrak{K}^{\Psi}_j (\omega_j) = (\cE_{j+1}, U_j, \bar{U}_{j+1}^{\Psi},  K_j (\cdot ; (\Psi_k)_{k<j}) + \Psi_j ,  \bar{K}_j^{\Psi}, \cE^{\Psi} K_j, J_j^{\Psi})$ provided by
Lemma~\ref{lemma:Ujbound_external_field} and Lemma~\ref{lemma:derivative_of_components_external_field} are sufficient to prove the differentiability and bounds on $\MM_{j+1}^{\Psi, (k)} (\mathfrak{K}_j (\omega_j))$, $k\in \{1,2,3,4\}$. 
In fact, \eqref{eq:derivatives4_external_field} now imposes bound in terms of $G_j^{\Psi}$ instead of $G_j$ but this does not affect the proof because \cite[Lemma~\ref{lemma:bound_on_M^k}]{dgauss1} uses the properties of $G_j$ that (1) $e^{c_w \kappa_L w_j (X)^2} G_j (Y) \leq G_j (X\cup Y)$ if $X\cap Y = \emptyset$,
(2) $G_j (X) = \prod_{\operatorname{Comp}_j (X)} G_j (X)$ and 
(3) $\Eplus [G_j (X, \varphi' + \zeta)] \leq 2^{|X|_j} G_{j+1} (X, \varphi')$. But the same properties are verified on account of Proposition~\ref{prop:key_prop_G_j^Psi}, while the constant $C_{\Psi}$ only contributes as a multiplicative factor in each estimate.

For $j=N-1$, all of the arguments of Sections-\ref{subsec:the_expression_Lj_external_field}--\ref{subsec:Mj_bound_external_field}
continue to apply as $\Gamma_{N}^{\Lambda_N}$ satisfies exactly the same bounds as required for $\Gamma_j$ when $j=N$.
\end{proof}

\section{Proof of Theorem~\ref{thm:Z_N_ratio_conclusion}}
\label{sec:proof_Z_N_ratio}

In Section~\ref{sec:rg_generic_step_external_field},
we defined the extended renormalisation map $\bar{\Phi}_{j+1}$ corresponding to the finite torus $\Lambda_N$. 
In this section, we analyse the limit (as $N \to \infty$) of the final renormalisation group coordinates $(E_N, e_N, U_N, \KK_N, \Psi_N)_{N\geq 0}$
obtained by the iteration of the renormalisation group map up to scale $N$, with initial conditions provided by Theorem~\ref{thm:tuning_s}.
This limit is not exactly as the same as the limit $j\rightarrow \infty$ of the local infinite volume limit; in the former limit the size of the torus $\Lambda_N$ is also varying as $N\rightarrow \infty$.
For this reason, we temporarily write the dependence on $\Lambda_N$ of the coordinates explicitly in the following theorem and the corollary, e.g., the coordinates will be denoted $(E^{\Lambda_N}_j, e_j^{\Lambda_N}, U^{\Lambda_N}_{j}, \KK_j^{\Lambda_N}, \Psi_j^{\Lambda_N})$ and the renormalisation group map will be denoted $\Phi_{j+1}^{\Lambda_N}$ and $\bar{\Phi}_{j+1}^{\Lambda_N}$ for the {bulk} and the extended flows, respectively.

\begin{theorem} \label{thm:dynamics_of_KK_j}
Let $J$ be any finite-range step distribution as in Theorem~\ref{thm:highbeta-Z2},
choose the parameters as in Section~\ref{sec:parameters},
assume that $\beta \geq \beta_0(J)$ as in Theorem~\ref{thm:tuning_s},
and let $(E^{\Lambda_N}_j, U^{\Lambda_N}_j, K^{\Lambda_N}_j)$ be the 
(bulk) renormalisation group map on $\Lambda_N$ as in Theorem~\ref{thm:tuning_s}, i.e.,
\begin{equation}
(E^{\Lambda_N}_{j+1}, U^{\Lambda_N}_{j+1}, K_{j+1}^{\Lambda_N} (\cdot ; 0)) = \Phi_{j+1}^{\Lambda_N} (E^{\Lambda_N}_j, U^{\Lambda_N}_j, K_j^{\Lambda_N} (\cdot ; 0)), \quad 0\leq j \leq N-1 .
\end{equation}
Assume that $(u_j)_{j\geq 0}$ satisfies \ref{assump:u}, and
define $(e_j)_{0 \leq j \leq N}$, 
$(\Psi_j^{\Lambda_N} )_{0\leq j \leq N}$, $(K_j^{\Lambda_N} (\cdot ; (\Psi^{\Lambda_N}_k)_{k<j} ) )_{0\leq j \leq N}$ inductively by
\begin{align}
& \Psi_j^{\Lambda_N} = \cF_{\Psi} [u_j, U^{\Lambda_N}_j , K^{\Lambda_N}_j(\cdot ; (\Psi_{k})_{k<j}) ; j ] \label{eq:Psi_j^Lambda_N_definition} \\
& K_{j+1}^{\Lambda_N} (\cdot ; (\Psi^{\Lambda_N}_k)_{k\leq j}) = \cK_{j+1}^{\Psi, \Lambda_N} ( U^{\Lambda_N}_j, \KK^{\Lambda_N}_j, \Psi^{\Lambda_N}_j ) \\
& e^{\Lambda_N}_{j+1} = e^{\Lambda_N}_j + \mathfrak{e}^{\Lambda_N}_{j+1} (\KK^{\Lambda_N}_j, \Psi^{\Lambda_N}_j)
\end{align}
with initial conditions $K^{\Lambda_N}_0 (X) = 1_{X = \emptyset}$ and $e^{\Lambda_N}_0 = 0$.
Then there exists $C > 0$ 
such that for all $N \geq 1$ and $0\leq j \leq N$,
if $L$ and $j_u$ are large enough, then
\begin{equation}
 \max\big\{ \norm{\KK_j^{\Lambda_N}}_{\Omega_j^{\KK}} ,\, \norm{\Psi^{\Lambda_N}_j}_{\Omega_j^{\Psi}} \big\}  \leq C L^{-\alpha  j},
\label{eq:vec_V,Psi_decay_bound}
\end{equation}
with decay factor $\alpha \equiv  \alpha (\beta, J ) >0$ as in Theorem~\ref{thm:tuning_s}.
\end{theorem}

\begin{proof}
  
The asserted exponential
decay in $j$ (uniform in $N$) is almost immediate from Theorems~\ref{thm:RG_without_external_field}, \ref{thm:tuning_s}, and \ref{thm:local_part_of_K_j+1_external_field}, as we now explain. Throughout the remainder of the proof, we drop the superscripts $N$ and $\Lambda_N$.
All the following estimates hold uniformly in $N$. 
By Theorem~\ref{thm:tuning_s}, it has already been shown that 
$\omega_j \in \cY_j (\tilde{\epsilon}_{nl}, C_{\Psi})$ 
and $\norm{(U_j, K_j (\cdot ; 0)  )}_{\Omega_j} \leq C L^{-\alpha j}$ for all $j \leq N$.
We will now argue that there is $C' > 0$ such that, for all $j$, both
\begin{align}
& \label{eq:dynamics_of_K_j_induction_hp}
\norm{ K_j (\cdot ; (\Psi_k)_{k<j }) - K_j (\cdot ; 0) }_{ \Omega_j^{K}} \leq C' 
L^{-\alpha j}, \\
& \label{eq:psi_j-ind}
\norm{\Psi_j}_{\Omega_j^{\Psi}} \leq C_{\Psi} (C + C') L^{-\alpha j} 
\end{align}
hold, where $C$ refers to the constant in  the bound $\norm{(U_j, K_j (\cdot ; 0)  )}_{\Omega_j} \leq C L^{-\alpha j}$.
 The claim then immediately follows by combining these two estimates with \eqref{eq:finalbounds}.
We now show these two bounds by induction. 
For $j\leq j_u$ there is nothing to prove, as $\Psi_j \equiv 0$ and $K_j (\cdot ; (\Psi_k)_{k<j}) \equiv K_j (\cdot ; 0)$.
Now assume \eqref{eq:dynamics_of_K_j_induction_hp} and \eqref{eq:psi_j-ind} hold for some $j \in [j_u, N)$.   
If $j_u$ is sufficiently large, then these bounds and Lemma~\ref{lemma:G_j^Psi_is_Psi_regulator} imply that $\omega_j$ falls into the admissible range of Theorem~\ref{thm:local_part_of_K_j+1_external_field}, 
i.e., $(U^{\Lambda_N}_j, \KK_j^{\Lambda_N}, \Psi_j^{\Lambda_N}) \in \cY_j (\tilde{\epsilon}_{nl}, C_{\Psi})$.
Then \eqref{eq:bound_for_L_j_K_j_external_field} and linearity of $\mathcal{L}_{j+1}^{\Psi}$ 
give for $\omega_j = (U_j, \KK_j, \Psi_j) \in \cY_j (\epsilon, C_{\Psi})$ (with $\epsilon \leq \tilde{\epsilon}_{nl}$)
\begin{equation}
  \norm{ \cL_{j+1}^{\Psi} (\omega_j) (\cdot ; 0 ) - \cL_{j+1}^{\Psi} (\omega_j) (\cdot ; (\Psi_k)_{k<j }, 0 ) }_{ \Omega_{j +1}^{K}}
  \leq  C_1  C_{\Psi}\alphaLoc^{\Psi} \norm{ K^{\Lambda_N}_j (\cdot ; (\Psi_k)_{k<j }) - K^{\Lambda_N}_j (\cdot ; 0) }_{ \Omega_j^{K}}  \label{eq:cL_diff1}
\end{equation}
and \eqref{eq:bound_for_derivative_of_Nj_external_field} gives
\begin{equation}
\norm{ \cM_{j+1}^{\Psi} (\omega_j) (\cdot ; 0 ) - \cM_{j+1}^{\Psi} (\omega_j) (\cdot ; (\Psi_k)_{k<j }, 0 ) }_{ \Omega_{j + 1}^{K}} \leq C_2 \norm{ K^{\Lambda_N}_j (\cdot ; (\Psi_k)_{k<j }) - K^{\Lambda_N}_j (\cdot ; 0) }_{ \Omega_j^{K}} \epsilon .  \label{eq:cM_diff1}
\end{equation}
Here $((\Psi_k)_{k<j }, 0 )$ refers to $(\Psi'_k)_{k\leq j}$ with $\Psi'_k=\Psi_k$ for $k<j$ and $\Psi_j'=0$.
For $\epsilon$ sufficiently small in $\alphaLoc^{\Psi}$ and $(C_2 (\beta, A,L))^{-1}$,  \eqref{eq:dynamics_of_K_j_induction_hp},  \eqref{eq:cL_diff1} and \eqref{eq:cM_diff1} imply
\begin{equation}
\norm{ K^{\Lambda_N}_{j+1} (\cdot ; 0 ) - K^{\Lambda_N}_{j+1} (\cdot ; (\Psi_k)_{k<j }, 0 ) }_{ \Omega_{j + 1}^{K}}
\leq 2 C_1 C_{\Psi} C'  \alphaLoc^{\Psi}  L^{-\alpha j}
\end{equation}
Similar arguments gives
\begin{equation}
  \norm{ K^{\Lambda_N}_{j+1} (\cdot ; (\Psi_k)_{k \leq j }) - K^{\Lambda_N}_{j+1} (\cdot ; (\Psi_k)_{k<j }, 0 ) }_{ \Omega_{j + 1}^{K}}
  \leq 2 C_1  C_{\Psi} \alphaLoc^{\Psi} \norm{\Psi_j}_{{\Omega}_j^{\Psi}} 
\end{equation}
Together with \eqref{eq:psi_j-ind}, these  inequalities imply
\begin{align}
\norm{ K^{\Lambda_N}_{j+1} (\cdot ; (\Psi_k)_{k\leq j }) - K^{\Lambda_N}_{j+1} (\cdot ; 0) }_{ \Omega_{j+1}^{K}} \leq  C'' L^{\alpha}  \alphaLoc^{\Psi} L^{-\alpha (j+1)} .
\end{align}
To proceed, we need the fact that $L^{\alpha} \leq C (L^2 \alphaLoc)^{-1}$ for some $C >0$, see the last remark of Theorem~\ref{thm:tuning_s}.
Also since $\alphaLoc^{\Psi} = (\log L)^{-1} O (L^2 \alphaLoc)$, we now have $L^{\alpha} \alphaLoc^{\Psi} \leq C /\log L$ and therefore
\begin{align}
\norm{ K^{\Lambda_N}_{j+1} (\cdot ; (\Psi_k)_{k\leq j }) - K^{\Lambda_N}_{j+1} (\cdot ; 0) }_{ \Omega_{j+1}^{K}} \leq \frac{C'''}{\log L}  L^{-\alpha (j+1)}
\end{align}
which completes the induction step for \eqref{eq:dynamics_of_K_j_induction_hp} after choosing $C' \log L \geq C'''$. 
To obtain \eqref{eq:psi_j-ind} at scale $j+1$, one now uses that $\norm{(U_{j+1}, K_{j+1} (\cdot ; 0)  )}_{\Omega_j} \leq C L^{-\alpha (j+1)}$ by Theorem~\ref{thm:tuning_s}, and the fact that $\norm{K_{j+1} (\cdot ; (\Psi_k)_{k\leq j})}_{\Omega_{j+1}^K} \leq (C + C') L^{-\alpha {j+1}}$ which follows by combining with the newly proved \eqref{eq:dynamics_of_K_j_induction_hp} at scale $j+1$, along with the fact that
 $\norm{\Psi_{j+1}}_{\Omega_{j+1}^{\Psi}} \leq C_{\Psi} \norm{\KK_{j+1} (\cdot ; (\Psi_k)_{k\leq j}))}_{\Omega_{j+1}^{\KK}}$ by Lemma~\ref{lemma:G_j^Psi_is_Psi_regulator}.
 
\end{proof}

\begin{corollary} \label{cor:dynamics_of_e_j}
  Under the assumptions of Thereom~\ref{thm:dynamics_of_KK_j}, 
\begin{equation}
	|e_{N}^{\Lambda_N}| \leq O\Big( \sum_{j\geq j_u} \norm{\KK^{\Lambda_N}_{j}}_{ \Omega_j^{\KK}} \Big) \leq O( {L}^{-\alpha j_{u}} )
\end{equation}
for $j_u$ from \ref{assump:u}, uniformly in $N$.
\end{corollary}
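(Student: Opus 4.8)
The plan is to telescope the one-point energy recursion, observe that the increments vanish until the onset scale $j_u$ of the external field, bound each remaining increment by a constant times $\|\KK_j^{\Lambda_N}\|_{\Omega_j^{\KK}}$ via Theorem~\ref{thm:e_j_estimate}, and then conclude by a geometric summation using the decay supplied by Theorem~\ref{thm:dynamics_of_KK_j}.

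First, since $e_0^{\Lambda_N}=0$ and $e_{j+1}^{\Lambda_N}=e_j^{\Lambda_N}+\mathfrak{e}_{j+1}^{\Lambda_N}(\KK_j^{\Lambda_N},\Psi_j^{\Lambda_N})$, one has $e_N^{\Lambda_N}=\sum_{j=0}^{N-1}\mathfrak{e}_{j+1}^{\Lambda_N}(\KK_j^{\Lambda_N},\Psi_j^{\Lambda_N})$, hence $|e_N^{\Lambda_N}|\le\sum_{j=0}^{N-1}|\mathfrak{e}_{j+1}^{\Lambda_N}|$. Next I claim $\mathfrak{e}_{j+1}^{\Lambda_N}=0$ for $j\le j_u$. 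Indeed, by $(\textnormal{A}_u)$ we have $u_j=0$ for $j\le j_u$, and then in Definition~\ref{def:Psi_j} every factor $(e^{U_j(\cdot,\varphi+u_j)}-e^{U_j(\cdot,\varphi)})^{X\setminus Y}$ vanishes unless $X\setminus Y=\emptyset$, so $\Psi_j^{\Lambda_N}=\cF_{\Psi}[0,U_j^{\Lambda_N},K_j^{\Lambda_N}(\cdot;(\Psi_k)_{k<j})]=-K_j^{\Lambda_N}+K_j^{\Lambda_N}=0$. Comparing \eqref{eq:expression_for_K_j+1_external_field} with the bulk map then shows inductively that $K_j^{\Lambda_N}(\cdot;(\Psi_k)_{k<j})=K_j^{\Lambda_N}(\cdot;0)$ for $j\le j_u+1$; substituting $\Psi_j^{\Lambda_N}=0$ and $K_j^{\Lambda_N}(\cdot;(\Psi_k)_{k<j})=K_j^{\Lambda_N}(\cdot;0)$ into \eqref{eq:e_j+1_definition} gives $\mathfrak{e}_{j+1}^{\Lambda_N}=0$ for $j\le j_u$. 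Thus $|e_N^{\Lambda_N}|\le\sum_{j=j_u+1}^{N-1}|\mathfrak{e}_{j+1}^{\Lambda_N}|$.

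For the per-scale bound, recall from the proof of Theorem~\ref{thm:dynamics_of_KK_j} that $\omega_j^{\Lambda_N}=(U_j^{\Lambda_N},\KK_j^{\Lambda_N},\Psi_j^{\Lambda_N})\in\cY_j(\tilde{\epsilon}_{nl},C_{\Psi}(M_u))$ for all $j\le N$, so Theorem~\ref{thm:e_j_estimate} applies and yields $|\mathfrak{e}_{j+1}^{\Lambda_N}|\le CC_{\Psi}(M_u)A^{-1}\|\omega_j^{\Lambda_N}\|_{\bar{\Omega}_j}$. Inspecting the proof of Theorem~\ref{thm:e_j_estimate}, this bound in fact only involves $\|\Psi_j^{\Lambda_N}\|_{\Omega_j^{\Psi}}$ and $\|\KK_j^{\Lambda_N}\|_{\Omega_j^{\KK}}$, each multiplied by a convergent geometric sum over small sets containing the origin; and by condition~(2) in the definition of $\cY_j$ together with Lemma~\ref{lemma:G_j^Psi_is_Psi_regulator} (as already used in the proof of Theorem~\ref{thm:dynamics_of_KK_j}) one has $\|\Psi_j^{\Lambda_N}\|_{\Omega_j^{\Psi}}\le C_{\Psi}(M_u)\|\KK_j^{\Lambda_N}\|_{\Omega_j^{\KK}}$. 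Hence $|\mathfrak{e}_{j+1}^{\Lambda_N}|\le C(M_u)\|\KK_j^{\Lambda_N}\|_{\Omega_j^{\KK}}$, uniformly in $N$. Summing over $j_u+1\le j\le N-1$ and adding the nonnegative term $j=j_u$ only enlarges the right-hand side, so $|e_N^{\Lambda_N}|\le C(M_u)\sum_{j\ge j_u}\|\KK_j^{\Lambda_N}\|_{\Omega_j^{\KK}}$, which is the first asserted inequality. Finally, Theorem~\ref{thm:dynamics_of_KK_j} gives $\|\KK_j^{\Lambda_N}\|_{\Omega_j^{\KK}}\le CL^{-\alpha j}$ with $C=C(M_u)$ and $\alpha=\alpha(\beta,J)>0$, whence $\sum_{j\ge j_u}\|\KK_j^{\Lambda_N}\|_{\Omega_j^{\KK}}\le C\sum_{j\ge j_u}L^{-\alpha j}=\frac{C}{1-L^{-\alpha}}L^{-\alpha j_u}\le C'L^{-\alpha j_u}$; since all estimates are uniform in $N$, this is the second asserted inequality.

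The argument is essentially a bookkeeping exercise on top of Theorems~\ref{thm:e_j_estimate} and~\ref{thm:dynamics_of_KK_j}; the only points requiring a little care are the vanishing $\mathfrak{e}_{j+1}^{\Lambda_N}=0$ for $j\le j_u$ — which rests on the simple algebraic fact that $\cF_{\Psi}$ degenerates to the zero activity when the external field is absent, and on the attendant collapse of the $\Psi$-decorated $K$-flow onto the bulk flow at those scales — and invoking Theorem~\ref{thm:e_j_estimate} in the sharper form that features only $\|\KK_j\|_{\Omega_j^{\KK}}$ rather than the full $\bar{\Omega}_j$-norm.
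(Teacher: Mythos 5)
Your proof is correct and follows essentially the same route as the paper: telescope the recursion for $e_j^{\Lambda_N}$, observe that $\Psi_j=0$ (and hence $K_j(\cdot;(\Psi_k)_{k<j})=K_j(\cdot;0)$) for $j\le j_u$ so the increments $\mathfrak{e}_{j+1}$ vanish at those scales, bound the remaining increments via Theorem~\ref{thm:e_j_estimate}, and sum the geometric series supplied by Theorem~\ref{thm:dynamics_of_KK_j}. The only extra care you take — extracting from the proof of Theorem~\ref{thm:e_j_estimate} that the increment is controlled by $\|\Psi_j\|_{\Omega_j^\Psi}$ and $\|\KK_j\|_{\Omega_j^{\KK}}$ rather than the full $\bar\Omega_j$-norm — is a reasonable tightening; note only that Lemma~\ref{lemma:G_j^Psi_is_Psi_regulator}(1) actually gives $\|\Psi_j\|\le C_\Psi\max\{\|U_j\|_{\Omega_j^U},\|\KK_j\|_{\Omega_j^{\KK}}\}$ rather than $\le C_\Psi\|\KK_j\|$, but since Theorems~\ref{thm:tuning_s} and~\ref{thm:dynamics_of_KK_j} bound both $\|U_j\|$ and $\|\KK_j\|$ by $O(L^{-\alpha j})$ this does not affect the final estimate.
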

\begin{proof}
We start from the the explicit expression
$e_n^{\Lambda_N} =
\sum_{j\leq n-1}  \mathfrak{e}_{j+1} (\KK^{\Lambda_N}_j, \Psi_j^{\Lambda_N}) $
and use \eqref{eq:e_j+1_bound}.
To see that the sum actually only starts from $j=j_u$,
note that, by construction, $\Psi_k \equiv 0$ for $k< j_u$, and hence $K_j^{\Lambda_N} (\cdot ; (\Psi_k)_{k<j }) = K_j^{\Lambda_N} (\cdot ; 0)$ for $j\leq j_u$
{which implies that $\mathfrak{e}_{j+1}=0$ by its definition, \eqref{eq:e_j+1_definition}.}
Hence 
$|e_N^{\Lambda_N}| \leq C \sum_{j_u \leq j \leq N-1} \norm{\KK_j^{\Lambda_N}}_{ \Omega_j^{\KK}}$ 
and the 
{sum is uniformly bounded in $N$} because $\norm{\KK_j^{\Lambda_N}}_{ \Omega_j^{\KK}} = O(L^{-\alpha j})$ uniformly in $N$.
\end{proof}

Theorem~\ref{thm:Z_N_ratio_conclusion}  is almost direct from the above two results.

\begin{proof}[Proof of Theorem~\ref{thm:Z_N_ratio_conclusion}]
We first note that Lemma~\ref{lem:extfield_bd} implies that $(u_j)_{j\geq 0}$ defined by \eqref{eq:extfield_def} satisfies \ref{assump:u} 
with some $j_u = j_f$, 
and so Theorem~\ref{thm:dynamics_of_KK_j} and Corollary~\ref{cor:dynamics_of_e_j} may be used. 
We then assume that $\beta \geq \beta_0(J)$ with $\beta_0(J)$ as supplied by Theorem~\ref{thm:tuning_s}, pick $L=L(J)$ large enough (and of the form specified in Section~\ref{sec:parameters}) such that the conclusions Theorem~\ref{thm:dynamics_of_KK_j} hold and set $A(J)= A_0'(L)$ for this choice of $L$.

For a constant field $\zeta$, we have $\nabla \zeta = 0$ and $G_N^{\Psi} (X, \zeta) = G_N^{\Psi} (X, 0)$ so,
with $W_N$ denoting the non-gradient term (involving the cosines) in \eqref{eq:U_j_form} with $j=N$,
\begin{align}
 e^{E_N |\Lambda_N| - e_N} Z_N  (u, \zeta + u_N ) 
& =  e^{\frac{1}{2} s_N |\nabla (\zeta + u_N)  |^2_{\Lambda_N} + W_N (\Lambda_N, \zeta + u_N) } + K_N (\Lambda_N, \zeta + u_N ; (\Psi_k)_{k<N} ) \nnb 
& =  e^{\frac{1}{2} s_N |\nabla \zeta  |^2_{\Lambda_N} + W_N (\Lambda_N, \zeta ) } + K_N (\Lambda_N, \zeta  ; (\Psi_k)_{k<N} ) + \Psi_N (\Lambda_N, \zeta) \nnb 
&= 1 + O \big( \norm{W_N}_{\Omega_N^U} + \norm{K_N (\cdot ; (\Psi_k)_{k<N}  )}_{\Omega_N^K}  G_N^{\Psi} (\Lambda_N, 0) \big)
\end{align}
whenever $\norm{W_N}_{\Omega_N^U} \leq 1$ and we have used $\Psi_N = \cF_{\Psi}[u_N, U_N, K_N (\cdot ; (\Psi_k)_{k<N}) ; N]$ and Proposition~\ref{prop:reblocking_Z_with_Psi} for the second equality. Also Lemma~\ref{lemma:G_j^Psi_is_Psi_regulator} bounds $\Psi_N$ in terms of $K_N (\cdot ; (\Psi_k)_{k<N})$ in the third equality.
Then by \eqref{eq:Z^tilde_definition} and \eqref{eq:G_j^Psi_zero_point_bound},
\begin{align}\label{eq:Q_N-int}
\tilde{Z}_N (u, 0) &= \E_{t_N Q_N} Z_N (u, \zeta + u_N ) \nnb
&= e^{-E_N |\Lambda_N| + e_N} \big( 1 + O( \norm{W_N}_{\Omega_N^U} + \norm{K_N (\cdot ; (\Psi_k)_{k<N}  )}_{\Omega_N^K} ) \big).
\end{align}
For $\norm{W_N}_{\Omega_N^U} + \norm{K_N (\cdot ; 0 )}_{\Omega_N^K}$ sufficiently small, it follows that
\begin{equation} 
\frac{\tilde{Z}_N (u, 0)}{\tilde{Z}_N (0, 0)} 
  =\exp( e_N )  \frac{ 1 +  O( \norm{W_N}_{\Omega_N^U} + \norm{K_N (\cdot ; (\Psi_k)_{k<N}  )}_{\Omega_N^K} ) }{1 +  O( \norm{W_N}_{\Omega_N^U} + \norm{K_N (\cdot ; 0 )}_{\Omega_N^K} ) }.
\end{equation}
But $\norm{W_N}_{\Omega_N^U} \leq C L^{-\alpha N}$ by Theorem~\ref{thm:tuning_s},
$\norm{\KK_N}_{\Omega_N^{\KK}} \leq C_1 L^{-\alpha N}$ 
by Theorem~\ref{thm:dynamics_of_KK_j},
and 
$|e_N| \leq C_2  L^{-\alpha j_f}$ 
by Corollary~\ref{cor:dynamics_of_e_j}. This implies the desired conclusion.
\end{proof}

\appendix
\section{Existence of infinite-volume limit}
\label{app:corrineq}

We recall the Fr\"ohlich--Park--Ginibre inequalities:
Let $\Lambda$ be finite, let $C$ be a positive definite matrix, and let $\avg{\cdot}_C$ be the
expectation of the associated (generalised) Discrete Gaussian model:
\begin{equation}
\avg{F}_C \propto \sum_{\sigma \in \Z^\Lambda} e^{-\frac12(\sigma, C^{-1}\sigma)} F(\sigma).
\end{equation}
By taking limits, the definition of $\avg{\cdot}_C$ can also be extended to $C$ positive semidefinite.
The finite volume states $\avg{\cdot}^{\Lambda}_{J,\beta}$ given by \eqref{eq:DG_model_1_external_field}
then correspond to $C=\beta(-\Delta_J)^{-1}$ when $\sigma$ is identified up to  constants (as we do), see also \cite[Lemma~\ref{lemma:m2to0}]{dgauss1}.
The results of \cite[Section~3]{MR496191} (see also \cite[Proposition~1.2]{1711.04720}) then imply
that for $f: \Lambda\to\R$ with $\sum f=0$:
\begin{align}
  \label{eq:ginibre_1}
  \avg{e^{(f,\sigma)}}_{J,\beta}^{\Lambda} &\leq e^{\frac12 (f,(-\Delta_J)^{-1}_{\Lambda} f)},
  \\
        \label{eq:ginibre_3}
  \avg{(f,\sigma)^2}_{J,\beta}^{\Lambda} &\leq (f,(-\Delta_J)^{-1}_{\Lambda}f).
\end{align}
Moreover, \cite[Corollary~3.2 (1)]{MR496191} implies that
\begin{equation} \label{eq:ginibre_mon}
  \avg{e^{i(\varphi,f)}}_{C_1}
  \leq
  \avg{e^{i(\varphi,f)}}_{C_2}
  \qquad \text{if $C_2 \leq C_1$.}
\end{equation}

\begin{proposition} \label{prop:infvollimitDG}
  Let $L>1$ be an integer.
  For any finite-range step distribution $J$
  and any sequence of discrete tori $\Lambda_N$ with side lengths $L^N$, with $N \in \N$,
  the measures $\avg{\cdot}^{\Lambda_N}_{J,\beta}$ converge weakly as $N\to\infty$
  (when the field is identified up to constants).
  For any $f:\Z^d\to\R$ with compact support and $\sum f=0$, one also has
  $\avg{e^{(f,\sigma)}}^{\Lambda_N}_{J,\beta} \to    \avg{e^{(f,\sigma)}}$ where $\avg{\cdot}=\lim_{N\to\infty }\avg{\cdot}_{J,\beta}^{\Lambda_N}$ is the weak limit.
\end{proposition}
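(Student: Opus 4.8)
The plan is to reduce the statement to the convergence, as $N\to\infty$, of the quantities $\avg{e^{i(f,\sigma)}}^{\Lambda_N}_{J,\beta}$ and $\avg{e^{(f,\sigma)}}^{\Lambda_N}_{J,\beta}$ for $f\colon\Z^2\to\R$ of compact support with $\sum f=0$. Convergence of the characteristic functions along all such $f$ is exactly convergence of the finite-dimensional marginals of the gradient field (on the Polish space of gradient configurations with the product topology), and together with tightness this gives the asserted weak convergence; tightness follows from the second-moment bound \eqref{eq:ginibre_3} together with the elementary fact that $(f,(-\Delta_J)^{-1}_{\Lambda_N}f)$ stays bounded in $N$ (in Fourier variables it is a Riemann sum over the dual lattice $\frac{2\pi}{L^N}\Z^2$, converging, since $\hat f(0)=0$, to $(f,(-\Delta_J)^{-1}_{\Z^2}f)<\infty$).

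To prove convergence of $\avg{e^{i(f,\sigma)}}^{\Lambda_N}_{J,\beta}$ I would sandwich the torus state between box states with free (Neumann) and Dirichlet boundary conditions, retaining a strictly positive mass $m^2>0$ throughout (recovering the states of interest as $m^2\downarrow 0$, cf.~\eqref{eq:mass-limit}) so that all covariances stay strictly positive definite and \eqref{eq:ginibre_mon} applies unconditionally. The two basic comparisons are: (i) the periodic Laplacian on $\Lambda_N$ equals the free-boundary Laplacian of the box $B_N$ of side $L^N$ plus the positive-semidefinite form of the wrap-around bonds, so $(-\Delta_J^{\Lambda_N}+m^2)^{-1}\le(-\Delta_J^{B_N,\mathrm{free}}+m^2)^{-1}$ and hence, by \eqref{eq:ginibre_mon}, $\avg{e^{i(f,\sigma)}}^{\Lambda_N}_{\beta,m^2}\ge\avg{e^{i(f,\sigma)}}^{\mathrm{free}}_{B_N,m^2}$ once $\supp f\subset B_N$; and (ii) for a box $B'$ in the interior of a fundamental domain of $\Lambda_N$ (say of side $L^{N-1}$), adding a mass $\delta^{-1}$ on $\Lambda_N\setminus B'$ only increases $(-\Delta_J^{\Lambda_N}+m^2)$ by a positive-semidefinite diagonal, hence decreases its inverse; letting $\delta\downarrow 0$ pins the field to $0$ outside $B'$ and turns this inverse into (the zero-extension of) $(-\Delta_J^{B',\mathrm{Dir}}+m^2)^{-1}$, so \eqref{eq:ginibre_mon} gives $\avg{e^{i(f,\sigma)}}^{\Lambda_N}_{\beta,m^2}\le\avg{e^{i(f,\sigma)}}^{\mathrm{Dir}}_{B',m^2}$. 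The same mass-adding device, applied to the box states themselves (a larger free box reduces to a smaller one by removing the bonds crossing its inner boundary; a larger Dirichlet box reduces to a smaller one by pinning the annulus between them), shows that $\avg{e^{i(f,\sigma)}}^{\mathrm{free}}_{B,m^2}$ is nondecreasing and $\avg{e^{i(f,\sigma)}}^{\mathrm{Dir}}_{B,m^2}$ nonincreasing as $B\uparrow\Z^2$; both therefore converge, and one checks that the two limits coincide (both equal the characteristic function of the unique infinite-volume Discrete Gaussian measure with mass $m^2>0$, whose uniqueness is forced by the exponential decay coming from the mass). Feeding this into the sandwich from (i)--(ii) yields $\avg{e^{i(f,\sigma)}}^{\Lambda_N}_{\beta,m^2}\to\Phi_{m^2}(f)$ for each $m^2>0$, and a monotone-limit interchange in $m^2$ (the inequalities above are monotone in $m^2$ as well, and $\avg{e^{i(f,\sigma)}}^{\Lambda_N}_{\beta,m^2}\to\avg{e^{i(f,\sigma)}}^{\Lambda_N}_{J,\beta}$ by \eqref{eq:mass-limit}) gives $\avg{e^{i(f,\sigma)}}^{\Lambda_N}_{J,\beta}\to\Phi(f):=\lim_{m^2\downarrow 0}\Phi_{m^2}(f)$. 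For the moment generating functions one runs the identical sandwich with $e^{i(f,\sigma)}$ replaced by $e^{(f,\sigma)}$, or, more cheaply, deduces uniform integrability of $e^{(f,\sigma)}$ under $\avg{\cdot}^{\Lambda_N}_{J,\beta}$ from \eqref{eq:ginibre_1} applied with $2f$ and combines it with the weak convergence just obtained.

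The main obstacle — indeed essentially all of the content — is this package of covariance comparisons together with the identification of the free- and Dirichlet-boundary-condition infinite-volume limits: the monotonicity facts themselves are classical electrical-network (Rayleigh/Schur-complement) statements, but they must be carried out carefully for the general finite-range Laplacian $-\Delta_J$ rather than just the nearest-neighbour one, one must track throughout the (constant) zero mode — which is precisely why it is convenient to keep a positive mass until the very end — and one must justify that the various degenerate limits ($\delta\downarrow 0$, $B\uparrow\Z^2$, $m^2\downarrow 0$) may be taken in the stated order; granting all this, the remaining tightness and squeezing steps are routine.
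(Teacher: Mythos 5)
Your proposal takes a genuinely different — and substantially more involved — route than the paper, and it contains a gap at its central step.

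The paper's proof is much shorter because it exploits a special feature of the sequence of tori: since $L^N$ divides $L^{N+1}$, the $L^N$-periodic functions form a nested family of domains, and the (same) quadratic form $(\sigma,-\Delta_J\sigma)$ restricted to these domains gives a \emph{decreasing} sequence of operators, hence an \emph{increasing} sequence of covariances. Combined with \eqref{eq:ginibre_mon}, this makes $S_N(f)=\avg{e^{i(f,\sigma)}}^{\Lambda_N}_{J,\beta}$ monotone increasing in $N$ (and bounded by $1$), so convergence is immediate. No boundary-condition comparisons, no mass regularisation, and no uniqueness statement are needed; the limit is then identified as the characteristic functional of a measure via Minlos, using \eqref{eq:ginibre_3} for the required continuity. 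Your plan instead sandwiches the torus between free and Dirichlet box states, shows each sequence of box states is monotone (which you correctly note amounts to electrical-network / Schur-complement comparisons plus \eqref{eq:ginibre_mon}), and then wants to conclude by observing that the two monotone limits agree.

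The gap is precisely in that last step. You assert that both box limits ``equal the characteristic function of the unique infinite-volume Discrete Gaussian measure with mass $m^2>0$, whose uniqueness is forced by the exponential decay coming from the mass.'' This does not follow. Exponential decay of the \emph{Gaussian} Green's function $(-\Delta_J+m^2)^{-1}$ is elementary, but the Discrete Gaussian is not Gaussian — it is the Gaussian field conditioned to $2\pi\Z$-valued spins — and neither uniqueness of its infinite-volume Gibbs measure nor insensitivity of the infinite-volume limit to boundary conditions is automatic from exponential decay of the underlying Gaussian covariance. Establishing that $S^{\mathrm{free}}_\infty(f)=S^{\mathrm{Dir}}_\infty(f)$ for the Discrete Gaussian at a fixed $m^2>0$, uniformly in $\beta$, is a nontrivial statement that would itself require a real argument (e.g.\ a cluster expansion or a duality/coupling argument), and none is given. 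Without it, the sandwich only yields $S^{\mathrm{free}}_\infty(f)\le\liminf_N S_N(f)\le\limsup_N S_N(f)\le S^{\mathrm{Dir}}_\infty(f)$, which does not establish convergence. The rest of your outline (tightness from \eqref{eq:ginibre_3}, uniform integrability of $e^{(f,\sigma)}$ from \eqref{eq:ginibre_1} applied to $2f$, and the interchange of the $m^2\downarrow 0$ limit via \eqref{eq:mass-limit}) is sound, but the missing coincidence of the two boundary-condition limits is the heart of the matter. The paper's observation about nested periodic domains is exactly what lets one bypass this difficulty.
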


\begin{proof}
We consider the Laplacian $-\Delta^{\Lambda_N}$ as an operator on $\ell^2(\Z^d)$ with
domain
\begin{equation}
  D(-\Delta^{\Lambda_N}) = \{f \in \ell^2(\Z^d): f(0)=0,\; f(x)=f(x+L^Ny) \; \text{for any $y \in \Z^d$}\}.
\end{equation}
Then clearly $D(-\Delta^{\Lambda_{N}}) \subset D(-\Delta^{\Lambda_{N+1}})$
and $-\Delta^{\Lambda_N} = -\Delta^{\Lambda_{N+1}}$ on $D(-\Delta^{\Lambda_{N}})$.
This implies $-\Delta^{\Lambda_{N}} \geq -\Delta^{\Lambda_{N+1}}$
and hence $(-\Delta^{\Lambda_{N}})^{-1} \leq (-\Delta^{\Lambda_{N+1}})^{-1}$.
From \eqref{eq:ginibre_mon}, it follows that for any $f: \Z^d \to \R$ compactly supported and with $\sum f=0$,
$S_N(f) = \avg{e^{i(f,\varphi)}}^{\Lambda_N}_{J,\beta}$ is increasing in $N$.
In particular, since also $S_N(f) \leq 1$, the limit $S(f) = \lim_{N\to\infty} S_N(f)$ exists.
To show $S(f)$ is the characteristic function of a probability measure on $(2\pi\Z)^{\Z^2}/\text{constants}$ to which $\avg{\cdot}^{\Lambda_N}_{J,\beta}$ converges weakly, we will apply Minlos' theorem.
To this end, we consider $(2\pi\Z)^{\Z^2}/\text{constants}$
as a topological vector space with the topology defined by the condition that $\varphi_k \to \varphi$
in $(2\pi\Z)^{\Z^2}/\text{constants}$ if $(\varphi_k,g) \to (\varphi,g)$ for all compactly supported $g:\Z^d\to \R$ with $\sum g=0$.
In particular, $(2\pi\Z)^{\Z^2}/\text{constants}$ is the dual of a nuclear space.
To apply Minlos' theorem  we need to check that $S$ is continuous in this topology.
But this is immediate from the correlation inequality \eqref{eq:ginibre_3} which implies that
for any $g:\Z^2\to\R$ with compact support and $\sum g=0$,
\begin{equation}
  |S(f+g)-S(f)|
  =\lim_{N\to\infty}   |S_N(f+g)-S_N(f)|
  \leq \lim_{N\to\infty} (g,(-\Delta_J^{\Lambda_N})^{-1}g)
  = (g,(-\Delta_J)^{-1}g),
\end{equation}
from which the continuity is clear.

The final statement about the convergence of $\avg{e^{(f,\sigma)}}^{\Lambda_N}_{J,\beta}$
follows from the weak convergence and
\eqref{eq:ginibre_1} which implies that the random variables $e^{(f,\sigma)}$
are uniformly integrable.
\end{proof}

It is also standard, see \cite{MR2807681} and analogous extensions to the gradient Gibbs setting
as in \cite{MR2228384,MR1463032},
that any limit as in the previous proposition
is translation invariant and satisfies the gradient Gibbs property.
Moreover, the limit satisfies the analogous correlation inequalities.

\begin{proposition}
  The  measure $\avg{\cdot}^{\Z^2}_{J,\beta}$ has tilt $0$, i.e.,
  for each gradient Gibbs state in the ergodic decomposition of $\avg{\cdot}^{\Z^2}_{J,\beta}$ 
  the gradient field has mean $0$.
\end{proposition}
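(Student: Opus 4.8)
The plan is to combine the ergodic decomposition of $\avg{\cdot}^{\Z^2}_{J,\beta}$ with the infinite-volume form of the Ginibre bound \eqref{eq:ginibre_3}, tested against discrete gradients of indicators of large boxes. Write $\avg{\cdot}^{\Z^2}_{J,\beta} = \int \avg{\cdot}_\mu \, d\nu(\mu)$ for the decomposition into ergodic, translation-invariant gradient Gibbs states (which exists by the standard theory, cf.\ \cite{MR2807681,MR2228384,MR1463032}), and let $t(\mu) = (t_1(\mu),t_2(\mu)) \in \R^2$ be the tilt of $\mu$, so that $\avg{\eta_{(y,y+e_i)}}_\mu = t_i(\mu)$ for all $y$ and hence, telescoping along a path and using translation invariance, $\avg{\sigma_x - \sigma_0}_\mu = t(\mu)\cdot x$ (these expectations are finite for $\nu$-a.e.\ $\mu$ since $\avg{\eta_e^2}^{\Z^2}_{J,\beta} < \infty$ by \eqref{eq:ginibre_3}). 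The goal is to prove $t(\mu) = 0$ for $\nu$-a.e.\ $\mu$, from which the claim follows since $i \in \{1,2\}$ is arbitrary.

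First I would record that \eqref{eq:ginibre_3} passes to infinite volume: for any compactly supported $f:\Z^2\to\R$ with $\sum f = 0$,
\[
  \avg{(f,\sigma)^2}^{\Z^2}_{J,\beta} \leq (f,(-\Delta_J)^{-1}f),
\]
because $\avg{(f,\sigma)^2}^{\Lambda_N}_{J,\beta} \to \avg{(f,\sigma)^2}^{\Z^2}_{J,\beta}$ by the weak convergence of Proposition~\ref{prop:infvollimitDG} together with the uniform integrability of $(f,\sigma)^2$ provided by the exponential bound \eqref{eq:ginibre_1}, while $(f,(-\Delta_J^{\Lambda_N})^{-1}f) \to (f,(-\Delta_J)^{-1}f)$ by the same monotonicity as in the proof of Proposition~\ref{prop:infvollimitDG}; this is also covered by the remark preceding that proposition.

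Next, fix $i\in\{1,2\}$, and for an integer $R\geq 1$ set $g_R = \mathbf{1}_{([0,R)\cap\Z)^2}$ and $f_R = \nabla^{e_i} g_R$, which is compactly supported with $\sum f_R = 0$. Using $\sum f_R = 0$ to replace $\sigma_x$ by $\sigma_x - \sigma_0$ and then $\sum_x x\,\nabla^{e_i} g_R(x) = -e_i \sum_x g_R(x)$, one obtains, for $\nu$-a.e.\ $\mu$,
\[
  \avg{(f_R,\sigma)}_\mu = -\Big(\sum_x g_R(x)\Big) t_i(\mu) = -R^2\, t_i(\mu).
\]
On the other hand, in Fourier space $(f_R,(-\Delta_J)^{-1}f_R) = \frac{1}{(2\pi)^2}\int_{[-\pi,\pi]^2} \frac{|e^{ip_i}-1|^2}{\lambda_J(p)}\,|\hat{g}_R(p)|^2\,dp \leq C\,\|g_R\|_{\ell^2(\Z^2)}^2 = C\,R^2$, where $C = C(J,\beta)<\infty$ since the multiplier $p\mapsto |e^{ip_i}-1|^2/\lambda_J(p)$ is bounded on $[-\pi,\pi]^2$: it is continuous and $\lambda_J>0$ away from $p=0$, while near $0$ the numerator vanishes quadratically and $\lambda_J(p)\sim v_J^2|p|^2$ (as $J$ contains the nearest-neighbour edges). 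Combining the two displays with Jensen's inequality inside each ergodic component,
\[
  R^4 \int t_i(\mu)^2 \, d\nu(\mu) = \int \avg{(f_R,\sigma)}_\mu^2 \, d\nu(\mu) \leq \avg{(f_R,\sigma)^2}^{\Z^2}_{J,\beta} \leq C\,R^2,
\]
so $\int t_i(\mu)^2\,d\nu(\mu) \leq C/R^2$; letting $R\to\infty$ gives $t_i(\mu) = 0$ for $\nu$-a.e.\ $\mu$.

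The main obstacle I expect is purely structural bookkeeping rather than analysis: one must make precise that the weak limit genuinely admits an ergodic decomposition into translation-invariant gradient Gibbs states carrying a well-defined tilt, that the identity $\avg{(f_R,\sigma)}_\mu = -R^2 t_i(\mu)$ and the bound $\avg{(f_R,\sigma)^2}^{\Z^2}_{J,\beta}\leq (f_R,(-\Delta_J)^{-1}f_R)$ hold simultaneously for $\nu$-a.e.\ $\mu$, and that both can be integrated against $\nu$. The analytic inputs — the Fourier multiplier bound and the passage of \eqref{eq:ginibre_1}--\eqref{eq:ginibre_3} to infinite volume — are routine.
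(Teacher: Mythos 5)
Your proof is correct, and while it rests on the same two pillars as the paper's --- the Ginibre bound \eqref{eq:ginibre_3} passed to infinite volume, and the ergodic decomposition of the gradient Gibbs measure --- the route you take through them is genuinely different and arguably more direct. The paper applies \eqref{eq:ginibre_3} in Fourier form to show that $\avg{\nabla_{e_i}\sigma(0)\nabla_{e_i}\sigma(x)}^{\Z^2}_{J,\beta}$ has an $L^1$ Fourier transform, invokes Riemann--Lebesgue to deduce decay of this correlation, and then estimates the $L^2$ norm of the Ces\`aro average $\frac{1}{|Q_R|}\sum_{x\in Q_R}\nabla_{e_i}\sigma(x)$ by summing these decaying correlations, concluding via the pointwise ergodic theorem applied within each ergodic component. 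You instead test \eqref{eq:ginibre_3} against $f_R=\nabla^{e_i}\mathbf{1}_{[0,R)^2}$ --- which is essentially the same box average up to boundary terms --- and observe that the Fourier multiplier bound already gives $\avg{(f_R,\sigma)^2}^{\Z^2}_{J,\beta}\leq CR^2$ directly, while the tilt forces $\avg{(f_R,\sigma)}_\mu=-R^2 t_i(\mu)$, so Jensen inside each ergodic component kills $t_i$ as $R\to\infty$. This bypasses Riemann--Lebesgue and the qualitative correlation-decay step entirely, replacing them with one explicit Plancherel bound, and is in that sense cleaner; the paper's version exposes the correlation decay $\avg{\nabla_{e_i}\sigma(x)\nabla_{e_i}\sigma(y)}\to 0$ as a statement of independent interest, which yours does not. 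Both proofs quietly use measurability of $\mu\mapsto t_i(\mu)$ and the fact that the $\nu$-integral recovers the periodic-limit expectation; you flag this as bookkeeping, which is accurate.
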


\begin{proof}
  The proof is analogous to that of \cite[Theorem~3.2]{MR1463032}.
The correlation decay can be replaced by the following application of the Riemann--Lebesgue  lemma.
For $g :\Z^2 \to \R^d$ with compact support, where now $\nabla \sigma:\Z^d \to \R^d$ denotes the vector of discrete
forward derivatives,     \eqref{eq:ginibre_3} implies
\begin{equation}
  \avg{(g,\nabla \sigma)^2}_{J,\beta}^{\Z^2} \leq C\int_{[-\pi,\pi]^2} \frac{|\hat g(p) \cdot p|^2}{|p|^2} \, dp.
\end{equation}
Thus the distributional Fourier transform of $\avg{\nabla_{e_i}\sigma(0)\nabla_{e_i}\sigma(x)}$
is integrable in the Fourier variable.
From this, the Riemann-Lebesgue lemma implies that
\begin{equation}
  \avg{\nabla_{e_i}\sigma(x)\nabla_{e_i}\sigma(y)}_{J,\beta}^{\Z^2} \to 0\qquad (|x-y|\to\infty).
\end{equation}
In particular, for every $i=1, \dots, d$,
with $Q_R = [-R,R]^2 \cap \Z^2$,
\begin{equation}\label{e:ergodic}
  \avgbb{\pbb{\liminf_{R\to\infty}\frac{1}{|Q_R|} \sum_{x\in Q_R} \nabla_{e_i}\sigma(x)}^2}_{J,\beta}
  \leq
  \liminf_{R\to\infty} \frac{1}{|Q_R|^2} \sum_{x,y\in Q_R}
  |\avg{\nabla_{e_i}\sigma(x)\nabla_{e_i}\sigma(y)}_{J,\beta}|
  = 0.
\end{equation}
This implies that every measure $\mu$ in the ergodic decomposition of $\avg{\cdot}^{\Z^2}_{J,\beta}$ has mean $0$ for $\nabla\sigma$ (see e.g.~\cite[Theorem~3.2]{MR1463032} for a similar argument): indeed, for any such $\mu$, by \eqref{e:ergodic} and ergodicity, one deduces that $|Q_R|^{-1} \sum_{x\in Q_R} \nabla_{e_i}\sigma(x)$ converges $\mu$-a.s.~and that the limit vanishes, whence $E_\mu[\nabla_{e_i}\sigma(x)]=0$.
\end{proof}

\section{Properties of the regulator with external field}
\label{app:prop_of_reg_with_ext_field}
\begin{proof}[Proof of Lemma~\ref{lemma:G_change_of_scale_external_field}]
In the proof, the notation
\begin{equation}
W_{j+s} (X, \nabla^a_j \varphi)^2 = \sum_{B\in \cB_{j+s} (X)} \norm{\nabla_{j+s}^a \varphi}^2_{L^{\infty}(B^*)}
\end{equation}
will be used.
For brevity, $s + M^{-1}$ will be denoted $s'$ and $X_{s'}$ will be denoted $X'$.
We will bound each term appearing in $\log G_{j+s} (X, \varphi + \xi_o)$.
First, $\norm{\nabla \varphi}^2_{L^2(X)}$ will be isolated from $\norm{\nabla (\varphi + \xi_o)}^2_{L^2 (X)}$. Let $B\in \mathcal{B}_{j+s} (X)$ and without loss of generality, let $B$, $l_i$ ($i=1,2,3,4$) be as above but $B = [1, L^{j+s}]^2$. Then by discrete integration by parts,
\begin{align}
\sum_{x\in B} \nabla^{e_1} \varphi(x) \nabla^{e_1} \xi_o (x) = -\sum_{x\in l_3} \xi_o ( x) \nabla^{-e_1} \varphi (x)
- \sum_{x\in l_1} \xi_o (x+ e_1) \nabla^{e_1} \varphi(x)
+ \sum_{x\in B} \xi_o (x) \nabla^{e_1} \nabla^{-e_1}  \varphi(x).
\end{align}
Hence in particular, summing this over each direction $\pm e_1, \pm e_2$, $B\in \mathcal{B}_{j+s} (X)$, and using the AM-GM inequality,
\begin{align}
t ( \nabla \varphi, \nabla \xi_o )_X & \leq  \tau t \norm{\xi_o }^2_{L^2_{j+s} (X)} +  \tau^{-1} t \norm{\nabla^2_{j+s} \varphi}^2_{L^2_{j+s} (X)} + \tau t \norm{\xi_o }^2_{L^2_{j+s} (\partial X)} + \tau^{-1} t \norm{\nabla_{j+s} \varphi}^2_{L^2_{j+s} (\partial X)}  \nnb
& \leq  2\tau W_{j+s} (X, \xi_o)^2 +  \tau^{-1} \big( \norm{\nabla_{j+s} \varphi}^2_{L^2_{j+s} (\partial X)} + W_{j+s}(X, \nabla^2_{j+s} \varphi)^2 \big)
\end{align}
for any $\tau >0$, and hence
\begin{equation}
\begin{split}
\norm{\nabla_{j+s} (\varphi + \xi_o)}^2_{L^2_{j+s} (X)} & \leq \norm{\nabla_{j+s'} \varphi}^2_{L^2_{j+s'} (X)} + \norm{\nabla_{j+s} \xi_o}^2_{L^2_{j+s} (X)} \\
& \quad + 2\tau W_{j+s} (X, \xi_o)^2 +  \tau^{-1} \big( \norm{\nabla_{j+s} \varphi}^2_{L^2_{j+s} (\partial X)} + W_{j+s}(X, \nabla^2_{j+s} \varphi)^2 \big).
\end{split} \label{eq:G_change_of_scale1_external_field}
\end{equation}
Next, we will use rather trivial bound on the other two terms of $\log G_{j+s}$ :
\begin{align}
& \norm{\nabla_{j+s} (\varphi + \xi_o)}_{L^2_{j+s} (\partial X)}^2 \leq 2\norm{\nabla_{j+s} \varphi}^2_{L^2_{j+s} (\partial X)} + 2 W_{j+s} ( X, \nabla_{j+s} \xi_o)^2 \label{eq:G_change_of_scale2_external_field} \\ 
& \norm{\nabla^2_j (\varphi +\xi_B)}^2_{L^{\infty} (B^*)} \leq 2 \norm{\nabla^2_j \varphi }^2_{L^{\infty} (B^*)} + 2 \norm{\nabla^2_j \xi_B }^2_{L^{\infty} (B^*)} \label{eq:G_change_of_scale3_external_field}
\end{align}
By \eqref{eq:G_change_of_scale1_external_field}, \eqref{eq:G_change_of_scale2_external_field}, \eqref{eq:G_change_of_scale3_external_field} and setting $c_4 = \max \{ 2c_1, 2\tau c_1, 2 c_2 \}$,
\begin{align}
\begin{split}
\frac{1}{\kappa_L} \log G_{j+s} (X, \varphi, \xi_o, (\xi_B)_B ) \leq c_1 \norm{\nabla_{j+s} \varphi}^2_{L^2_{j+s}(X)} + (2c_2 + c_1 \tau^{-1}) \norm{\nabla_{j+s} \varphi}^2_{L^2_{j+s} (\partial X)} \\
+ 2c_1 (1+\tau^{-1}) W_{j+s} (X, \nabla^2_{j+s} \varphi) + \frac{1}{\kappa_L} \log \max_{\mathfrak{a} \in \{ o \} \cup \cB_{j+s} (X)} g_{j+s} (X, \xi_{\mathfrak{a}}).
\end{split}
\end{align}
Now by repeated application of  the discrete Sobolev trace theorem \cite[\eqref{eq:discrete_int_by_parts_rec_step}]{dgauss1},
\begin{align}
\norm{\nabla_{j+s} \varphi}^2_{L^2_{j+s} (\partial X)} \leq \norm{\nabla_{j+s} \varphi}_{L^2_{j+s} (\partial X')}^2 + 10 \norm{\nabla_{j+s} \varphi}^2_{L^2_{j+s} (X' \backslash X)} + 10 W_{j+s} (\nabla^2_{j+s} \varphi, X' \backslash X)
\end{align}
hence by choosing $\tau = c_1 c_2^{-1}$ and $30c_2 \leq c_1$,
\begin{align}
& \frac{\log( G_{j+s} (X, \varphi, \xi ; t, (t_B))  / \max_{\mathfrak{a} } g_{j+s} (X, \xi_{\mathfrak{a}})  )}{\kappa_L} \nnb
& \leq c_1 \norm{\nabla_{j+s} \varphi}^2_{L^2_{j+s} (X')} +3c_2 \norm{\nabla_{j+s} \varphi}^2_{L^2_{j+s} (\partial X')} + 2c_1 (1+ \tau^{-1}) W_{j+s} (\nabla^2_{j+s} \varphi, X') \nnb
& \leq c_1 \norm{\nabla_{j+s'} \varphi}^2_{L^2_{j+s'} (X')} + 3 \, \ell^{-1} c_2 \norm{\nabla_{j+s'} \varphi}^2_{L^2_{j+s'} (\partial X')} + 2 \, \ell^{-2} c_1 (1+ \tau^{-1}) W_{j+s'} (\nabla^2_{j+s'} \varphi, X') . 
\end{align}
Hence the conclusion follows upon taking $\ell$ large enough.
\end{proof}

\section{Reblocking and fluctuation integral}
\label{app:reblocking}

\begin{proof}[Proof of Theorem~\ref{thm:general_RG_step_consistent_external_field}]
Throughout the proof, we write
\begin{equation}
  \varphi = \varphi' + \zeta
\end{equation}
with $\zeta \sim \Gamma_{j+1}$ and $\varphi',  \zeta$ independent,
and the fluctuation integral $\Eplus$ acts on the variable $\zeta$.
As explained in \cite[below \eqref{eq:Phi_j+1_definition}]{dgauss1}, we may assume that $E_j=0$ and $e_j=0$.
The first step is the reblocking
\begin{align}
  Z_j (\varphi, \Psi_j ; (\Psi_k)_{k<j} ) 
  =
  \sum_{X\in \mathcal{P}_{j}} e^{U_j (\Lambda \backslash X)} (K_j (X ; (\Psi_k)_{k<j} ) + \Psi_j (X) )
  =
  \sum_{X\in \mathcal{P}_{j+1}} e^{U_j (\Lambda \backslash X)} \bar{K}^{\Psi}_j (X)
\end{align}
where $\bar{K}^{\Psi}_j$ is defined in \eqref{eq:K_bar^Psi_definition}.
In the next step, $e^{-E_{j+1}|B| + 1_{0\in B} e_{j+1} + U_{j+1}}$ replaces $e^{U_j}$ using the identity
\begin{align}
e^{U_j (\Lambda \backslash X', \varphi)} = \prod_{B\in \mathcal{B}_{j+1} (\Lambda \backslash X')} \Big( \big( e^{U_j ( B, \varphi)} - e^{-E_{j+1}|B| + 1_{0\in B} e_{j+1} + U_{j+1} (B, \varphi')} \big) + e^{-E_{j+1} |B| + 1_{0\in B} e_{j+1}  +  U_{j+1} (B, \varphi')} \Big)  \nnb 
 = \sum_{Y \in \mathcal{P}_{j+1} (\Lambda \backslash X')} e^{-E_{j+1}|\Lambda \backslash (X'\cup Y)| + 1_{0\in \Lambda \backslash (X' \cup Y )} e_{j+1} + U_{j+1} (\Lambda \backslash (X'\cup Y), \varphi')}  \big( e^{U_j (\varphi)} - e^{E_{j+1}|B| + 1_{0\in B} e_{j+1}  +  U_{j+1} (\varphi')} \big)^{Y}
\end{align}
and similarly $\bar{K}^{\Psi}_j - \cE^{\Psi} K_j$ replaces $\bar{K}^{\Psi}_j$ (recall $\cE^{\Psi} K_j$ from \eqref{eq:cEK^Psi_definition}) using the identity
\begin{align}
\bar{K}^{\Psi}_j (X', \varphi) &= \prod_{Z' \in \operatorname{Comp}_{j+1} (X')} \big( \mathcal{E}^{\Psi} K_j (Z', \varphi') + ( \bar{K}^{\Psi}_j (Z', \varphi ) - \mathcal{E}^{\Psi} K_j (Z', \varphi') ) \big) \nnb 
&= \sum_{Z \in \mathcal{P}_{j+1} (X')}^{Z \not\sim X' \backslash Z} \mathcal{E}^{\Psi} K_j (\varphi')^{[Z]} (\bar{K}^{\Psi}_j (\varphi)  - \mathcal{E}^{\Psi} K_j (\varphi')  )^{[X' \backslash Z]} 
.
\end{align}
Using the specific form of $\cE^{\Psi} K_j$ given by \eqref{eq:cEK^Psi_definition} the last right-hand side can be rewritten as
\begin{equation}
  \mathcal{E}^{\Psi} K_j (\varphi')^{[Z]}
  = \sum_{(B_{Z''})_{Z''}} \prod_{Z''} J_j^{\Psi} (B, Z'')
\end{equation}
where the last sum $(B_{Z''})_{Z''}$ runs over collections of blocks $B_{Z''} \in \cB_{j+1} (Z'')$
and $Z'' \in \operatorname{Comp}_{j+1}(Z)$.
Rewriting $X'' = X'\cup Y$,
the expectation $\Eplus Z_j$ can now be written as
\begin{align}
  Z_{j+1}(\varphi' , 0 ; (\Psi_k)_{k\leq j})
  &=
    \Eplus Z_j (\varphi' + \zeta, \Psi_j ; (\Psi_k)_{k<j})
    \nnb
  &= e^{-E_{j+1} |\Lambda|}
    \Eplus \Bigg[
    \sum_{X'' \in \mathcal{P}_{j+1}} e^{1_{0\in \Lambda\backslash X''} e_{j+1} +  U_{j+1} (\Lambda \backslash X'')} e^{E_{j+1} |X''|}
    \nnb
  &\qquad\qquad\qquad \times \sum_{X'\subset X''} (e^{U_j} - e^{-E_{j+1} |B| + 1_{0\in B} e_{j+1} + U_{j+1}})^{X''\backslash X'} \\
  &\qquad\qquad\qquad   \times \sum_{Z \subset X'} (\bar{K}^{\Psi}_j - \mathcal{E}^{\Psi} K_j)^{[X' \backslash Z]} \sum_{(B_{Z''})} \prod_{Z'' \in \operatorname{Comp}_{j+1} (Z)} J^{\Psi}_j (B, Z'') \Bigg].
    \notag
\end{align}
The final result is obtained  after taking $e^{e_{j+1}}$ out and another resummation: we write
$X_0 = X'' \backslash X'$, $X_1 = X' \backslash Z$, $T=X_0 \cup X_1 \cup Z = X''$ and define
for $X = \cup_{Z''}B^*_{Z''} \cup X_0 \cup X_1$,
\begin{multline}
K_{j+1} (X, \varphi' ; (\Psi_k)_{k \leq j} ) = \sum_{X_0, X_1, Z, (B_{Z''})} e^{E_{j+1} |T| - 1_{0\in T} e_{j+1} } e^{U_{j+1} (X \backslash T)} \\
\times \Eplus \Big[ (e^{U_j} - e^{-E_{j+1} |B| + 1_{0\in B}e_{j+1} + U_{j+1}})^{X_0} (\bar{K}^{\Psi}_j - \mathcal{E}^{\Psi} K_j)^{[X_1]}  \Big] \prod_{Z'' \in \operatorname{Comp}_{j+1} (Z)} J^{\Psi}_j (B, Z'') . \label{eq:expression_for_K_j+1_in_appendix_external_field}
\end{multline}
Note that only $T\subset X$ contribute because, by definition of $\cE^{\Psi} K_{j}$, the whole expression vanishes when $Z \not\in \cS_{j+1}$.
Therefore
\begin{equation}
Z_{j+1} (\varphi' , 0 ; (\Psi_k)_{k\leq j} ) = e^{-E_{j+1} |\Lambda| + e_{j+1} } \sum_{Z \in \mathcal{P}_{j+1}} e^{U_{j+1} (\Lambda \backslash Z,\varphi')} K_{j+1} (Z, \varphi' ; (\Psi_k)_{k \leq j} )
\end{equation}
which is the desired form.  The factorisation property of $K_{j+1} (\cdot ; (\Psi_k)_{k\leq j})$ is inherited from that of $e^{U_j}$, $e^{U_{j+1}}$ and $K_j$.
\end{proof}

\section*{Acknowledgements}

R.B.\ was supported by the European Research Council under the European Union's Horizon 2020 research and innovation programme
(grant agreement No.~851682 SPINRG). He also acknowledges the hospitality of the Department of Mathematics at McGill University
where part of this work was carried out.
J.P.\ was supported by the Cambridge doctoral training centre Mathematics of Information.

\bibliography{all}
\bibliographystyle{plain}

\end{document}